\theoremstyle{plain}
\newtheorem{teo}{Theorem}[section]
\newtheorem{prop}[teo]{Proposition}
\newtheorem{cor}[teo]{Corollary}
\newtheorem{lem}[teo]{Lemma}
\theoremstyle{definition}
\newtheorem{defi}[teo]{Definition}
\theoremstyle{remark}
\newtheorem{rem}[teo]{Remark}
\newcommand{\z}  {\mathbf{Z}}
\newcommand{\ci} {\mathbf{C}}
\newcommand{\re} {\mathbf{R}}
\newcommand{\qu} {\mathbf{Q}}
\newcommand{\letr}[1] {\mathcal{1}}
\newcommand{\f } {\mathbf{F}}
\DeclareMathOperator{\coker}{coker}
\DeclareMathOperator{\gl}{GL}
\DeclareMathOperator{\enn}{End}
\DeclareMathOperator{\ho}{Hom}
\DeclareMathOperator{\trace}{tr}
\def\A{\mathbf{A}}
\begin{document}

\title{On certain cusp forms on a definite quaternion}
\author{Tommaso Giorgio Centeleghe}
\address{Universit\"at Heidelberg\\
IWR, Im Neuenheimer Feld 368\\
69120 Heidelberg, Germany\\}
\email{tommaso.centeleghe@iwr.uni-heidelberg.de }

\maketitle

\begin{abstract} If $D$ is the definite quaternion over $\qu$ of discriminant $p$, we compute,
for any prime $p>3$, the number of infinite dimensional cusp forms on $D^*$ which are trivial
at infinity, tamely ramified at $p$, and have given conductor $N$ away from $p$. We include
a detail explanation of a Deuring--type correspondence between supersingular elliptic curves
in characteristic $p$ and a certain double coset arising from the adelic points of $D^*$.
\end{abstract}

\section*{Introduction}

Let $p$ be a prime number, and $D$ the quaternion algebra over $\qu$ ramified
precisely at $p$ and infinity. Let $\pi$ be an irreducible, unitary, automorphic representation
of the multiplicative group $D^*$ of $D$.
For a prime number $\ell$, let $\pi_\ell$ be the local component of $\pi$ at $\ell$, and let
$\pi_\infty$ be the component of $\pi$ at the archimedean
place of $\qu$. The representations $\pi_p$ and
$\pi_\infty$, of the respective local groups ${(D\otimes\qu_p)}^*$ and ${(D\otimes\re)}^*$, are both
finite dimensional. In this paper we will only be concerned with those $\pi$ so that

i) $\pi_\infty$ is the trivial, one--dimensional representation;

ii) $\pi_p$ has a nonzero vector fixed by the maximal pro--$p$ subgroup of $D^*_p$.
\\Condition ii) can be interpreted as a tame ramification requirement for $\pi$ at $p$.
For a given integer $N\geq 1$ not divisible by $p$, let $\mathcal{A}(p,N)$
denote the set of isomorphism classes of automorphic representations $\pi$ of $D^*$ of the type
described above and whose prime--to--$p$ conductor is equal to $N$. In this paper we obtain an
explicit formula for the cardinality $A(p,N)$ of $\mathcal{A}(p,N)$, and of some distinguished subsets (cf. Thm. \ref{MT}).
For simplicity $p$ is assumed to be $>3$.

Associated to {\it any} irreducible, unitary, automorphic representation $\pi$ of $D^*$ there is a
system of Hecke eigenvalues $\Phi_\pi={(a_\ell)}_{\ell\nmid pN}$, where $\ell$ is a prime number not
dividing $pN$, and $N$ is an integer divisible by the prime--to--$p$ part of the conductor of $\pi$.
When the central character $\chi_\pi$ of $\pi$ has finite order (always the case if $\pi$ is trivial at
infinity), then it is known that the $a_\ell$ are algebraic integers of a certain number field $K(\pi)$,
and therefore, once a prime of $\bar\qu$ above $p$ is chosen, can be reduced mod $p$
to obtain $\overline\Phi(\pi)={(\bar a_\ell)}_{\ell\nmid pN}$, a system of Hecke eigenvalues mod $p$.

The tameness condition imposed to the automorphic forms $\pi$ is natural when studying the reduction mod $p$
of the totality of the systems of eigenvalues arising from cusp forms on $D^*$ trivial at infinity. A theorem of Serre relates
these mod $p$ systems of eigenvalues to those arising from mod $p$ modular forms. In a forthcoming work, the author
will adopt Serre's quaternionic viewpoint to estimate from above, using the formulas of this paper, the number of mod $p$
Hecke eigenforms of given conductor.

The author would like to express his gratitude to professor Chandrashekhar Khare for suggesting to him
to perform these ``quaternionic counts", as well as for the assistance he has received. This work benefitted
from many discussions with professors Gordan Savin and Gebhard B\"ockle. The author heartily thanks
them. In particular, it was professor B\"ockle who suggested to the author that it could have been possible
to find an exact formula for $A(p,N)$.

\section{Statement of the Theorem}

\subsection{Generalities}

Let $p$ be a prime number, and $D$ the quaternion algebra over $\qu$ ramified precisely at $p$
and infinity. The multiplicative group $D^*$ of $D$ is an algebraic group over $\qu$ that can be realized
as a closed subgroup of $\gl_4$. For any commutative $\qu$--algebra $K$, the group of $K$--valued points of
$D^*$ is denoted by $D^*_K$. If $K$ is also a topological algebra, then the embedding
$D^*_K\subset\gl_4(K)$ makes $D^*_K$ a topological group. If $\nu\in\Sigma_\qu$ is any place of $\qu$, and $\qu_\nu$ is
the corresponding completion, we will write $D^*_\nu$ for $D^*_{\qu_\nu}$; when no confusion can arise
we may simply write $D^*$ for $D^*_\qu$. A finite place $\nu\in\Sigma_\qu$ will often be denoted by
its residual characteristic $\ell$.

The group $D^*_\qu$ sits inside the adelic group $D^*_\A$ as a discrete subgroup,
the homogeneous space $X=D^*_\qu\backslash D^*_\A$ is locally compact and admits a measure $dx$ that
is invariant under the right translation action of $D^*_\A$. The center $Z$ of $D^*$, viewed as a closed,
algebraic subgroup, is isomorphic to the multiplicative group $\qu^*$, if $\psi:Z_\A\rightarrow\ci^*$ is
a continuous Hecke character of finite order then the space $L^2(X,\psi)$ is defined to be the space
of measurable functions $\varphi:D^*_\A\rightarrow\ci$ such that:

i) $\varphi(\gamma g)=\varphi(g)$, for all $\gamma\in D^*_\qu$, $g\in D^*_\A$;

ii) $\varphi(gz)=\psi(z)\varphi(g)$, for all $z\in Z_\A$, $g\in D^*_\A$;

iii) $\int_{X/Z_\A}{|f(x)|}^2dx<\infty$.
\\The space $L^2(X,\psi)$ is a Hilbert space on which $D^*_\A$ acts unitarily by right translation. The following
fundamental theorem is proved in \cite{GG}:

\begin{teo} The unitary representation $\rho_\psi$ of $D^*_\A$ on $L^2(X,\psi)$ decomposes uniquely as the Hilbert
direct sum
$$\rho_\psi=\bigoplus_{\pi\in I_\psi}' m(\pi)\pi$$
of a countable collection $I_\psi$ of irreducible, pairwise non--isomorphic, subrepresentations, each appearing
with finite multiplicity.
\end{teo}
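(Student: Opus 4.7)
The plan rests on a single key geometric input: since $D$ is a \emph{definite} quaternion algebra, $D^*$ is anisotropic modulo its centre over $\qu$, and a classical theorem of Borel--Harish-Chandra then implies that the quotient $Z_\A D^*_\qu \backslash D^*_\A$ is compact. Up to twisting by $\psi$, the space $L^2(X,\psi)$ can therefore be identified with the $L^2$--sections of a complex line bundle over this compact quotient, which is the decisive property distinguishing the present situation from the case of an isotropic $\qu$--group.

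Granted this, I would follow the standard Gelfand--Piatetski-Shapiro route. For a compactly supported continuous function $f$ on $D^*_\A$ transforming appropriately under $Z_\A$, the right--convolution operator $R_\psi(f)$ on $L^2(X,\psi)$ is realised as an integral operator with kernel
$$
K_f(x,y) \;=\; \sum_{\gamma \in Z_\qu \backslash D^*_\qu} f(x^{-1}\gamma y),
$$
a sum which is finite at each point because $f$ has compact support and $D^*_\qu$ is discrete in $D^*_\A$. Compactness of the quotient modulo $Z_\A$ forces $K_f$ to be continuous and bounded on $(Z_\A D^*_\qu \backslash D^*_\A)^2$, so $R_\psi(f)$ is Hilbert--Schmidt, and in particular compact. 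Applying the spectral theorem to suitable self--adjoint $R_\psi(f)$ produces a decomposition of $L^2(X,\psi)$ into finite--dimensional joint eigenspaces; letting $f$ range over an approximate identity within a commutative family of such convolution operators (say, bi--invariant under a fixed small compact open subgroup of $D^*_\A$), one assembles the resulting eigenspaces into closed, $D^*_\A$--stable, irreducible subrepresentations occurring with finite multiplicity.

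The main obstacle, and the only nontrivial step, is the Hilbert--Schmidt claim for $R_\psi(f)$: this is precisely where definiteness of $D$ is used, since over an isotropic $\qu$--group such as $\gl_2$ the quotient fails to be compact and one must first carve out the cuspidal part before any analogous spectral argument can apply. For the definite quaternion algebra, by contrast, the compactness of $Z_\A D^*_\qu \backslash D^*_\A$ bypasses these complications, and the theorem reduces to a direct generalisation of the Peter--Weyl theorem for compact groups. Uniqueness of the isotypic decomposition and countability of the index set $I_\psi$ are then formal consequences of Schur's lemma and the separability of $L^2(X,\psi)$.
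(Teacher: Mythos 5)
Your proposal is correct: the paper itself gives no proof of this theorem but simply cites Gelfand--Graev--Piatetski-Shapiro \cite{GG}, and your argument --- compactness of $Z_\A D^*_\qu\backslash D^*_\A$ from anisotropy of $D^*$ modulo centre, the Hilbert--Schmidt property of the convolution operators $R_\psi(f)$ via the continuous kernel $\sum_{\gamma\in Z_\qu\backslash D^*_\qu}f(x^{-1}\gamma y)$, and the spectral theorem for compact self-adjoint operators combined with an approximate identity --- is precisely the standard argument of that reference. Nothing further is needed.
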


The next result is due to Jacquet--Langlands (cf. \cite{JL}, Prop. 11.1.1):

\begin{teo} The multiplicity $m(\pi)$ of each constituent $\pi$ appearing in $\rho_\psi$ is one.
\end{teo}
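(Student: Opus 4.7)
The plan is to deduce this multiplicity statement from multiplicity one for $\gl_2$ via the global Jacquet--Langlands correspondence established in \cite{JL}. To each $\pi$ appearing in $\rho_\psi$ one attaches an irreducible cuspidal automorphic representation $\pi'$ of $\gl_2(\A)$ with central character $\psi$: at every finite place $\ell\neq p$ the component $\pi'_\ell$ coincides with $\pi_\ell$ via the local isomorphism $D^*_\ell\simeq\gl_2(\qu_\ell)$, whereas at the ramified places $p$ and $\infty$ the components $\pi'_p$ and $\pi'_\infty$ are the essentially square--integrable representations of $\gl_2(\qu_p)$ and $\gl_2(\re)$ determined from the finite--dimensional $\pi_p$ and $\pi_\infty$ by the local Jacquet--Langlands character identities.

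The correspondence is constructed through a trace formula comparison. Since $D$ is definite, the quotient $D^*_\qu\backslash D^*_\A/Z_\A$ is compact and the trace formula for $\rho_\psi$ reduces to a discrete spectral identity
\[
\sum_{\pi\in I_\psi} m(\pi)\,\mathrm{tr}\,\pi(f) \;=\; \sum_{\gamma} \mathrm{vol}(G_\gamma)\, O_\gamma(f),
\]
summed over rational conjugacy classes. Matching $f\in C^\infty_c(D^*_\A)$ with $f'\in C^\infty_c(\gl_2(\A))$ by prescribing equality of regular semisimple orbital integrals, and subtracting from the $\gl_2$ trace formula the Eisenstein and one--dimensional contributions, equates the two spectral sums. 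Linear independence of characters at sufficiently many split places then isolates a single $\pi$ and yields $m(\pi)=m_{\mathrm{cusp}}(\pi')$.

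Finally one invokes Shalika's multiplicity one theorem for $\gl_2$: uniqueness up to scalar of the local Whittaker functional on each irreducible admissible $\pi'_v$ globalizes to an injective intertwiner of the $\pi'$--isotypic subspace of $L^2_{\mathrm{cusp}}$ into a space of Whittaker functions on $N(\qu)\backslash\gl_2(\A)$, which forces that isotypic subspace to be irreducible. The principal technical obstacle is the matching of orbital integrals at the ramified places: one must verify that the character of the finite--dimensional $\pi_p$ (resp.\ $\pi_\infty$) on regular semisimple elements of $D^*_p$ (resp.\ $D^*_\infty$) transfers under the reduced norm to the Harish--Chandra character of the associated square--integrable representation of $\gl_2(\qu_p)$ (resp.\ $\gl_2(\re)$); these are the local Jacquet--Langlands character identities that constitute the technical heart of \cite{JL}.
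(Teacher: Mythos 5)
The paper does not actually prove this statement --- it is quoted as a theorem of Jacquet--Langlands with a reference to \cite{JL} --- and your outline is essentially the argument of that cited source: the compact (Selberg) trace formula for $D^*$, comparison with the $\gl_2$ trace formula through matching of orbital integrals and the local character identities at the ramified places, isolation of a single representation by linear independence of Hecke characters at the split places, and reduction to multiplicity one for $\gl_2$ via uniqueness of Whittaker models. So you are taking the same route as the reference, and the structure of the argument is sound. (A minor attribution point: for $\gl_2$ the Whittaker--model proof of multiplicity one is already in \cite{JL}; Shalika's theorem is the $\gl_n$ generalization.)

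One gap worth closing: you attach a \emph{cuspidal} $\pi'$ on $\gl_2$ to ``each $\pi$ appearing in $\rho_\psi$,'' but this fails for the finite--dimensional constituents. Since $D$ is definite, the finite--dimensional constituents are exactly the characters $\chi\circ\mathrm{Nr}$ (by strong approximation for the norm--one subgroup), and under the global correspondence these match residual, not cuspidal, representations of $\gl_2(\A)$, so the Whittaker argument does not apply to them. This case must be treated separately, but it is immediate: the $\chi\circc$-- \emph{(correction:)} the $(\chi\circ\mathrm{Nr})$--isotypic subspace of $L^2(X,\psi)$ consists of functions factoring through the reduced norm and transforming by $\chi$, hence is spanned by $\chi\circ\mathrm{Nr}$ itself and is one--dimensional. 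With that case split off, your reduction of the infinite--dimensional constituents to multiplicity one for $\gl_2$ is the standard and correct argument.
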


Any irreducible representation $\pi$ occurring in the above decomposition of $\rho_\psi$, for some $\psi$, will be
referred to as a {\it cusp form}.

In virtue of the tensor product Theorem (cf. \cite{GG}), any cusp form $\pi$ is isomorphic to a restricted tensor product of a unique
family ${(\pi_\nu)}_{\nu\in\Sigma_\qu}$ of irreducible, unitary representations of the local groups $D^*_\nu$. For any prime
number $\ell\neq p$, there is an isomorphism $D^*_\ell\simeq\gl_2(\qu_\ell)$, and for almost all $\ell$ the local
component $\pi_\ell$ of $\pi$ defines a representation of $\gl_2(\qu_\ell)$ that is unramified. If $\pi$
is infinite dimensional, then it follows from the approximation theorem that for every prime $\nu$ that is unramified in
$D$ the local component $\pi_\nu$ is also infinite dimensional. On the other hand, $\pi_\nu$ is finite dimensional
when $\nu$ is equal to $p$ or $\infty$.

If $\pi$ is infinite dimensional and $\ell\neq p$, then the conductor $c(\pi_\ell)$ is defined (cf. \cite{Del}, \S $2.2$, or \cite{Ca}), and it will be thought of as a
positive integer given by the appropriate power of $\ell$, instead that of an ideal of $\z$. By definition, the prime--to--$p$
conductor $N(\pi)$ of $\pi$ is
$$N(\pi)=\prod_{\ell\neq p}c(\pi_\ell),$$
it is a well defined integer since $\pi_\ell$ is unramified for almost all $\ell$.

We will be concerned only with those {\it infinite} dimensional cusp forms $\pi$ such that

i) $\pi_\infty$ is the trivial, one--dimensional representation;

ii) $\pi_p$ has a nonzero vector fixed by the maximal pro--$p$ subgroup of $D^*_p$.
\\In the next section we digress on the consequences of condition ii)
on the nature of the unitary, local representations $\pi_p$ that may occur in the decomposition of $\pi$.

\subsection{The local nature at $p$}\label{lnp}

The algebra $D_p=D\otimes\qu_p$ is a quaternion, division algebra over $\qu_p$, it is unique up to isomorphism.
On $D_p$ a valuation function is defined and the corresponding valuation ring $O_p$ is the unique maximal,
compact subring. It consists of all the elements that are integral over $\z_p$. There is a unique maximal, two--sided ideal
$\mathfrak{m}$ of $O_p$, which is principal and generated by any {\it uniformizer} $\varpi\in\mathfrak{m}$. The residue
field $k$ is a degree $2$ extension of $\f_p$, conjugation by any uniformizer $x\rightarrow \omega x\omega^{-1}$ preserves
$O_p$ and $\mathfrak{m}$, and induces the Frobenius automorphism on $k$. The maximal pro--$p$ subgroup of $O_p^*$
is the left term of the exact sequence $$1\rightarrow 1+\mathfrak{m}\longrightarrow O_p^*\longrightarrow k^*\rightarrow 1,$$
and will be denoted by $O_p^*(1)$. 

Let now $\pi_p$ be an irreducible, unitary representation of $D_p^*$ on a finite dimensional complex vector space $V$.
The subspace $V'$ of elements fixed by $O_p^*(1)$ is stable under $D^*_p$. It follows that if $\pi_p$ satisfies
ii) then, by its irreducibility, we have $V'=V$. The group $k^*$ acts on $V'=V$ and there is a decomposition
$$V=\bigoplus_{\eta\in\hat k^*}V^{\eta}$$
into isotypical components indexed by complex valued characters of $k^*$. For every such $\eta$, the action of a uniformizer $\varpi$
on $V$ induces an isomorphism of the summand $V^{\eta}$ with $V^{\eta^p}$, its Frobenius conjugate. The irreducibility of
$\pi_p$ forces the following two possibilities:

1) $\dim V=1$ and $V=V^{\eta}$, for a character $\eta$ {\it equal} to its Frobenius conjugate $\eta^p$;

2) $\dim V=2$, and $V=V^{\eta}\oplus V^{\eta^p}$, with $\dim V^{\eta}=\dim V^{\eta^p}=1$, for a pair of {\it distinct}
characters $(\eta,\eta^p)$.

In the first case $\pi_p$ is abelian and the character describing the action factors
through the reduced norm map $N_p:D_p^*\rightarrow\qu_p^*$ and can therefore be identified with
a certain character $\epsilon_\eta$ of $\qu_p^*$ that is tamely ramified, i.e., it is trivial
on the units $\z_p^*(1)$ that are congruent to $1$ mod $p$.

In the second case, if $\varpi\in D$ is the uniformizer such that $\varpi^2=p$, then $\varpi$ acts on $V$ interchanging the two
lines $V^{\eta}$ and $V^{\eta^p}$. Its square acts as multiplication by $\chi_{\pi_p}(p)$, where $\chi_{\pi_p}$ is
the central character of $\pi_p$.

\subsection{Statement of the Theorem}\label{statementteo}

For any integer $N\geq 1$ not divisible by $p$, we let $\mathcal{A}(p,N)$ denote the set of
infinite dimensional, unitary cusp forms $\pi$ on $D^*$ occurring in $L^2(X,\psi)$, for some
finite order character $\psi$, satisfying i) and ii), and such that $N(\pi)=N$.

Let $\mathcal{A}_1(p,N)$ (resp. $\mathcal{A}_2(p,N)$) be the subset of $\mathcal{A}(p,N)$
given by those cusp forms $\pi$ for which $\pi_p$ is one dimensional (resp. two dimensional),
and denote its cardinality by $A_1(p,N)$ (resp. $A_2(p,N)$). Moreover let $\mathcal{A}_0(p,N)$ be the
subset of $\mathcal{A}_1(p,N)$ given by the cusp forms $\pi$ so that $\pi_p$ is trivial on $O_p^*$, i.e., in
the notation used in subsection \ref{lnp}, so that the associated character $\epsilon_\eta:\qu_p^*\rightarrow\ci^*$
is {\it unramified}. Let $A_0(p,N)$ denote be its cardinality. Theorem \ref{MT} below provides, for $p>3$,
closed formulas for $A_0(p,N)$, $A_1(p,N)$ and $A_2(p,N)$.

For an integer $N\geq 1$ not divisible by $p$, let $\Delta(p,N)$ be the function defined by the following table.
Its value depends on $N$ and on the congruence class of $p$ modulo $12$.

\begin{tabular}{c|cccc}
$p$ mod $12$&$1$&$5$&7&$11$\\
\hline
$N=1$&$\dfrac{{(p-1)}^2}{24}$&$\dfrac{(p-1)(p+15)}{24}$&$\dfrac{(p-1)(p+11)}{24}$&$\dfrac{(p-1)(p+27)}{24}$\\
$N=2$&$\dfrac{{(p-1)}^2}{8}$&$\dfrac{{(p-1)}^2}{8}$&$\dfrac{(p-1)(p+3)}{8}$&$\dfrac{(p-1)(p+3)}{8}$\\
$N=3$&$0$&$\dfrac{2(p-1)}{3}$&$0$&$\dfrac{2(p-1)}{3}$\\
$N>3$&0&0&0&0\\
\end{tabular}

If $f,g:\z_{>0}\rightarrow\qu$ are functions defined over the positive integer and valued in $\qu$,
then $(f*g)$ will denote their convolution. 
We say that $f$ is multiplicative if $f(1)=1$ and $f(mn)=f(m)f(n)$ for all pairs of coprime positive integers
$m$ and $n$. The convolution product is associative, commutative and $(f*g)$ is multiplicative
when $f$ and $g$ are. The M\"obius function $\mu$ is the multiplicative function vanishing on
integers that are not square free, and taking value $-1$ on every prime. For a prime $\ell$, and an
integer $n\geq 1$, let $r$ be multiplicative function defined by

$r(\ell^n)=\left\{\begin{matrix}
\ell^2-3&{\rm if }\hphantom{x}n=1;\\
\ell^4-3\ell^2+3&{\rm if }\hphantom{x}n=2;\\
\ell^{2(n-3)}{(\ell^2-1)}^3&{\rm if }\hphantom{x}n>2;\\
\end{matrix}\right.$

\begin{teo}\label{MT} Let $p>3$ be a prime number. Then

$A_0(p,N)=\dfrac{r(N)(p-1)}{24}+\dfrac{(\Delta(p,\_)*\mu*\mu)(N)}{p-1}-\mu(N);$

$A_1(p,N)=\dfrac{r(N){(p-1)}^2}{24}+(\Delta(p,\_)*\mu*\mu)(N)-(p-1)\mu(N);$

$A_2(p,N)=\dfrac{r(N){p(p-1)}^2}{48}-\dfrac{(\Delta(p,\_)*\mu*\mu)(N)}{2}.$
\end{teo}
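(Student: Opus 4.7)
The plan is to (i) realise the sets $\mathcal{A}_i(p,N)$ inside a finite-dimensional space of automorphic functions on a class set, split by the local type at $p$; (ii) compute the relevant dimensions by an Eichler--type mass formula; (iii) extract $A_i(p,N)$ by Möbius inversion.

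For every $M\geq 1$ coprime to $p$, let $U(M)\subset D^*_{\A}$ (finite part) be the compact open subgroup equal to $\gl_2(\z_\ell)$ at primes $\ell\nmid pM$, to the unit group of an Eichler order of level $\ell^{v_\ell(M)}$ at primes $\ell\mid M$, and to the pro-$p$ subgroup $O_p^*(1)$ at $p$. Since $D$ is definite, the space $W(M)$ of right $U(M)$-invariant functions $D^*\backslash D^*_\A\to\ci$ trivial at $\infty$ is finite dimensional, with dimension equal to the class number of $U(M)$. Combining Theorems 1.1 and 1.2 with Casselman's newform theorem at every $\ell\neq p$ gives
$$W(M)=\bigoplus_\pi\pi^{U(M)}\;\oplus\;(\text{one-dim.\ automorphic part}),$$
with $\dim\pi^{U(M)}=d(M/N(\pi))\cdot\dim\pi_p^{O_p^*(1)}$, where $d$ is the divisor function and the $p$-factor equals $1$ or $2$ by the dichotomy of Section \ref{lnp}. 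The residual action of $O_p^*/O_p^*(1)\simeq k^*$ then splits $W(M)=W^{(0)}(M)\oplus W^{(1)}(M)\oplus W^{(2)}(M)$, indexed respectively by the trivial $k^*$-character, the Frobenius-fixed non-trivial characters, and the length-$2$ Frobenius orbits. These three summands collect the contributions of cusp forms counted by $A_0$, $A_1-A_0$ and $A_2$, with $W^{(2)}$ carrying an overall factor of $2$ because each $2$-dimensional $\pi_p$ appears with both characters of its Frobenius orbit.

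The second step is an Eichler-type mass formula adapted to the tame level at $p$: $\dim W(M)$ is a sum over double-coset representatives weighted by the reciprocal of the order of the corresponding unit group modulo $\pm 1$. The dominant volume term produces $r(M)(p-1)/24$ (the factor $p-1$ from the tame reduction at $p$ and $r(M)$ from the Eichler local factors at primes dividing $M$), while torsion units of order $3$, $4$ or $6$, which exist only for $M\leq 3$, contribute the entries $\Delta(p,M)$ of the table. Substituting the resulting closed formulas for $\dim W^{(i)}(M)$ into
\begin{align*}
W^{(0)}(M) &= (A_0(p,\_)\ast d)(M) + (\text{1-dim.\ correction}),\\
W^{(0)}(M)+W^{(1)}(M) &= (A_1(p,\_)\ast d)(M) + (\text{1-dim.\ correction}),\\
W^{(2)}(M) &= 2\,(A_2(p,\_)\ast d)(M),
\end{align*}
and inverting by $\mu\ast\mu$ (the Dirichlet inverse of $d=1\ast 1$) yields the stated formulas, with $(\Delta\ast\mu\ast\mu)(N)$ arising from the torsion corrections and $\mu(N)$ from the $1$-dim automorphic contributions.

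The main obstacle is the mass-formula computation producing $\Delta(p,M)$. Following the Deuring-type correspondence flagged in the abstract, the double cosets for $U(M)$ parametrise supersingular elliptic curves over $\fbar$ with a level-$M$ structure, and the torsion units of orders $3$, $4$, $6$ correspond to the extra automorphisms of the special supersingular curves $j=0$ and $j=1728$. The case analysis of when these two curves are supersingular (governed by $p\bmod 12$), together with how their automorphism groups act on small-level structures, is exactly what fills in the four-column, three-row table defining $\Delta(p,N)$, and cleanly explains why the correction is supported on $M\leq 3$.
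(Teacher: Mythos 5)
Your architecture is exactly the paper's: tame level $O_p^*(1)$ at $p$, Casselman's local theory giving $\dim\pi^{U(M)}=t(M/N(\pi))\cdot\dim\pi_p$ (the paper's Lemma \ref{locallem}), a three--way splitting of the space of invariant functions by the residual $k^*$--action at $p$ matching $A_0$, $A_1$, $A_2$ with the Eisenstein corrections $1$ and $p-1$ and the factor $2$ on the two--dimensional part (Proposition \ref{recrel}), a supersingular orbit count whose extra--automorphism terms produce $\Delta$, and inversion by $\mu*\mu$. Two points, however, do not work as written.

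First, your level structure away from $p$ is wrong. You take unit groups of Eichler orders, i.e.\ $\Gamma_0(\ell^{v_\ell(M)})$--type subgroups, but $\mathcal{A}(p,N)$ contains forms whose central character is ramified at primes dividing $N$; such a $\pi_\ell$ has no nonzero Eichler--order--fixed vector, so your $W(M)$ misses these forms entirely. Moreover the local index of $\Gamma_0(\ell^n)$ is $\ell^{n-1}(\ell+1)$, whereas the main term of the answer requires the $\Gamma_1$--index: one needs $(t*r)(N)=\psi(N)=N^2\prod_{\ell\mid N}(1-\ell^{-2})$, which is where $r$ comes from after inversion. The paper uses the $\Gamma_1(N)$--type subgroups $U_\ell(N)$ (lower row $\equiv(0,1)$ mod $N$); with that choice the Casselman dimension formula you quote is the correct one, so the repair is only to change the level structure --- but as stated your mass computation would not yield the claimed formulas. (Relatedly, the main term of $\dim W(M)$ itself is $\psi(M)(p-1)/24$, not $r(M)(p-1)/24$; the $r$ appears only after convolving with $\mu*\mu$.)

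Second, a weighted mass formula gives $\dim W(M)$, but your recursion needs the three dimensions $\dim W^{(i)}(M)$ separately, i.e.\ the isomorphism class of $W(M)$ as a $k^*$--representation, not just its dimension. Concretely, for each supersingular curve with extra automorphisms one must know how $W_E$ embeds into $k^*$ (the paper's Lemma \ref{wek} and Remark \ref{injects}) and how $W_E$ acts on the points of order $N$ (Lemma \ref{ei}); each orbit then contributes ${\rm Ind}_{C_{e_i}}^{k^*}1$, and reading off the multiplicities of the trivial character, of the $p-1$ Frobenius--fixed characters, and of the rest is what produces the three different $\Delta$--coefficients $\Delta/(p-1)$, $+\Delta$, $-\Delta$ in $u_0,u_1,u_2$ (Proposition \ref{explicit}). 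Your proposal asserts the outcome of this step but does not indicate how to perform it, and it is precisely where the case analysis behind the table for $\Delta$ lives.
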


The convolution products in the theorem are performed with respect to the second variable of $\Delta$.

\begin{cor} Let $p>3$ be a prime number, and $N\geq 1$ an integer not divisible by $p$. The dimension
of the space of cuspidal newforms of weight $2$ on the group $\Gamma_1(pN)$, with trivial character
locally at $p$ is equal to
$$A_0(p,N)=\dfrac{r(N)(p-1)}{24}+\dfrac{(\Delta(p,\_)*\mu*\mu)(N)}{p-1}-\mu(N).$$
\end{cor}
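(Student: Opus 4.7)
The plan is to deduce the corollary from Theorem \ref{MT} via the Jacquet--Langlands correspondence, which identifies $A_0(p,N)$ with the dimension of the space in question.

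Since $D$ is the definite quaternion algebra ramified exactly at $p$ and $\infty$, Jacquet--Langlands provides a bijection between the infinite dimensional cuspidal automorphic representations $\pi$ of $D^*$ and the cuspidal automorphic representations $\sigma$ of $\gl_2(\A_\qu)$ whose local components at $p$ and $\infty$ are essentially square integrable. At every prime $\ell\neq p$ one has $\pi_\ell\simeq\sigma_\ell$ under the canonical isomorphism $D^*_\ell\simeq\gl_2(\qu_\ell)$, so the prime--to--$p$ conductors coincide. Condition i), that $\pi_\infty$ is trivial, corresponds to $\sigma_\infty$ being the holomorphic discrete series of weight $2$, making $\sigma$ a classical cusp form of weight $2$.

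The key step is the translation of the local condition at $p$. By Section \ref{lnp}, an element $\pi\in\mathcal{A}_0(p,N)$ has $\pi_p=\epsilon_\eta\circ N_p$ with $\epsilon_\eta:\qu_p^*\fd\ci^*$ \emph{unramified} of finite order. Local Jacquet--Langlands sends such a $\pi_p$ to $\sigma_p=\mathrm{St}\otimes(\epsilon_\eta\circ\det)$, an unramified twist of the Steinberg representation, which has conductor exactly $p$ and unramified central character $\epsilon_\eta^2$. In classical terms, $\sigma$ corresponds to a weight $2$ newform of level exactly $pN$ whose nebentypus is trivial on the factor ${(\z/p\z)}^*$ of ${(\z/pN\z)}^*$. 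Conversely, a weight $2$ newform on $\Gamma_1(pN)$ with trivial character locally at $p$ has $\sigma_p$ of conductor $p$ with unramified central character, hence an unramified twist of Steinberg, so it comes via Jacquet--Langlands from an element of $\mathcal{A}_0(p,N)$.

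Combining the bijection with multiplicity one (on the quaternionic side by the Jacquet--Langlands theorem recalled above, on the $\gl_2$ side by strong multiplicity one) yields that $A_0(p,N)$ equals the dimension of the stated space of newforms, and the formula is then precisely the first identity of Theorem \ref{MT}. The one point requiring care is the local matching at $p$: one must use the twist formulas for the conductor and the central character of Steinberg representations to confirm that ``$\epsilon_\eta$ unramified'' corresponds exactly to ``conductor $p$ together with trivial character at $p$'' on the classical side. Beyond this local bookkeeping, the corollary is a routine translation.
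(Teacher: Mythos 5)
Your proposal is correct and follows essentially the same route as the paper: apply the global Jacquet--Langlands correspondence, match the archimedean component with weight $2$, and match the local condition at $p$ (one--dimensional $\pi_p$ with unramified $\epsilon_\eta$ corresponding to an unramified twist of Steinberg, hence conductor exactly $p$ with trivial character at $p$), together with the converse observation that conductor $pN$ and unramified central character at $p$ force Steinberg type. The extra local bookkeeping you spell out is exactly what the paper leaves implicit.
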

\begin{proof} From the Jacquet--Langlands global correspondence (cf. \cite{JL}, or \cite{Ro}) it follows that cusp forms
of $\mathcal{A}_0(p,N)$ are in bijection with cusp forms $\pi$ on $\gl_2$ of conductor $pN$, with
archimedean component $\pi_\infty$ isomorphic to the principal series of lowest weight $2$,
and such that $\pi_p$ is of Steinberg type at $p$ with unramified central character.

Since any form of conductor $pN$, and with unramified central character at $p$ has to be
of Steinberg type at $p$, the formula follows.
\end{proof}

\section{Automorphic forms on $D^*$}\label{autom}

We introduce here certain spaces $S(1,N)$ of automorphic forms that intervene in the study of the cusp forms introduced
in the previous section. They consist of locally constant functions on $D^*_\qu\backslash D^*_\A$ and they are independent on the
archimedean variable $g_\infty\in D^*_\infty$.

Let $R$ be a maximal order of $D$, for a prime number $\ell$ we set

$R_\ell=R\otimes\z_\ell$;

$D_\ell=D\otimes\qu_\ell=R_\ell\otimes\qu$;
\\where all tensor products are taken over $\z$. If $\ell\neq p$ the ring $D_\ell$ is isomorphic to the algebra $\textrm{M}_2(\qu_\ell)$
of two--by--two matrices with entries in $\qu_\ell$, and we fix an identification $D_\ell\simeq\textrm{M}_2(\qu_\ell)$ such that
$R_\ell$ corresponds to the standard maximal $\z_\ell$--order $\textrm{M}_2(\z_\ell)$.

Let $N$ be any integer $\geq 1$ not divisible by $p$, using the identification $R^*_\ell\simeq\textrm{GL}_2(\z_\ell)$ we define a
congruence subgroup $U_\ell(N)$ of $R^*_\ell$ as follows
$$U_\ell(N)=\left\{x\in\textrm{GL}_2(\z_\ell)|x\equiv
\begin{pmatrix}
*&*\\
0&1\\
\end{pmatrix}\textrm{mod}\hphantom{x}\textrm{N}\right\}.$$
We have that $U_\ell=\textrm{GL}_2(\z_\ell)$ if and only if $\ell$ does not divide $N$. The normalizer $U'_\ell(N)$ of
$U_\ell(N)$ in $R^*_\ell$ is
$$U'_\ell(N)=\left\{x\in\textrm{GL}_2(\z_\ell)|x\equiv
\begin{pmatrix}
*&*\\
0&*\\
\end{pmatrix}\textrm{mod}\hphantom{x}\textrm{N}\right\},$$
the quotient $U_\ell(N)/U'_\ell(N)$ is isomorphic to ${(\z_\ell/N\z_\ell)}^*$.

The ring $R_p$ is the unique maximal compact subring of $D_p$, its residue field will be denoted by $k$, and
the maximal pro--$p$ subgroup of its multiplicative group by $R_p^*(1)$ (cf. section \ref{lnp}).

Let $U(1,N)$ be the open subgroup of $D^*_\A$ defined by
$$U(1,N)=\prod_{\ell\neq p}U_\ell(N)\times R_p^*(1)\times D^*_\infty,$$
and set
\begin{equation}\label{doublecoset}\Omega(1,N)=D^*_\qu\backslash D^*_\A/U(1,N).
\end{equation}
This double coset is finite and discrete, since the space $D^*_\qu\backslash D^*_\A/D^*_\infty$ is compact.

\begin{defi} The space $S(1,N)$ of automorphic forms on $D^*$ of level $(1,N)$ and trivial at infinity is
the space of functions $f:\Omega(1,N)\rightarrow\ci$.
\end{defi}

Elements of $S(1,N)$ are therefore complex valued functions on $D^*_\A$ that are invariant under
translation by the discrete subgroup $D^*_\qu$ to the left, and under translation by the open subgroup
$U(1,N)$ to the right.

There are several operators on $S(1,N)$ that we now describe. For any prime $\ell\neq p$ the $\ell$--th
Hecke operator $T_\ell$ is defined as follow. The double coset
$U_\ell(N)\begin{pmatrix}
\ell&0\\
0&1\\
\end{pmatrix}U_\ell(N)$, considered as a subset of $\gl_2(\qu_\ell)$, is the finite
union of disjoint left cosets
\begin{equation}\label{cosetdec}U_\ell(N)\begin{pmatrix}
\ell&0\\
0&1\\
\end{pmatrix}U_\ell(N)=\bigsqcup_{i}\alpha_iU_\ell(N).
\end{equation}
The number of left cosets in the above decomposition is $\ell+1$ if $\ell\nmid pN$, and $\ell$ if $\ell| N$.
If $f\in S(1,N)$, then $T_\ell(f)$ is defined by the formula
\begin{equation}\label{defHecke} T_\ell(f)(x)=\sum_if(x\alpha_i),
\end{equation}
where $\alpha_i\in\gl_2(\qu_\ell)\simeq D^*_\ell$ is considered an element of $D^*_\A$ thank
to the natural embedding $D^*_\ell\subset D^*_\A$. It is clear from their definition that the operators
$T_\ell$ commute with each other.

In section \ref{isoclass} the set $\Omega(1,N)$ will be given a moduli interpretation in terms of supersingular
elliptic curves. The Hecke operator $T_\ell$ corresponds to a certain averaging operator over degree $\ell$ isogenies (cf. section \ref{operiamo}).
It will be shown that:

\begin{lem}\label{semisimple} If $\ell\nmid pN$ then $T_\ell$ is a {\it semisimple} endomorphism of $S(1,N)$.
\end{lem}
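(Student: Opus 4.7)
The strategy is to endow $S(1,N)$ with a Hermitian inner product with respect to which $T_\ell$ is normal, and then to invoke the spectral theorem in finite dimensions.

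\emph{Step 1 (Inner product).} For each class $[x]\in\Omega(1,N)$ the stabilizer
\[\Gamma_x=\bigl(D^*_\qu\cap xU(1,N)x^{-1}\bigr)\big/\bigl(Z_\qu\cap xU(1,N)x^{-1}\bigr)\]
is a finite group. Setting $w_x=1/|\Gamma_x|$, I define
\[\langle f,g\rangle=\sum_{[x]\in\Omega(1,N)}w_x\,f(x)\,\overline{g(x)}.\]
Up to normalization this is the pairing inherited from the (finite) quotient measure on $Z_\A D^*_\qu\backslash D^*_\A/U(1,N)$.

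\emph{Step 2 (Adjoint formula).} Let $\langle\ell\rangle$ denote right translation on $S(1,N)$ by the central idele with $\ell$ at the place $\ell$ and $1$ elsewhere; this operator is well defined (central elements normalize $U(1,N)$) and is unitary with respect to $\langle\cdot,\cdot\rangle$ since translation preserves the quotient measure. Unfolding $\langle T_\ell f,g\rangle$ via \eqref{cosetdec} and reindexing gives
\[T_\ell^{*}=T_\ell\circ\langle\ell\rangle^{-1}.\]
The only nonformal input is a purely local statement: assuming $\ell\nmid pN$, so that $U_\ell(N)=\gl_2(\z_\ell)$, one has the identity
\[\gl_2(\z_\ell)\,\alpha^{-1}\,\gl_2(\z_\ell)=\ell^{-1}\cdot\gl_2(\z_\ell)\,\alpha\,\gl_2(\z_\ell),\qquad\alpha=\begin{pmatrix}\ell&0\\0&1\end{pmatrix},\]
obtained by conjugating $\alpha^{-1}$ by the Weyl element $\begin{pmatrix}0&1\\1&0\end{pmatrix}$ and extracting a factor of $\ell\cdot I$.

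\emph{Step 3 (Conclusion).} Right translation by a central element commutes with every Hecke operator, so $\langle\ell\rangle$ commutes with $T_\ell$. Consequently
\[T_\ell\,T_\ell^{*}=T_\ell\cdot T_\ell\langle\ell\rangle^{-1}=T_\ell\langle\ell\rangle^{-1}\cdot T_\ell=T_\ell^{*}\,T_\ell,\]
so $T_\ell$ is normal on the finite--dimensional Hermitian space $S(1,N)$, and hence diagonalizable.

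\emph{Main obstacle.} The technical crux is Step 2, where one must track how the weights $w_x$ transform under left multiplication by the coset representatives $\alpha_i$. The cleanest formulation of the adjoint formula is the Brandt--type symmetry of degree--$\ell$ isogeny counts between supersingular elliptic curves, weighted by the reciprocals of their automorphism groups; I therefore expect the identification $T_\ell^{*}=T_\ell\circ\langle\ell\rangle^{-1}$ to fall out most transparently once the moduli interpretation of $\Omega(1,N)$ of section~\ref{isoclass} is available, where the symmetry becomes the bijection between an $\ell$--isogeny $E\to E'$ and its dual $E'\to E$.
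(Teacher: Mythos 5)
Your proof follows essentially the same route as the paper's: the paper likewise puts an inner product on the finite set $\Omega(1,N)\simeq\Sigma(1,N)$ and uses the existence of the dual isogeny to show that $T_\ell$ commutes with its adjoint, hence is semisimple (citing Gross, \emph{Heights and special values of $L$-series}, Prop.\ 2.7). The only cosmetic difference is that the paper takes the unweighted inner product in which the characteristic functions of singletons are orthonormal, which is legitimate here because Lemma \ref{wek} forces every triple $(E,\omega,x)$ to have trivial automorphism group, so your weights $w_x$ are all equal to $1$.
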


A consequence of the lemma is that $S(1,N)$ decomposes into the direct sum of common eigenspaces for the
$T_\ell$, where $\ell\nmid pN$.

The normalizer of $U(1,N)$ in $D^*_\A$ is
$$U'(1,N)=\prod_{\ell\neq p}U'_\ell(N)\times D^*_p\times D^*_\infty,$$
it acts naturally on $S(1,N)$ by right translation, and an action of
$$U'(1,N)/U(1,N)={(\z/N\z)}^*\times D_p^*/R^*_p(1)$$
on $S(1,N)$ can be deduced. The automorphism of $S(1,N)$ determined by $d\in{(\z/N\z)}^*$ is
denoted by $\langle d\rangle$ and called {\it diamond operator}.

The isomorphism class of the $k^*$ representation on $S(1,N)$ deduced from the action of
$D_p^*/R_p^*(1)$ will be determined in section \ref{isoclasskstar}. We will see that $S(1,N)$ is close to be a
multiple of the regular representation for $k^*$.

We conclude the section describing an Hecke invariant subspace ${S(1,N)}^{\rm Eis}$ of $S(1,N)$.

\begin{defi} The subspace of $S(1,N)$ given by those functions $f:D^*_\A\rightarrow\ci$ that can be
factored through the reduced norm map ${\rm Nr}:D^*_\A\rightarrow \A^*$ is denoted by
${S(1,N)}^{\rm Eis}$.
\end{defi}

Fix now a character $\chi:\f_p^*\rightarrow\ci^*$ of order $p-1$.

\begin{prop}\label{eisen} The space ${S(1,N)}^{\rm Eis}$ has dimension $(p-1)$ and is independent of $N$.
It has a basis $(e_1,\ldots,e_{p-1})$ consisting of eigenvectors for all the Hecke operators $T_\ell$,
with $\ell\neq p$, such that if $\ell\nmid pN$ we have $T_\ell(e_i)=(\ell + 1)\chi^{-i}(\ell)e_i$.
\end{prop}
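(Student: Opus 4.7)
The plan is to realize $S(1,N)^{\rm Eis}$ as the space of complex--valued functions on a finite abelian group isomorphic to $\f_p^*$, whose character decomposition immediately supplies the desired Hecke eigenbasis.

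By definition every $f\in S(1,N)^{\rm Eis}$ factors uniquely as $f=g\circ\norm$ for some function $g$ on the image $\norm(D^*_\A)\subseteq\A^*$. Left $D^*_\qu$--invariance and right $U(1,N)$--invariance of $f$ translate into invariance of $g$ under $\norm(D^*_\qu)$ and $\norm(U(1,N))$ respectively. Hence $S(1,N)^{\rm Eis}$ is isomorphic to the space of complex functions on the double coset
\[
Y(N)\;=\;\norm(D^*_\qu)\,\backslash\,\norm(D^*_\A)\,/\,\norm(U(1,N)).
\]
First I would evaluate these three groups. Since $D$ is ramified at $\infty$, the Hasse--Schilling--Maass norm theorem gives $\norm(D^*_\qu)=\qu^*_{>0}$. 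Locally, the reduced norm is surjective at every non--archimedean place and equals $\re_{>0}$ at infinity, so $\norm(D^*_\A)$ is the open subgroup $\A^*_+$ of ideles with positive archimedean component. For $U(1,N)$: the determinant sends $U_\ell(N)$ onto $\z_\ell^*$ for every $\ell\neq p$, $\norm(D^*_\infty)=\re_{>0}$, and I claim $\norm(R_p^*(1))=1+p\z_p$. This last identity follows from the exact sequence $1\to R_p^*(1)\to R_p^*\to k^*\to 1$, its $\z_p^*$--analogue, and the surjectivity of both $\norm:R_p^*\to\z_p^*$ (via the unramified quadratic subfield) and $N_{k/\f_p}:k^*\to\f_p^*$, by a snake--lemma argument.

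Combining these ingredients with the class--number--one decomposition $\A^*_+=\qu^*_{>0}\cdot\hat\z^*\cdot\re_{>0}$, which has trivial intersection, a direct computation yields a canonical isomorphism
\[
Y(N)\;\simeq\;\z_p^*/(1+p\z_p)\;\simeq\;\f_p^*,
\]
visibly independent of $N$. In particular $\dim S(1,N)^{\rm Eis}=p-1$, and the basis $(e_1,\ldots,e_{p-1})$ is obtained by pulling back the characters $\chi^i:\f_p^*\to\ci^*$ along the composite $\Omega(1,N)\to Y(N)\simeq\f_p^*$.

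To verify the eigenvalue formula, fix $\ell\nmid pN$ and use the coset decomposition $(\ref{cosetdec})$ with its $\ell+1$ summands. Each $\alpha_i$ satisfies $\det(\alpha_i)\in\ell\cdot\z_\ell^*$, so the norms $\norm(\alpha_i)$ all represent a single class $c_\ell\in\f_p^*$ and $T_\ell$ acts on $g$ by $(T_\ell g)(y)=(\ell+1)\,g(y\cdot c_\ell)$. To pin down $c_\ell$, I trade the idele $\ell_{[\ell]}$ (value $\ell$ at place $\ell$, $1$ elsewhere) for the equivalent $\ell^{-1}\cdot\ell_{[\ell]}\in\hat\z^*\cdot\re_{>0}$, whose $p$--component is $\ell^{-1}$; reducing modulo $1+p\z_p$ gives $c_\ell=\bar\ell^{-1}$. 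Hence
\[
T_\ell(e_i)=(\ell+1)\,\chi^i(\bar\ell^{-1})\,e_i=(\ell+1)\,\chi^{-i}(\ell)\,e_i,
\]
as claimed; the same argument with $\ell$ coset representatives handles $\ell\mid N$ and shows the $e_i$ remain common eigenvectors of every $T_\ell$ with $\ell\neq p$. The principal obstacle is the local identity $\norm(R_p^*(1))=1+p\z_p$; once it is granted, the remainder of the proof is careful strong--approximation bookkeeping.
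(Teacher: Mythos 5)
Your argument is correct and follows essentially the same route as the paper: identify $S(1,N)^{\rm Eis}$ with functions on the quotient of $\norm(D^*_\A)$ by $\norm(D^*_\qu)\cdot\norm(U(1,N))$, show this quotient is $\f_p^*$ (whence dimension $p-1$ and independence of $N$), take the $e_i$ to be pullbacks of the characters $\chi^i$, and read off the eigenvalue from the fact that $\norm(\alpha_i)$ represents $\ell^{-1}$ in $\f_p^*$. The only difference is that you supply more detail on the local norm computations (Hasse--Schilling--Maass, $\norm(R_p^*(1))=1+p\z_p$), which the paper asserts without proof.
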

The meaning of the first assertion is that ${S(1,N)}^{\rm Eis}$ defines a space of locally constant
functions on $D^*_\A$ which has dimension $(p-1)$ and that is independent on $N$.
\begin{proof} Let
$$\A^*=\qu^*\times\hat\z^*\times\re^*_{>0}$$
be the canonical decomposition of $\A^*$ into the product of the discrete subgroup $\qu^*$ and the open
subgroup $\hat\z^*\times\re^*_{>0}$. The image of the reduced norm map ${\rm Nr}:D^*_\A\rightarrow\A^*$
is the index two subgroup which, in the above decomposition of $\A^*$, corresponds to
$\qu^*_{>0}\times\hat\z^*\times\re^*_{>0}$. The space ${S(1,N)}^{\rm Eis}$ is identified with the space of functions
$$f:\qu^*_{>0}\times\hat\z^*\times\re^*_{>0}\longrightarrow\ci$$
that are invariant under the image of $D^*_\qu\cdot U(1,N)$ with respect to the reduced
norm map ${\rm Nr}$. Since
$${\rm Nr}(D^*_\qu\cdot U(1,N))=\qu^*_{>0}\times\prod_{\ell\neq p}\z_\ell^*\times\z_p^*(1)\times\re^*_{>0},$$
where $\z_p^*(1)$ is the subgroup of $\z_p^*$ of units that are congruent to $1$ modulo $p$, the elements of
${S(1,N)}^{\rm Eis}$ are the functions on $D^*_\A$ that factor through the composition
$$r\cdot{\rm Nr}:D^*_\A\longrightarrow\qu^*_{>0}\times\hat\z^*\times\re^*_{>0}\longrightarrow\f_p^*,$$
where the fist map {\rm Nr} is the reduced norm, and $r$ is the projection onto the quotient group
$$\left(\qu^*_{>0}\times\hat\z^*\times\re^*_{>0}\right)/\left(\qu^*_{>0}\times\prod_{\ell\neq p}\z_\ell^*\times\z_p^*(1)\times\re^*_{>0}\right)=\f_p^*.$$
The first part of the proposition then follows.

To complete the proof, we construct an explicit basis for ${S(1,N)}^{\rm Eis}$ given by eigenvectors for all the Hecke operators $T_\ell$, with
$\ell\neq p$. For an integer $j$ with $1\leq j\leq p-1$, define $e_j\in {S(1,N)}^{\rm Eis}$ to be the composition $\chi^j\cdot r\cdot{\rm Nr}$.
By the linear independence of characters, we have that $(e_1,\ldots,e_{p-1})$ is a basis of ${S(1,N)}^{\rm Eis}$. Let now $\ell\nmid pN$
be a prime number, and $x\in D^*_\A$ any element. By definition of $T_\ell$, we have
$$(T_\ell e_j)(x)=\sum_{i}e_j(x\alpha_i),$$
where the $\alpha_i\in D^*_\ell\subset D^*_\A$ are representatives for the double coset in formula $(\ref{cosetdec})$.
Observe now that $e_j(x\alpha_i)=e_j(x)e_j(\alpha_i)$, moreover $e_j(\alpha_i)=\chi^{j}\cdot r\cdot{\rm Nr}(\alpha_i)$.
For any $\alpha_i$, the idele ${\rm Nr}(\alpha_i)$ is $1$ at every place other than the $\ell$--adic one, and it is equal to
$\ell$ at the $\ell$--adic place. It is easy to see that $r\cdot{\rm Nr}(\alpha_i)=\ell^{-1}\in\f_p^*$, therefore
$e_j(\alpha_i)=\chi^{j}(\ell^{-1})$ and the proposition follows.
\end{proof}

\section{Hecke eigenvalues and cusp forms}

We recall the details of the dictionary between systems of Hecke eigenvalues arising from the module
$S(1,N)$ introduced in the previous section, and unitary cusp form on $D^*$ satisfying conditions i) and ii).
In this correspondence, the strong multiplicity one result for $L^2(X,\psi)$ plays an important role.

\begin{defi} A system of Hecke eigenvalues arising from $S(1,N)$ is a collection $\Phi={(a_\ell)}_{\ell\nmid pN}$ of
complex numbers such that there exists a nonzero element $f\in S(1,N)$ with $T_\ell (f)=a_\ell f$ for all primes $\ell\nmid pN$.
\end{defi}
For a system of eigenvalues $\Phi$ we define the corresponding isotypical component of $S(1,N)$ as
$${S(1,N)}^{\Phi}=\{f\in S(1,N)\hphantom{.}|\hphantom{.}T_\ell(f)=a_\ell
f,\hphantom{.} {\rm for}\hphantom{.}{\rm all}\hphantom{.}\ell\nmid pN\}.$$
According to lemma \ref{semisimple}, there is a decomposition
$$S(1,N)=\bigoplus_{\Phi}{S(1,N)}^{\Phi}$$
of $S(1,N)$ into the direct sum of its isotypical components. Since the action on $S(1,N)$ by right translation of the normalizer
$U'(1,N)$ of $U(1,N)$ commutes with that of $T_\ell$, for all primes $\ell\nmid pN$, each summand ${S(1,N)}^\Phi$ is
a representation of $U'(1,N)/U(1,N)={(\z/N\z)}^*\times D^*_p/R^*_p(1)$.

Let now $\pi$ be an element of $\mathcal{A}(p,N)$; that is, $\pi$ is an infinite dimensional, unitary cusp form on $D^*$ of conductor $N$,
and satisfying conditions i) and ii), which occurs on a closed subspace $V(\pi)$ of $L^2(X,\psi)$, for some central character $\psi$ whose
conductor divides $N$. Associated to $\pi$ there is a system of Hecke eigenvalues $\Phi={(a_\ell)}_{\ell\nmid pN}$:

\begin{prop} The space $V(\pi)\cap S(1,N)$ is non empty. For every prime $\ell\nmid pN$ the operator $T_\ell$ acts on
on $V(\pi)\cap S(1,N)$ via multiplication by a certain scalar $a_\ell$.
\end{prop}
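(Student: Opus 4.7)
The plan is to realize $V(\pi)\cap S(1,N)$ as the space of $U(1,N)$-fixed vectors inside $V(\pi)$, and then to analyze this using the restricted tensor product decomposition $\pi\simeq\bigotimes'_\nu\pi_\nu$ together with the theory of the new vector at each ramified non-archimedean place.

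First I would observe the identification
\[
V(\pi)\cap S(1,N)=V(\pi)^{U(1,N)}.
\]
Indeed, an element of the right hand side is, by condition i) and smoothness, a function on $D^*_\qu\backslash D^*_\A$ which is trivial on the archimedean factor and invariant under the open compact subgroup $\prod_{\ell\neq p}U_\ell(N)\times R_p^*(1)$, hence descends to a function on the finite discrete set $\Omega(1,N)$, and therefore lies in $S(1,N)$. The converse is immediate from the definitions.

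Next I would invoke the tensor product theorem together with the compatibility of the fixed vector functor with restricted tensor products: for the open compact subgroup $K=\prod_{\nu}K_\nu$ with $K_\ell=U_\ell(N)$ for $\ell\neq p$, $K_p=R_p^*(1)$, and $K_\infty=D^*_\infty$, one has a canonical identification
\[
\pi^{K}\;\simeq\;\bigotimes_{\nu}\pi_\nu^{K_\nu}.
\]
To prove that $V(\pi)^{U(1,N)}\neq 0$, I would check non-vanishing of each local factor: at $\infty$ this is condition i), at $p$ it is condition ii), at primes $\ell\nmid pN$ the representation $\pi_\ell$ is by definition unramified so $\pi_\ell^{\gl_2(\z_\ell)}$ is one-dimensional, and at primes $\ell\mid N$ the conductor condition $N(\pi)=N$ together with Casselman's new vector theorem (cf.~\cite{Ca}) guarantees that $\pi_\ell^{U_\ell(N)}$ is one-dimensional (since $U_\ell(N)$ is the local analogue of $\Gamma_1(\ell^{v_\ell(N)})$ and $v_\ell(N)=v_\ell(c(\pi_\ell))$).

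Finally, for $\ell\nmid pN$, the Hecke operator $T_\ell$ is defined purely in terms of the local component at $\ell$, so it acts on the tensor factorisation through its action on $\pi_\ell^{\gl_2(\z_\ell)}$. Since this local factor is one-dimensional, $T_\ell$ acts there as a scalar $a_\ell$, and therefore acts on all of $V(\pi)^{U(1,N)}=V(\pi)\cap S(1,N)$ by the same scalar. The main subtlety in this argument is the bookkeeping of step one, namely justifying that an abstract $U(1,N)$-fixed element of the Hilbert space $V(\pi)$ is literally represented by a locally constant function in $S(1,N)$ rather than merely an $L^2$-class; this follows from smoothness of $K$-fixed vectors under an open compact $K$, combined with the triviality of $\pi_\infty$.
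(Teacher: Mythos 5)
Your proof is correct and follows essentially the same route as the paper's: identify $V(\pi)\cap S(1,N)$ with $V(\pi)^{U(1,N)}$, decompose it as a restricted tensor product of local fixed spaces, and use the one--dimensionality of $\pi_\ell^{U_\ell(N)}$ for $\ell\nmid pN$ to get the scalar action of $T_\ell$. You merely make explicit two points the paper leaves implicit, namely the smoothness argument identifying the $L^2$--class with a locally constant function and the non--vanishing of the local fixed spaces at primes dividing $N$ via Casselman's new vector theorem.
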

By definition, the collection $\Phi(\pi)={(a_\ell)}_{\ell\nmid pN}$ is the system of eigenvalues associated to $\pi$.
\begin{proof} The space $V(\pi)$ is isomorphic to a restricted tensor product
$\otimes'V(\pi_\ell)$ of local representations, where we may disregard the trivial component at infinity. It follows that
the space ${V(\pi)}^{U(1,N)}$ of $U(1,N)$--invariant vectors is the restricted tensor product
$$\otimes'_{\ell\neq p}{V(\pi_\ell)}^{U_\ell(N)}\otimes{V(\pi_p)}^{R_p^*(1)}.$$
For any $\ell\nmid pN$ the representation $\pi_\ell$ is unramified, and the space ${V(\pi_\ell)}^{U_\ell(N)}$ is one--dimensional
(cf. \cite{Del}, \S 2.2). The Hecke operator $T_\ell$ acts on it as scalar multiplication, the proposition follows.
\end{proof}

A consequence of the strong multiplicity one result for $L^2(X,\psi)$, as we shall see below, is that in fact the space
$V(\pi)\cap S(1,N)$ {\it coincide} with the full isotypical component ${S(1,N)}^{\Phi(\pi)}$.

Conversely, to any system of eigenvalues $\Phi$ occurring in $S(1,N)$ one can attach a cusp form $\pi(\Phi)$ on $D^*$
as follow. Let $f\in {S(1,N)}^\Phi$ be any nonzero automorphic form, and set
$$L^2(X,N)=\bigoplus_{\psi\in\widehat {{(\z/N\z)}^*}}L^2(X,\psi).$$
Consider the smallest closed subspace $V_f$ of $L^2(X,N)$ that contains $f$ and is stable under $D^*_\A$, and denote
by $\pi_f$ the corresponding unitary representation of $D^*_\A$. The following proposition is a standard
consequence of the strong multiplicity one result for $L^2(X,N)$ (cf. \cite{Ca}, Thm. 2):

\begin{prop}\label{eigcus} The representation $\pi_f$ is irreducible and defines a cusp form on $D^*$. The space ${V_f}^{U(1,N)}$ of
$U(1,N)$--invariant vectors coincide with ${S(1,N)}^{\Phi}$, and $V_f$ depends only on $\Phi$, and not on the choice of
$0\neq f\in S(1,N)^{\Phi}$. The action of ${(\z/N\z)}^*$ on ${S(1,N)}^{\Phi}$ is via homotheties specified by a character
$\psi_0:{(\z/N\z)}^*\rightarrow\ci^*$. The action of $D^*_p/R^*_p(1)$ on ${S(1,N)}^{\Phi}$ is isomorphic to a multiple of ${(\pi_{f})}_p$,
where ${(\pi_f)}_p$ is the local component at $p$ of $\pi_f$, moreover $p\in D_p^*$ acts on ${S(1,N)}^\Phi$ as
multiplication by ${\psi_0(p)}^{-1}$.
\end{prop}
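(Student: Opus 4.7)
The plan is to combine the spectral decomposition of $L^2(X,N)=\bigoplus_\psi L^2(X,\psi)$ (Theorems 1.1 and 1.2 applied to each central character $\psi$ trivial on $Z_\A\cap U(1,N)$) with strong multiplicity one. First, $S(1,N)$ embeds naturally into $L^2(X,N)$: its elements are left $D^*_\qu$--invariant, right $U(1,N)$--invariant functions factoring through the finite discrete set $\Omega(1,N)/Z_\A$, hence square--integrable modulo center, and they decompose as a finite sum over the admissible central characters. Thus $V_f$ is a well defined closed $D^*_\A$--stable subspace of $L^2(X,N)$.

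Decomposing $V_f=\widehat{\bigoplus}_\alpha V(\pi_\alpha)$ as a Hilbert direct sum of irreducibles, the minimality of $V_f$ forces the projection of $f$ onto each summand to be nonzero, so $V(\pi_\alpha)^{U(1,N)}\neq 0$ for every $\alpha$. Projecting the identity $T_\ell f=a_\ell f$ onto each $V(\pi_\alpha)$ shows that every $\pi_\alpha$ is unramified at each $\ell\nmid pN$ with eigenvalue $a_\ell$, hence all $\pi_\alpha$ share isomorphic local components at almost every place. Strong multiplicity one (cf.~\cite{Ca}, Thm.~2), together with Theorem 1.2, then forces all the $\pi_\alpha$ to coincide as subspaces of $L^2(X,N)$, proving that $V_f=V(\pi_f)$ is irreducible.

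For the equality $V_f^{U(1,N)}=S(1,N)^\Phi$, the inclusion $\subseteq$ is immediate from the restricted tensor product decomposition
$$V_f^{U(1,N)}=\otimes'_{\ell\neq p}V((\pi_f)_\ell)^{U_\ell(N)}\otimes V((\pi_f)_p)$$
(condition ii) accounts for the $p$--slot), together with the one--dimensionality of the unramified factors $V((\pi_f)_\ell)^{U_\ell(N)}$ for $\ell\nmid pN$ on which $T_\ell$ acts as the scalar $a_\ell$. For the reverse inclusion, apply the irreducibility argument to any nonzero $g\in S(1,N)^\Phi$ to produce an irreducible $V_g$ with the same Hecke eigenvalue system $\Phi$; strong multiplicity one then forces $V_g=V_f$, so $g\in V_f^{U(1,N)}$. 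The independence of $V_f$ on the choice of $f$ follows automatically.

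For the last assertions I would use central lifts. Given $d\in(\z/N\z)^*$, lift it to $\tilde d\in\hat\z^*$; the central idele $\tilde d\cdot I\in Z_\A\cap U'(1,N)$ represents the class of $d$ in $U'(1,N)/U(1,N)$, and being central it acts on $V_f$ via the global central character as the scalar $\psi_0(d):=\psi(\tilde d\cdot I)$, which is independent of the lift since $\psi$ is trivial on $Z_\A\cap U(1,N)$. Thus $(\z/N\z)^*$ acts on $S(1,N)^\Phi$ by homotheties through the character $\psi_0$. By the tensor decomposition and condition ii), the $D^*_p/R^*_p(1)$--action on $S(1,N)^\Phi$ is concentrated in the factor $V((\pi_f)_p)$, making it a multiple of $(\pi_f)_p$. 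Finally, the diagonal embedding of $p\in\qu^*\subset Z_\A$ acts trivially on $V_f$ by left $D^*_\qu$--invariance; combining this with the triviality of the archimedean central character and of each local $\omega_\ell$ on $\z_\ell^*$ for $\ell\nmid N$ forces $p\in D^*_p$ to act as the inverse of $\prod_{\ell\mid N}\omega_\ell(p)=\psi_0(p)$, i.e.~as $\psi_0(p)^{-1}$. The principal obstacle is the careful invocation of strong multiplicity one: one must justify that matching Hecke eigenvalues at all $\ell\nmid pN$ genuinely pin down the local components $(\pi_\alpha)_\ell$ at those places, so that Theorem 1.2 applies uniformly across the summands to collapse the spectral decomposition of $V_f$.
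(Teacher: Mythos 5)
Your argument is correct and is precisely the standard deduction the paper has in mind: the paper gives no proof of this proposition, asserting it only as a consequence of strong multiplicity one for $L^2(X,N)$ (citing Casselman), and your route via the spectral decomposition of $V_f$, strong multiplicity one to collapse the summands, and the restricted tensor product description of the $U(1,N)$--invariants is exactly that deduction carried out, with the genuinely delicate point (that matching $T_\ell$--eigenvalues at all $\ell\nmid pN$ pins down the unramified local components, central character included) correctly flagged. One small point of care: when realizing $\langle d\rangle$ by a central idele $\tilde d\in\hat\z^*$ you should choose the component $\tilde d_p\in\z_p^*(1)$, since otherwise $\tilde d$ represents the class $(d,\bar d_p)$ rather than $(d,1)$ in $U'(1,N)/U(1,N)={(\z/N\z)}^*\times D_p^*/R_p^*(1)$ and the homothety you compute would be contaminated by the $k^*$--action.
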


The cusp form $\pi_f$ and the Hilbert space $V_f$ will also be denoted by $\pi_\Phi$ and $V_\Phi$, respectively.
The central character $\psi_\Phi$ of $\pi_\Phi$ is obtained by composing the natural map
$$\A^*\longrightarrow\hat\z^*\longrightarrow{(\z/N\z)}^*$$
with the character $\psi_0$ given by the Proposition \ref{eigcus}. We have that $\pi_\Phi$ is an irreducible constituent of $L^2(X,\psi_\Phi)$.

The next lemma characterizes the automorphic functions $f\in S(1,N)$ so that $\pi_f$ is finite dimensional:

\begin{lem}\label{findimonedim} Let $f\in S(1,N)$ be an eigenvector for all the Hecke operator $T_\ell$, with $\ell\nmid pN$. If $\pi_f$
is finite dimensional, then it is one--dimensional, and $f\in {S(1,N)}^{\rm Eis}$.
\end{lem}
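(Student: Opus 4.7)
The plan is to exploit the well-known fact that every finite-dimensional irreducible smooth representation of $\gl_2$ over a non-archimedean local field factors through the determinant. With this input, the lemma becomes essentially a bookkeeping exercise using the restricted tensor product decomposition of $\pi_f$.

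First I would invoke Proposition \ref{eigcus} together with the tensor product theorem to write $\pi_f\simeq\bigotimes'_{\nu}\pi_{f,\nu}$. Since $\pi_f$ is finite dimensional, each local component $\pi_{f,\nu}$ must itself be finite dimensional (a single infinite-dimensional factor would make the whole restricted tensor product infinite dimensional, as already noted in the remark following the tensor product theorem in section 1.1). At $\nu=\infty$ this holds by condition i) of the definition of $\mathcal{A}(p,N)$, and at $\nu=p$ this is automatic from the discussion in subsection \ref{lnp}, where $\pi_p$ is seen to be of dimension one or two. For $\ell\neq p$, after identifying $D^*_\ell\simeq\gl_2(\qu_\ell)$, $\pi_{f,\ell}$ is a finite dimensional, smooth, irreducible representation of $\gl_2(\qu_\ell)$, and hence factors as $\chi_\ell\circ\det$ for some character $\chi_\ell$ of $\qu_\ell^*$; equivalently it factors through the reduced norm $\mathrm{Nr}:D_\ell^*\fd\qu_\ell^*$.

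Assembling the local pieces, $\pi_f$ factors through $\mathrm{Nr}:D^*_\A\fd\A^*$ via a Hecke character $\chi$. Being irreducible and factoring through an abelian quotient, $\pi_f$ must be one dimensional, so $V_f=\ci\cdot f$. Hence $f$ is a scalar multiple of $\chi\circ\mathrm{Nr}$ and therefore lies in ${S(1,N)}^{\rm Eis}$ by definition.

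The only nontrivial step is the local input that finite dimensional smooth irreducible representations of $\gl_2(\qu_\ell)$ factor through $\det$. The standard justification is that $\sll_2(\qu_\ell)$ is topologically generated by its unipotent subgroups and equals its own commutator subgroup, so any finite dimensional representation is forced to be trivial on $\sll_2(\qu_\ell)$; the quotient $\gl_2(\qu_\ell)/\sll_2(\qu_\ell)\simeq\qu_\ell^*$ is realized by $\det$, giving the required factorization. With this fact, the argument above is essentially formal and I would expect no further obstacles.
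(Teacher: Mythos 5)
Your handling of the places $\ell\neq p$ and $\infty$ is fine and is essentially what the paper does implicitly (the paper's ``for any $\ell\neq p$ the local representation ${(\pi_f)}_\ell$ is described by a character'' is exactly the fact that a finite--dimensional smooth irreducible representation of $\gl_2(\qu_\ell)$ factors through $\det$). The gap is at $p$. You correctly record that $(\pi_f)_p$ is finite dimensional of dimension one \emph{or two}, and then in the next sentence you ``assemble the local pieces'' into a factorization of $\pi_f$ through ${\rm Nr}:D^*_\A\rightarrow\A^*$ --- but that factorization is false precisely when $\dim(\pi_f)_p=2$, and nothing in your argument excludes that case. The local input you use away from $p$ has no analogue at $p$: $D_p^*$ is the unit group of a division algebra, it contains no unipotents, and it has genuine two--dimensional smooth irreducible representations trivial on $R_p^*(1)$ (case 2) of subsection \ref{lnp}). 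Tensoring such a $\pi_p$ with characters at all other places yields a perfectly good two--dimensional irreducible smooth representation of $D^*_\A$; what rules it out for $\pi_f$ is \emph{automorphy}, i.e.\ a global input, which your argument never invokes at $p$. Since ruling out the two--dimensional case at $p$ is the entire content of the lemma, this is a genuine gap rather than a cosmetic omission.

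The paper closes exactly this point with a global argument: for $g\in D^*\subset D^*_\A$ one writes $g=g_pg^{(p)}$ and uses that $\pi_f$ is trivial on the diagonally embedded $D^*_\qu$ to get $\pi_f(g_p)={\pi_f(g^{(p)})}^{-1}$; the right--hand side is a scalar because the components away from $p$ are characters, so $(\pi_f)_p$ acts by scalars on the dense subgroup $D^*\subset D_p^*$. Together with the smoothness of $(\pi_f)_p$ (triviality on an open subgroup), this forces $(\pi_f)_p$ to be abelian, hence one--dimensional, and only then does $\pi_f$ factor through ${\rm Nr}$ and $f$ land in ${S(1,N)}^{\rm Eis}$. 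To repair your proof you must insert a step of this kind (or some equivalent use of the automorphy of $\pi_f$, e.g.\ strong approximation for the reduced--norm--one subgroup of $D^*$, which is dense in the norm--one subgroup of $D_p^*$ times the places where $\pi_f$ is already known to be trivial) before asserting the global factorization through the reduced norm.
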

\begin{proof} Let $g\in D^*\subset D^*_\A$ be any element, write $g=g_pg^{(p)}$, where $g_p$ is the adelic element of
$D^*_\A$ whose only non--trivial component is the $p$--th component of $g$ in $D_p^*$. Then we have that
\begin{equation}\label{onedimeatp}\pi_f(g_p)={\pi_f(g^{(p)})}^{-1},
\end{equation}
since $\pi_f(D^*)$ is trivial. By assumption, for any $\ell\neq p$ the local representation ${(\pi_f)}_\ell$ is described by
a character, therefore equation (\ref{onedimeatp}) implies that the restriction to $D^*$ of the local representation
${(\pi_f)}_p$ is given described by a character. Now since ${(\pi_f)}_p$ is trivial on an open subgroup of $D_p^*$ and
is described by a character on the dense subgroup $D^*$, we have that ${(\pi_f)}_p$ is abelian and hence one--dimensional.
It follows that $f$ factors through the reduced norm map ${\rm Nr}:D^*_\A\rightarrow\A^*$.
\end{proof}

Together with Proposition \ref{eisen}, Lemma \ref{findimonedim} implies that there are precisely $(p-1)$ systems of eigenvalues $\Phi$
arising from $S(1,N)$ and for which $\pi_\Phi$ is finite dimensional. Such $\Phi$ have been explicitly described 
in Proposition \ref{eisen}.

Let now $\Phi={(a_\ell)}_{\ell\nmid pN}$ be a system of eigenvalues arising from $S(1,N)$ such that the associated
cusp form $\pi=\pi_\Phi$ is {\it infinite} dimensional. All the local components $\pi_\ell$ for $\ell\neq p$ are also infinite
dimensional, and the prime to $p$ conductor of $\pi$ is denote by $N(\pi)$.

Let $t:\z_{>0}\rightarrow\z_{>0}$ be the function whose value in $n>0$ is the number of all positive divisors of $n$.

\begin{lem}\label{locallem} The level $N$ is divisible by $N(\pi)$, moreover
$$\dim({S(1,N)}^\Phi)=\dim(\pi_p)\cdot t(N/N(\pi)).$$
\end{lem}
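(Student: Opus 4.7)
The plan is to reduce the computation of $\dim {S(1,N)}^\Phi$ to a product of local computations, using the factorization $V_\Phi=\bigotimes'V(\pi_\nu)$ together with the tensor product decomposition of $U(1,N)$, and then to invoke the standard theory of local new- and oldforms at each finite prime $\ell\neq p$.

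First, by Proposition \ref{eigcus} we have
$${S(1,N)}^\Phi={V_\Phi}^{U(1,N)},$$
so I am reduced to computing the dimension of the space of $U(1,N)$-invariant vectors of $V_\Phi$. Since the group $U(1,N)$ is the restricted product of its local factors, and the restricted tensor product is compatible with passage to invariants, there is a canonical identification
$${V_\Phi}^{U(1,N)}\simeq\left(\bigotimes_{\ell\neq p}{}'\,{V(\pi_\ell)}^{U_\ell(N)}\right)\otimes{V(\pi_p)}^{R_p^*(1)}\otimes V(\pi_\infty).$$
The archimedean factor is one-dimensional by hypothesis i), while the factor at $p$ is all of $V(\pi_p)$ by the discussion of subsection \ref{lnp}, so that it contributes a factor $\dim(\pi_p)$.

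The core of the argument is then the computation, for each prime $\ell\neq p$, of $\dim V(\pi_\ell)^{U_\ell(N)}$. Write $N_\ell=\ell^{n_\ell}$ for the $\ell$-part of $N$, and $c(\pi_\ell)=\ell^{a_\ell}$ for the conductor of $\pi_\ell$. Observe that, under the chosen identification $D^*_\ell\simeq\gl_2(\qu_\ell)$ with $R_\ell^*\simeq\gl_2(\z_\ell)$, the group $U_\ell(N)$ is the subgroup usually denoted $K_1(\ell^{n_\ell})$. Because $\pi$ is infinite-dimensional, each local component $\pi_\ell$ ($\ell\neq p$) is infinite-dimensional, so the theorem of Casselman on the existence of a newvector (\cite{Ca}, as cited already by the authors in the context of Proposition \ref{eigcus}) applies. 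It asserts that $V(\pi_\ell)^{U_\ell(N)}$ is nonzero if and only if $n_\ell\geq a_\ell$, in which case
$$\dim V(\pi_\ell)^{U_\ell(N)}=n_\ell-a_\ell+1.$$
(The necessary triviality of the central character of $\pi_\ell$ on $1+\ell^{n_\ell}\z_\ell$ is automatic once $n_\ell\geq a_\ell$, since the conductor of the central character divides $c(\pi_\ell)$.)

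The first assertion, $N(\pi)\mid N$, now follows: otherwise some local factor would vanish and the whole tensor product would be zero, contradicting the assumption that $\Phi$ arises from $S(1,N)$. For the dimension formula, noting that $n_\ell-a_\ell+1=t(\ell^{n_\ell-a_\ell})=t(N_\ell/c(\pi_\ell))$ and that $t$ is multiplicative on coprime arguments, multiplying the local contributions gives
$$\dim({S(1,N)}^\Phi)=\dim(\pi_p)\cdot\prod_{\ell\neq p}t(N_\ell/c(\pi_\ell))=\dim(\pi_p)\cdot t(N/N(\pi)),$$
as claimed. The only non-routine step is the invocation of the local newform dimension formula; everything else is bookkeeping with the tensor-product decomposition.
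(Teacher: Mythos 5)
Your proof is correct and follows exactly the route the paper intends: it reduces to local invariants via the restricted tensor product decomposition of $V_\Phi^{U(1,N)}$ and then applies Casselman's local newform theorem ($\dim V(\pi_\ell)^{U_\ell(\ell^{n_\ell})}=n_\ell-a_\ell+1$ for $n_\ell\geq a_\ell$, zero otherwise), which is precisely the ``classical local result of Casselman'' the paper cites without further detail. Your handling of the factors at $p$ and $\infty$, and of the divisibility assertion via nonvanishing of the tensor product, matches the paper's setup.
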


The lemma follows from a classical local result of Casselman (cf. \cite{Del} \S 2.2) and is the crucial ingredient that enables us to obtain
a recursive formulas that the functions $A_i(p,N)$ in the main theorem have to satisfy.

\section{Recurrence relations}\label{recurr}

Let $S(1,N)=\oplus_{\eta\in \hat k^*} {S(1,N)}^{\eta}$ be the canonical decomposition of $S(1,N)$ into
isotypical components with respect to the action of $k^*=R_p^*/R_p^*(1)$. Consider the decomposition
\begin{equation}\label{s1s2}S(1,N)=(\bigoplus_{\eta=\eta^p}{S(1,N)}^\eta)\oplus(\bigoplus_{\eta\neq\eta^p}{S(1,N)}^\eta),
\end{equation}
and denote by $S_1(1,N)$ the first summand and by $S_2(1,N)$ the second one. We have that
$S_1(1,N)$ is the largest submodule of $S(1,N)$ on which the action of $D_p^*$ is abelian. Lastly,
denote the isotypical component of $S(1,N)$ with respect to the trivial character of $k^*$ by $S_0(1,N)$.
For $i\in\{0;1;2\}$ let $u_i(p,N)$ denote the dimension of $S_i(1,N)$.

\begin{prop}\label{recrel} For a prime $p$ and an integer $N\geq 1$ not divisible by $p$ we have

$(t*A_0(p,\_))(N)=u_0(p,N) - 1;$

$(t*A_1(p,\_))(N)=u_1(p,N) - (p-1);$

$(t*A_2(p,\_))(N)=u_2(p,N)/2.$

\end{prop}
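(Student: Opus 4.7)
The plan is to decompose $S(1,N)$ into Hecke isotypical components via Lemma \ref{semisimple}, classify each component according to its location in the $k^*$-decomposition $S(1,N)=S_0(1,N)\oplus\cdots$ (with $S_1$ and $S_2$ as in \eqref{s1s2}, and $S_0\subset S_1$), and sum dimensions using Lemma \ref{locallem}.

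First I would write $S(1,N)=\bigoplus_{\Phi}S(1,N)^{\Phi}$ and use Proposition \ref{eigcus}: the action of $D_p^*/R_p^*(1)$ on each summand $S(1,N)^{\Phi}$ is isomorphic to a multiple of $(\pi_\Phi)_p$. Combined with the dichotomy recalled in subsection \ref{lnp}, this shows that $S(1,N)^{\Phi}$ lies in $S_1(1,N)$ precisely when $(\pi_\Phi)_p$ is one--dimensional, and in $S_2(1,N)$ precisely when it is two--dimensional; moreover in the former case $S(1,N)^{\Phi}\subset S_0(1,N)$ iff the character of $k^*$ is trivial, which by definition characterizes $\mathcal{A}_0$ inside $\mathcal{A}_1$.

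Next I would isolate the contribution of the Eisenstein subspace. By Lemma \ref{findimonedim} the only $\Phi$'s in $S(1,N)$ whose associated cusp form $\pi_\Phi$ is finite dimensional are those coming from ${S(1,N)}^{\rm Eis}$, and Proposition \ref{eisen} gives the explicit basis $e_j=\chi^j\cdot r\cdot\mathrm{Nr}$ for $1\le j\le p-1$. Restricting $e_j$ to $R_p^*$: the reduced norm $R_p^*\to\z_p^*$ reduces modulo $p$ to the norm $N_{k/\f_p}:k^*\to\f_p^*$, so the character of $k^*$ defined by $e_j$ is $\chi^j\circ N_{k/\f_p}$. This character satisfies $\eta^p=\eta$ (it factors through $\f_p^*$), hence all $e_j$ lie in $S_1(1,N)$, and it is trivial iff $j=p-1$, so exactly one of them lies in $S_0(1,N)$. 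For the infinite--dimensional systems, Lemma \ref{locallem} tells me that each $\pi\in\mathcal{A}_i(p,M)$ with $M\mid N$ contributes a summand of dimension $\dim(\pi_p)\cdot t(N/M)$, where $\dim(\pi_p)=1$ for $i=0,1$ and $\dim(\pi_p)=2$ for $i=2$.

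Putting everything together:
\begin{align*}
u_0(p,N)&=1+\sum_{M\mid N}A_0(p,M)\,t(N/M),\\
u_1(p,N)&=(p-1)+\sum_{M\mid N}A_1(p,M)\,t(N/M),\\
u_2(p,N)&=\sum_{M\mid N}2\,A_2(p,M)\,t(N/M),
\end{align*}
which are the three claimed identities. The main obstacle is the verification that the $k^*$--character attached to $e_j$ really is $\chi^j\circ N_{k/\f_p}$; everything else is a bookkeeping consequence of the structural results already assembled in the previous sections.
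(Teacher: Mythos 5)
Your proposal is correct and follows essentially the same route as the paper: decompose $S(1,N)$ into Hecke isotypical components, sort them into $S_0$, $S_1$, $S_2$ via Proposition \ref{eigcus} and the local analysis at $p$, separate off the Eisenstein subspace using Lemma \ref{findimonedim} and Proposition \ref{eisen}, and sum dimensions with Lemma \ref{locallem}. The only addition is your explicit verification that the $k^*$--character of $e_j$ is $\chi^j\circ N_{k/\f_p}$ (hence Frobenius--invariant, and trivial only for $j=p-1$), a point the paper asserts without detail; your computation of it is correct.
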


Here $(t*A_i)$ denotes the convolution product between the multiplicative function $t$ introduced right before
Lemma \ref{locallem} and the function $N\rightarrow A_i(p,N)$.

\begin{proof} Observe first that the subspaces $S_0(1,N)$, $S_1(1,N)$ and $S_2(1,N)$, just defined
are invariant under the action of the Hecke operators $T_\ell$, for every prime $\ell\nmid pN$.
This follows from the fact that, for any system of eigenvalues $\Phi$, the $D^*_p$--representation ${S(1,N)}^\Phi$
is a multiple $m{(\pi_\Phi)}_p$ of the local component at $p$ of the cusp from $\pi_\Phi$ attached to $\Phi$ (cf. Prop. \ref{eigcus}),
together with the analysis of the representation of $k^*$ that may arising from ${(\pi_\Phi)}_p$ by restriction to $R^*_p$.
Again from proposition \ref{eigcus}, we have that $S_i(1,N)$ consists of the direct sum of a certain number of Hecke
isotypical components ${S(1,N)}^\Phi$ of $S(1,N)$. More precisely,

${S(1,N)}^\Phi\subset S_1(1,N)$, if ${(\pi_\Phi)}_p$ is one--dimensional;

${S(1,N)}^\Phi\subset S_2(1,N)$, if ${(\pi_\Phi)}_p$ is two--dimensional;
\\furthermore, in the first case, we have that ${S(1,N)}^\Phi\subset S_0(1,N)$ precisely when the character
of $\qu_p^*$ describing ${(\pi_\Phi)}_p$ is unramified.
Another consequence of the same proposition is that

$S{(1,N)}^{\rm Eis}\subset S_1(1,N)$, and $S{(1,N)}^{\rm Eis}\cap S_0(1,N)=\ci\cdot e_{p-1}$,
\\where $e_{p-1}$ is the constant function on $D^*_A$ equal to $1$ (cf. Prop. \ref{eisen}).

Consider the decomposition
\begin{equation}\label{decomp1}S_1(1,N)={S(1,N)}^{\rm Eis}\bigoplus_{0< d |N}\left(\bigoplus_{N(\Phi)=d}{S_1(1,N)}^\Phi\right),
\end{equation}
where the inner sum ranges through all the eigensystems $\Phi$ arising from $S(1,N)$ and giving
rise to an infinite dimensional cusp form $\pi_\Phi$ whose local component at $p$ is one dimensional,
and whose prime--to--$p$ conductor is equal to the given positive divisor $d$ of $N$.

Taking dimensions on both sides of (\ref{decomp1}), and using Proposition \ref{eisen} and Lemma
\ref{locallem}, we get
$$u_1(p,N)=(p-1)+\sum_{0<d|N}t(N/d)A_1(p,d).$$
In an analogous way, since $S_0(1,N)\cap {S(1,N)}^{\rm Eis}$ is one--dimensional, we see that
$$u_0(p,N)=1+\sum_{0<d|N}t(N/d)A_0(p,d).$$
Finally, working with the module $S_2(1,N)$, we have
$$u_2(p,N)=2\sum_{0<d|N}t(N/d)A_2(p,d).$$
The proposition follows.
\end{proof}

The relations involving the functions $A_i(p,N)$ expressed by Proposition \ref{recrel} can be solved
for the $A_i(p,N)$. The inverse of $t$, with respect to the convolution product, is infact the square $(\mu*\mu)$
of the M\"obius function. A restatement of Proposition \ref{recrel} is then

\begin{prop}\label{relarico} For a prime $p$ and an integer $N\geq 1$ not divisible by $p$ we have

$A_0(p,N)=(\mu*\mu*u_0(p,\_))(N) - \mu(N);$

$A_1(p,N)=(\mu*\mu*u_1(p,\_))(N) - (p-1)\mu(N);$

$A_2(p,N)=(\mu*\mu*u_2(p,\_))(N)/2.$

\end{prop}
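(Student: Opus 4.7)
The plan is to invert Proposition \ref{recrel} by convolving each relation with the multiplicative function $\mu*\mu$, using the fact that $\mu*\mu$ is the convolution inverse of $t$. Since the three identities in the proposition have exactly the same shape, the same argument handles all of them simultaneously.

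First I would record the two elementary identities about multiplicative functions on the positive integers. Writing $\mathbf{1}$ for the constant function $N\mapsto 1$ and $\delta$ for the unit of the convolution product (so $\delta(1)=1$, $\delta(N)=0$ for $N>1$), one has $t=\mathbf{1}*\mathbf{1}$ (since $t(N)$ is the number of divisors of $N$) and the Möbius inversion formula $\mathbf{1}*\mu=\delta$. These two together give
\[
t*(\mu*\mu)=(\mathbf{1}*\mu)*(\mathbf{1}*\mu)=\delta*\delta=\delta,
\]
so $\mu*\mu$ is the convolution inverse of $t$. In particular, $(\mu*\mu)*\mathbf{1}=\mu*(\mathbf{1}*\mu)*\mathbf{1}$ simplifies similarly to $(\mu*\mu)*\mathbf{1}=\mu*\delta*\mathbf{1}\cdot\text{(rearranging)}$; more directly, $(\mu*\mu)*\mathbf{1}=\mu*(\mu*\mathbf{1})=\mu*\delta=\mu$.

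Next I would take each of the three equalities of Proposition \ref{recrel}, interpret the constants on the right-hand side ($1$, $p-1$, and $0$) as constant functions of $N$, and convolve both sides with $\mu*\mu$. For the first relation $(t*A_0(p,\_))(N)=u_0(p,N)-1$ I obtain
\[
A_0(p,\_)=\delta*A_0(p,\_)=(\mu*\mu)*u_0(p,\_)-(\mu*\mu)*\mathbf{1},
\]
and the computation above reduces the last term to $\mu$. This gives exactly $A_0(p,N)=(\mu*\mu*u_0(p,\_))(N)-\mu(N)$. The same manipulation, applied to the second relation with the constant function $(p-1)\mathbf{1}$ in place of $\mathbf{1}$, yields $A_1(p,N)=(\mu*\mu*u_1(p,\_))(N)-(p-1)\mu(N)$. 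For the third relation, the subtracted term is zero, so I just obtain $A_2(p,N)=(\mu*\mu*u_2(p,\_))(N)/2$ directly.

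There is essentially no obstacle: the content of the proposition is the abstract observation that $\mu*\mu$ inverts $t$, together with the collapse $(\mu*\mu)*\mathbf{1}=\mu$. The only mild care is in reading the scalars $1$ and $p-1$ on the right-hand sides of Proposition \ref{recrel} as constant functions of $N$ (so that the convolution notation makes sense), which is implicit in the original statement.
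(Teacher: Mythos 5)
Your proposal is correct and follows exactly the route the paper takes: the identities $t=\mathbf{1}*\mathbf{1}$, $\mathbf{1}*\mu=\delta$ (hence $t*(\mu*\mu)=\delta$ and $(\mu*\mu)*\mathbf{1}=\mu$), applied to invert the three relations of Proposition \ref{recrel}. The only blemish is the garbled intermediate sentence computing $(\mu*\mu)*\mathbf{1}$, but your ``more directly'' line there is the correct and complete argument.
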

Let $\delta$ be the multiplicative function whose value at every integer $n>1$ is zero, and so that $\delta(1)=1$.
In the restatement of Proposition \ref{recrel} given by Proposition \ref{relarico} we used the fact that the convolution
between $\mu$ and the constant function equal to $1$ is $\delta$, the identity for the convolution product.

To complete the proof of Theorem \ref{MT}, we will find an explicit expression for $u_i(p,N)$, for every prime $p>3$,
and every $N\geq 1$ not divisible by $p$. This task will be accomplished in section \ref{isoclasskstar}, where the isomorphism class
of the $k^*$--representation given by $S(1,N)$ will be determined.


\section{The functions $u_0$, $u_1$ and $u_2$}\label{isoclasskstar}

The double coset $\Omega(1,N)$ introduced in equation (\ref{doublecoset}) parametrizes
(in a non--canonical way) supersingular elliptic curves in characteristic $p$ with a certain extra
structure. By exploiting this correspondence, we explicitly determine, for any prime $p>3$ and any integer
$N\geq 1$ not divisible by $p$, the isomorphism class of the linear representation of $k^*$ given
by the complex space $S(1,N)$. This results in providing formulas for the functions $u_i(p,N)$, thus
completing the proof of Theorem \ref{MT}.

Any supersingular elliptic curve $E$ over $\bar k$ has a canonical and functorial model $E_0$
over $k$ specified by the requirement that the Frobenius endomorphism of $E_0$ relative to $k$
is equal to multiplication by $-p$ on the curve (cf. Thm. \ref{desc}). In particular we can deduce a $k$--structure on
the space ${\mathbf{t}_0(E)}^*$ of invariant $1$--forms, dual to the tangent space $\mathbf{t}_0(E)$
of $E$ at $0$, to which we will implicitly refer when speaking of an {\it invariant form on $E$ defined
over $k$}. The details of the next theorem are given in section \ref{isoclass} (cf. Thm. \ref{corr2}):

\begin{teo}\label{corr3} There is a correspondence between $\Omega(1,N)$ and the set of
isomorphism classes of triples $(E,\omega,x)$, where $E$ is a supersingular
elliptic curve over $\bar k$, $\omega$ is a nonzero invariant $1$--form on $E$
defined over $k$, and $x\in E[N](\bar k)$ is point of order $N$. The permutation
action of $k^*$ on $\Omega(1,N)$ is that induced by the twisted action
$$\lambda.(E,\omega,x)=(E,\lambda^p\omega,x)$$
by homotheties on the second entry of the triples considered.
\end{teo}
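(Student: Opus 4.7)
The plan is to build the correspondence in three layers, each refining the previous by remembering more structure, and then to verify the twist in the $k^*$-action by tracking how right translation on the adelic coset transports to the tangent space of the resulting curve.

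First, I would establish the basic Deuring--Eichler dictionary
$$D^*_\qu \backslash D^*_{\A_f} / \hat R^* \;\longleftrightarrow\; \{\text{supersingular } E/\bar k\}/\!\sim,$$
via the standard recipe: an adelic element $g$ determines a locally free right $R$-ideal $I_g$, and hence (viewing $I_g$ as a subsheaf of some fixed supersingular isogeny class) a supersingular curve $E_g$ up to $\bar k$-isomorphism. The double coset relation corresponds exactly to the equivalence of right ideals up to left $R$-multiplication, which is Deuring's bijection with isomorphism classes of supersingular elliptic curves in characteristic $p$.

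Second, I would refine at the primes $\ell \mid N$. Replacing each $R_\ell^* = \gl_2(\z_\ell)$ by the congruence subgroup $U_\ell(N)$ is the same as remembering the $U_\ell(N)$-orbit of a basis of $R_\ell$, which under the identification with the prime-to-$p$ Tate module $T^{(p)}(E_g) \simeq \prod_{\ell \neq p} R_\ell$ singles out the reduction mod $N$ of the second basis vector. Globally this packages into one point $x \in E_g[N](\bar k)$ of exact order $N$, compatibly with change of adelic representative, so the finer double coset $D^*_\qu \backslash D^*_{\A_f} / \bigl(\prod_{\ell \neq p} U_\ell(N) \times R_p^*\bigr)$ is in bijection with pairs $(E,x)$.

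Third, I would refine at $p$. The tangent space $\mathbf{t}_0(E_g)$ is a one-dimensional $\bar k$-space on which $R_p$ acts through $R_p \to R_p/\mathfrak{m} = k$, so that $R_p^*/R_p^*(1) = k^*$ acts simply transitively on nonzero vectors. By Theorem \ref{desc} there is a canonical descent of $E_g$ to $k$, hence a canonical $k$-structure on $\mathbf{t}_0(E_g)^*$, and choosing a nonzero invariant form $\omega$ defined over $k$ amounts to fixing one of the $k^*$-worth of nonzero vectors of this line. This is the information encoded in passing from the quotient by $R_p^*$ to the quotient by $R_p^*(1)$, and it yields the full correspondence with triples $(E,\omega,x)$.

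Finally, I would verify the twisted action $\lambda.(E,\omega,x) = (E, \lambda^p \omega, x)$. Right translation by $\lambda \in k^* = R_p^*/R_p^*(1)$ on the adelic coset transports, through the dictionary above, to an action on $\mathbf{t}_0(E_g)^*$. The key point is that the canonical $k$-model of Theorem \ref{desc} is pinned down by the requirement that the $k$-Frobenius acts as $-p$ on $E_g$, which forces the natural $R_p$-action on the tangent space to be Frobenius-semilinear rather than $k$-linear with respect to this canonical structure. Unwinding this semilinearity gives the exponent $p$. The main obstacle will be this last step: making precise enough identifications, between (a) right translation on the double coset, (b) the induced action on the ideal $I_g$ and hence on the quotient curve, and (c) the action on invariant differentials, to see that the naive action of $\lambda$ appears only after a Frobenius twist. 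I would carry this out by fixing a base supersingular curve $E_0$ with $\enn(E_0) \simeq R$ identified, computing the action of $R_p^*$ on $\mathbf{t}_0(E_0)^*$ in these coordinates, and comparing with the $k^*$-permutation action on $\Omega(1,N)$.
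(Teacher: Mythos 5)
Your three--layer plan follows essentially the same route as the paper: the ideal--theoretic Deuring correspondence (Theorems \ref{tau} and \ref{functor}), refined by level structure at the primes dividing $N$ via the Tate modules, and at $p$ via the reduction $R_p\rightarrow R_p/\mathfrak{m}=k$ acting on the cotangent line of the canonical $k$--model of Theorem \ref{desc}. The genuine gap is in the last step, which you rightly single out as the main obstacle but then misdiagnose. The action of $R_p=\enn_k(E)\otimes\z_p$ on ${\mathbf{t}_0(E_0)}^*$ is \emph{not} Frobenius--semilinear: an endomorphism defined over $k$ acts on the cotangent space at the origin $k$--linearly, through the surjective ring homomorphism $R_p\rightarrow k$, and the normalization $\pi_{E_0}=-p$ does not alter this. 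So ``unwinding the semilinearity'' produces no exponent $p$, and the computation you describe would end with the untwisted action. As Remark \ref{untwist} makes explicit, the twist is an artifact of normalization: the ideal--theoretic construction naturally produces the \emph{left} coset space $K(1,N)\backslash\hat D^*/D^*$ (because $I_\psi=\ho(E_\psi,E)$ is a \emph{left} ideal of $R$), and to land in $\Omega(1,N)=D^*_\qu\backslash D^*_\A/U(1,N)$ one applies the canonical involution $\gamma$ of $D$ (Remark \ref{dbi}). In the explicit matrix model of $R_p$ that anti--automorphism swaps $a$ and $a^\sigma$ on the diagonal and hence induces the Frobenius on the residue field $k=R_p/\mathfrak{m}$; this, and only this, is the source of $\lambda^p$.

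A second, related point: you start from right $R$--ideals and the quotient $D^*_\qu\backslash D^*_\A/\hat R^*$ directly. That is a legitimate alternative normalization, but then you must track carefully which of $\ho(E,E_\psi)$ or $\ho(E_\psi,E)$ you are using and on which side $D^*_\qu$ and $U(1,N)$ act; with your conventions the twist may simply be absent, which is consistent with Remark \ref{untwist} but is then not the statement as written. Two smaller issues: the identification $T^{(p)}(E_g)\simeq\prod_{\ell\neq p}R_\ell$ is wrong on ranks --- the correct input is $R_\ell\simeq\enn_{\z_\ell}(T_\ell(E))$ (Theorem \ref{tateendell}), and the point $x$ on the quotient curve must be transported through the lattice $\Lambda_{\psi_\ell}\supset T_\ell(E)$ and the chosen local generator $g_\ell$, as in the proof of Theorem \ref{corr2}. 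Likewise $\omega$ cannot merely be chosen on the base curve; it must be transported to each $E_\psi$ via the Dieudonn\'e--module identification $T_p(X)/FT_p(X)={\mathbf{t}_0(X)}^*$, and one must check that the result depends only on the coset $R_p^*(1)g_p$.
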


\begin{rem}\label{untwist} The introduction of the Frobenius--twist in the permutation action of $k^*$
as given in Theorem \ref{corr3} results from the actual way we set up the correspondence. The correspondence
can be normalized in a different way, and the twist can be removed. Observe nevertheless that the Frobenius--twisted permutation
action of $k^*$ on $\Omega(1,N)$ is isomorphic to the untwisted action, induced by $\lambda.(E,\omega,x)=(E,\lambda\omega,x)$.
This follows from the fact that there is a permutation action of $D_p^*/R_p^*(1)$ on $\Omega(1,N)$ extending that of $k^*$.
\end{rem}

The following elementary lemma on (not necessarily supersingular) elliptic curves over a field of
characteristic $p$ will be used in the sequel. Here the assumption $p>3$ plays a role.

\begin{lem}\label{wek} Let $E$ be an elliptic curve over a field $\kappa$ of characteristic $p$,
and let $W_E$ be the group of $\kappa$--automorphisms of $E$. If $p>3$, then the group homomorphism
$d_0:W_E\rightarrow\kappa^*$ describing the natural action of $W_E$ on the tangent space
$\mathbf{t}_0(E)$ (resp. the cotangent space ${\mathbf{t}_0(E)}^*$) of $E$ at $0$ is injective.
\end{lem}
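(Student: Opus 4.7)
The plan is to reduce the injectivity of $d_0$ to a bound on the degree of an auxiliary isogeny. Since $d_0$ is a group homomorphism, it suffices to prove that any $\phi \in W_E$ with $d_0\phi = 1$ equals the identity. Suppose $\phi \neq 1$: then $\phi - 1$ is a nonzero endomorphism of $E$, hence an isogeny. Its tangent map at $0$ is $d_0\phi - d_0(1) = 0$, so $\phi - 1$ is inseparable, and therefore $p \mid \deg(\phi - 1)$.

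Next I compute $\deg(\phi - 1)$ inside the endomorphism ring of $E$ via the standard identity $\deg(\phi - 1) = \deg\phi + 1 - \trace(\phi)$, where $\trace(\phi) = \phi + \bar\phi \in \z$ and $\bar\phi$ denotes the dual isogeny. Since $\phi$ is an automorphism, $\deg\phi = 1$, and the positivity of the degree form on $\enn(E)$ yields $|\trace(\phi)| \le 2\sqrt{\deg\phi} = 2$. Therefore
$$\deg(\phi - 1) = 2 - \trace(\phi) \in \{0, 1, 2, 3, 4\}.$$
Since $p > 3$ implies $p \ge 5$, the divisibility $p \mid \deg(\phi - 1)$ forces $\deg(\phi - 1) = 0$, whence $\phi = 1$. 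This is exactly the point where the hypothesis $p > 3$ is used, and it is the only place where it enters the argument.

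The cotangent version of the statement follows immediately from the tangent version: the natural action of $W_E$ on ${\mathbf{t}_0(E)}^*$ is the inverse of its action on $\mathbf{t}_0(E)$, so both characters $W_E \to \kappa^*$ have the same kernel. The main potential obstacle lies in the degree-trace identity and in the inequality $|\trace(\phi)| \le 2\sqrt{\deg\phi}$ when $\kappa$ is not algebraically closed; both facts, however, are standard, following by base change to $\bar\kappa$ and from the positive definiteness of the degree form on $\enn(E)$, after which the conclusion descends to $\kappa$ without change.
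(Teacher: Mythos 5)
Your proof is correct and follows essentially the same route as the paper: show $\phi-\mathrm{id}_E$ is inseparable (hence of degree divisible by $p$) and bound $\deg(\phi-\mathrm{id}_E)\le 4$, which is incompatible with $p>3$ unless the degree is zero. The paper bounds the degree via the Cauchy--Schwarz inequality for the degree form rather than the trace identity, but these are equivalent formulations of the same estimate.
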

\begin{proof} Let $\sigma\in W_E$ be such that the differential $d_0(\sigma)$ is equal to $1$.
Then by the linearity of the differential with respect to the addition on $E$ we have
$d_0({\rm id_E}-\sigma)=0$. Therefore ${\rm id_E}-\sigma$ is either zero or an inseparable
isogeny, in both cases $p$ divides the degree of ${\rm id_E}-\sigma$. For any pair of endomorphisms
$a$, $b$ of $E$ over $\kappa$, we have the Cauchy--Schwarz inequality (cf. \cite{Si}, III Cor. 6.3, V Lem. 1.2)
$$\deg(a+b)\leq \deg(a)+\deg(b)+2\sqrt{\deg(a)\deg(b)}.$$
In particular
$\deg({\rm id_E}-\sigma)\leq 1+1+2=4$, and its divisibility by $p>3$ implies that
$\deg({\rm id_E}-\sigma)$ is zero, and $\sigma={\rm id_E}$.
\end{proof}

\begin{rem}\label{injects}
If $E$ is a supersingular elliptic curve over $\bar k$, then from the lemma and from the existence of the functorial $k$--model $E_0$,
it follows that the action of the group $W_E$ of $\bar k$--automorphisms on ${\mathbf{t}_0(E_0)}^*$ is described by an injection
$W_E\hookrightarrow k^*$.
\end{rem}

Let $E$ be an elliptic curve over $\bar k$ with automorphism group $W_E$.
Since the characteristic $p$ of $k$ is assumed to be $>3$, we have that $W_E$ is cyclic
of order $4$, $6$, or $2$ according to whether $j_E$ is $1728$, $0$, or none of the
previous values respectively (cf.\cite{Si}, III \S 10). The next lemma indicates
when the two isomorphism classes of elliptic curves with extra automorphisms
are supersingular.

\begin{lem}\label{extraaut} If $p>3$, then $1728$ is a supersingular $j$--invariant if and only if
$p\equiv 3$ mod $4$, whereas $0$ occurs as a supersingular invariant
precisely when $p\equiv 2$ mod $3$.
\end{lem}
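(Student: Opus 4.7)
The plan is to invoke the classical Hasse invariant criterion and read off each congruence condition by a direct coefficient computation. Concretely, for a Weierstrass equation $y^2 = f(x)$ with $f$ a separable monic cubic in characteristic $p>3$, the associated elliptic curve is supersingular if and only if the coefficient of $x^{p-1}$ in $f(x)^{(p-1)/2}$ vanishes in the base field (cf. \cite{Si}, V Thm.~4.1). Since $j$-invariant determines the isomorphism class of the curve over $\bar k$, it is enough to test supersingularity on a single convenient model for $j=1728$ and $j=0$.

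For $j=1728$ I would use the curve $E:y^2=x^3-x$. Then
$$f(x)^{(p-1)/2}=(x^3-x)^{(p-1)/2}=x^{(p-1)/2}(x^2-1)^{(p-1)/2}=\sum_{k=0}^{(p-1)/2}(-1)^{(p-1)/2-k}\binom{(p-1)/2}{k}x^{(p-1)/2+2k},$$
so the coefficient of $x^{p-1}$ is nonzero precisely when there exists an integer $k$ with $(p-1)/2+2k=p-1$, i.e., $(p-1)/2$ is even. Hence $E$ is supersingular exactly when $(p-1)/2$ is odd, that is, $p\equiv 3\pmod 4$. (In that case the corresponding binomial coefficient is a central one and is nonzero mod $p$, so the test is conclusive.)

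For $j=0$ I would take $E:y^2=x^3+1$, so that
$$f(x)^{(p-1)/2}=(x^3+1)^{(p-1)/2}=\sum_{k=0}^{(p-1)/2}\binom{(p-1)/2}{k}x^{3k}.$$
The coefficient of $x^{p-1}$ is nonzero if and only if $3\mid p-1$, in which case it is $\binom{(p-1)/2}{(p-1)/3}\neq 0$ in $\f_p$. Therefore $E$ is supersingular exactly when $p\not\equiv 1\pmod 3$, i.e., $p\equiv 2\pmod 3$ (using $p>3$ to exclude the ramified prime).

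There is no real obstacle here; the only thing to be careful about is that the binomial coefficient appearing when the divisibility condition is satisfied is genuinely nonzero modulo $p$, which is immediate since it is a binomial coefficient $\binom{(p-1)/2}{k}$ with $0\le k\le (p-1)/2<p$. An alternative, and conceptually more appealing, route would be to note that the curves $y^2=x^3-x$ and $y^2=x^3+1$ have complex multiplication by $\z[i]$ and $\z[\zeta_3]$ respectively, and apply Deuring's criterion (a CM elliptic curve has supersingular reduction at $p$ iff $p$ is nonsplit in the CM field); this yields the same congruence conditions, using that $p$ is inert in $\qu(i)$ iff $p\equiv 3\pmod 4$ and in $\qu(\sqrt{-3})$ iff $p\equiv 2\pmod 3$, while $p>3$ prevents ramification.
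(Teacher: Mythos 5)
Your proof is correct, but it takes a genuinely different route from the paper's. You verify supersingularity directly via the Hasse--invariant criterion (vanishing of the coefficient of $x^{p-1}$ in $f(x)^{(p-1)/2}$) on the explicit models $y^2=x^3-x$ and $y^2=x^3+1$; both coefficient computations are carried out correctly, and you rightly note the key point that the surviving binomial coefficient $\binom{(p-1)/2}{k}$ with $0\le k\le (p-1)/2<p$ cannot vanish mod $p$, which is what upgrades the divisibility condition to an if-and-only-if. The paper instead argues through the endomorphism algebra: the order--$4$ (resp.\ order--$6$) automorphism gives an embedding $\z[i]\hookrightarrow\enn_{\bar k}(E)$ (resp.\ $\z[\rho]$ for a primitive sixth root of unity $\rho$), so if $E$ is supersingular then $\qu(i)$ embeds in the quaternion algebra ramified at $p$ and hence is nonsplit at $p$, while if $E$ is ordinary the unique subgroup $C$ of order $p$ yields a nontrivial ring homomorphism $\z[i]\rightarrow\enn(C)=\f_p$, forcing $p$ to split (ramification being excluded since $p>3$). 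This is essentially the ``alternative route'' you sketch in your last sentence, except that the paper proves the needed direction of Deuring's criterion inline rather than quoting it. Your primary argument is more elementary and self-contained, at the cost of choosing Weierstrass models; the paper's is model-free and meshes with the quaternionic framework and the automorphism groups $W_E$ that drive the rest of its Section 5.
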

\begin{proof} Let $E$ be an elliptic curve over $\bar k$ with $j_E=1728$, and let $i\in\bar\qu$
be a primitive fourth root of unity. There exists an injective ring homomorphism
$\z[i]\hookrightarrow\enn_{\bar k}(E)$, sending $i$ to one of the two automorphisms of $E$ of order $4$,
we can deduce an embedding $\qu(i)\subset\enn_{\bar k}(E)\otimes\qu$. If $E$ is supersingular,
then $\enn_{\bar k}(E)\otimes\qu$ is a quaternion algebra over $\qu$ that is ramified at $p$,
therefore $\qu(i)\otimes\qu_p$ is a field, equivalently $\qu(i)$ is not split at $p$. On the other
hand if $E$ is ordinary, then $E(\bar k)$ has a (unique) subgroup $C$ of order $p$ and we can
deduce a non trivial ring homomorphism $\z[i]\rightarrow\enn(C)=\f_p$. Which is to say that $p$ splits
in $\qu(i)$, since it cannot ramify for $p>3$. Since $p$ splits in $\qu(i)$ if and only if $p\equiv$ 1 mod $4$ we obtain the first
part of the lemma.

The second part is analogous, one has to replace the fourth root of unity $i$ by a primitive sixth
root of unity $\rho\in\bar\qu$.
\end{proof}

Let now $E$ be a supersingular elliptic curve over $\bar k$. If $N$ is any integer $\geq 1$ with $p\nmid N$,
define $\Sigma_E(1,N)$ to be the set of isomorphisms classes of pairs $(\omega,x)$, where $\omega$ is a
nonzero invariant $1$--form on $E$ defined over $k$, and $x$ is a $\bar k$--valued point of $E$ of order $N$.
Two such pairs $(\omega, x)$ and $(\eta, y)$ are isomorphic if there exists $u\in W_E$ such that
$u.(\omega, x):=(u^*\omega,u(x))$ is equal to $(\eta,y)$. Let moreover $S_E(1,N)$ be the set of complex
valued functions on $\Sigma_E(1,N)$.

The group $k^*$ permutes the elements of $\Sigma_E(1,N)$, the action is induced by
$$\lambda.(\omega,x)=(\lambda\omega,x).$$
The space $S_E(1,N)$ is therefore a linear representation of $k^*$ via the formula
$$\lambda.\varphi(\omega,x)=\varphi(\lambda.(\omega,x)).$$
It follows from Theorem \ref{corr3} that there is a decomposition
$$S(1,N)=\bigoplus_{E}S_E(1,N),$$
where the sum ranges through the isomorphism classes of supersingular elliptic curves $E$
over $\bar k$.

Consider now the set $\Sigma_E(N)$ of isomorphisms classes of $\bar k$--valued points of $E$
of order $N$, by which we mean the set of $W_E$--orbits given by points of $E$ of order $N$.
Let $\alpha_1,\ldots,\alpha_r\in E[N](\bar k)$ be a system of representatives of $\Sigma_E(N)$,
and let $e_1,\ldots,e_r$ be the cardinalities of the subgroups of $W_E$ given by the respective
stabilizers. Notice that each $e_i$ divides the order of $k^*$, since $W_E$ injects into
$k^*$ (cf. Rmk. \ref{injects}).

If $d$ is an integer dividing $|k^*|=p^2-1$ then set
$$I_d={\rm Ind}_{C_d}^{k^*}1,$$
where $C_d$ is the subgroup of $k^*$ of order $d$, and $1$ denotes
its trivial, one--dimensional complex representation.

\begin{lem}\label{actionE} There is an isomorphism
$$S_E(1,N)\simeq\bigoplus_{1\leq i\leq r} I_{e_i}.$$
\end{lem}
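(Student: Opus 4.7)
The plan is to exhibit an isomorphism of $k^*$--sets $\Sigma_E(1,N)\simeq\bigsqcup_{i=1}^r k^*/C_{e_i}$; passing to $\ci$--valued functions will then immediately give the desired decomposition of $S_E(1,N)$, since by definition $I_d=\ci[k^*/C_d]$ as a $k^*$--representation.

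First I would choose a nonzero $k$--rational invariant $1$--form $\omega_0$ on the functorial $k$--model $E_0$ of $E$. Since ${\mathbf t}_0(E_0)^*$ is one--dimensional over $k$, every nonzero $k$--rational invariant form is uniquely of the shape $\lambda\omega_0$ with $\lambda\in k^*$, giving a $k^*$--equivariant bijection between the set of such forms and $k^*$. Under this identification, the pairs $(\omega,x)$ entering the definition of $\Sigma_E(1,N)$ are identified with elements of $k^*\times E[N]_{\mathrm{ex}}$, where $E[N]_{\mathrm{ex}}$ denotes the set of $\bar k$--points of exact order $N$. By Remark \ref{injects}, for each $u\in W_E$ the pullback $u^*$ acts on ${\mathbf t}_0(E_0)^*$ as multiplication by a scalar $\chi(u)\in k^*$, with $\chi:W_E\hookrightarrow k^*$ injective; so in these coordinates the $W_E$--action on pairs becomes $u.(\lambda,x)=(\chi(u)\lambda,u(x))$, while the $k^*$--action is $\mu.(\lambda,x)=(\mu\lambda,x)$. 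These two actions commute, so the $k^*$--action descends to $\Sigma_E(1,N)=W_E\backslash(k^*\times E[N]_{\mathrm{ex}})$.

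Next I would analyze the orbit structure one $W_E$--orbit of $N$--torsion points at a time. With $\alpha_1,\ldots,\alpha_r$ representing the $W_E$--orbits on $E[N]_{\mathrm{ex}}$ and stabilizers $H_i\subset W_E$ of order $e_i$, every $W_E$--orbit in $k^*\times E[N]_{\mathrm{ex}}$ meets the slice $k^*\times\{\alpha_i\}$ for exactly one $i$, and two elements $(\lambda,\alpha_i),(\mu,\alpha_i)$ of this slice lie in the same $W_E$--orbit if and only if there is $u\in H_i$ with $\mu=\chi(u)\lambda$, i.e.\ $\mu\in\chi(H_i)\cdot\lambda$. Because $k^*$ is cyclic and $\chi$ is injective, $\chi(H_i)$ is the unique subgroup of $k^*$ of order $e_i$, namely $C_{e_i}$. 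Consequently the $W_E$--orbits meeting this slice are in $k^*$--equivariant bijection with $k^*/C_{e_i}$, where the residual $k^*$--action is left multiplication.

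Summing over $i$ produces the asserted bijection of $k^*$--sets $\Sigma_E(1,N)\simeq\bigsqcup_{i=1}^r k^*/C_{e_i}$, which gives $S_E(1,N)\simeq\bigoplus_i\ci[k^*/C_{e_i}]=\bigoplus_i I_{e_i}$. The only delicate point is the promotion of the $W_E$--action on the cotangent line to an injective character $\chi:W_E\hookrightarrow k^*$, which is what lets us transport the $W_E$--action onto the $\lambda$--coordinate; this is exactly what the functorial $k$--model $E_0$ of Theorem \ref{desc} together with Lemma \ref{wek} supply via Remark \ref{injects}. With this input in hand, the remainder of the proof is a purely combinatorial orbit count on the $W_E$--set $k^*\times E[N]_{\mathrm{ex}}$.
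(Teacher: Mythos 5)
Your argument is correct and follows essentially the same route as the paper's proof: both identify $W_E$ with a subgroup of $k^*$ via its action on the cotangent line (Lemma \ref{wek} and Remark \ref{injects}), reduce to computing the stabilizer of a class $(\omega,\alpha_i)$ under the $k^*$--action, and find it to be the subgroup of order $e_i$, yielding $S_E(1,N)\simeq\bigoplus_i I_{e_i}$. Your slice--by--slice bookkeeping of $W_E$--orbits on $k^*\times E[N]_{\mathrm{ex}}$ is just a more explicit rendering of the same orbit--stabilizer count.
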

\begin{proof} Using lemma \ref{wek}, the group $W_E$ will be identified with a subgroup of $k^*$.
Consider the permutation action of $k^*$ on $\Sigma_E(1,N)$. Any $k^*$--orbit
can certainly be represented by elements of the form $(\omega,\alpha_i)$. Moreover,
if $(\omega,\alpha_i)$ and $(\eta,\alpha_j)$ are in the same orbit then $i=j$.

In order to prove the lemma it is then enough to show that the stabilizer of
any class of $\Sigma_E(1,N)$ represented by $(\omega,\alpha_i)$ is the subgroup of
$k^*$ of order $e_i$. This follows readily from the fact that for $\lambda\in k^*$ we have that
$(\lambda\omega,\alpha_i)$ defines the same element as $(\omega,\alpha_i)$ does
if and only if there is an automorphism $u\in W_E$ such that
$$u.(\lambda\omega,\alpha_i)=(\omega,\alpha_i).$$
Therefore $u\alpha_i=\alpha_i$ and $\lambda$ has to belong to the subgroup
of $k^*$ given by the subgroup of $W_E$ stabilizing $\alpha_i$.
\end{proof}

If $E$ is any elliptic curve over $\bar k$, and $N$ any integer not divisible by $p$, we compute
the integers $e_1,\ldots, e_r$. Let $\psi(N)$ be the number of elements of ${(\z/N\z)}^2$ of order $N$.

\begin{lem}\label{ei} The sequence of integers $e_1,\ldots, e_r$ is given, up to permutation, by the
following table:

\begin{tabular}{c|cccc}
&$|W_E|=2$&$|W_E|=4$&$|W_E|=6$\\
\hline
$N=1$&$e_1=2$&$e_1=4$&$e_1=6$\\
$N=2$&$e_i=2$, $r=3$&$e_1=2$, $e_2=4$&$e_1=2$\\
$N=3$&$e_i=1$, $r=4$&$e_i=1$, $r=2$&$e_1=1$, $e_2=3$\\
$N>3$&$e_i=1$, $r=\psi(N)/2$&$e_i=1$, $r=\psi(N)/4$&$e_i=1$, $r=\psi(N)/6$\\
\end{tabular}
\end{lem}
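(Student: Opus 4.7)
The plan is to unpack Lemma \ref{ei} into a case analysis on the pair $(|W_E|, N)$, by reading off the orbit structure of $W_E$ acting on the $\bar k$-points of $E$ of exact order $N$. Since $W_E$ is cyclic of order $n \in \{2,4,6\}$, its subgroups are the unique $H_d$ for every $d \mid n$, and the stabilizer of any point is one such $H_d$. A point $P$ is fixed by $H_d$ precisely when it is fixed by a generator $\sigma_d$ of $H_d$, i.e.\ when $P \in \ker(\sigma_d - 1)$; so the whole analysis reduces to locating these kernels inside $E[N]$.

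For the sizes of the kernels, I would use the embeddings $\z[i] \hookrightarrow \enn(E)$ (for $|W_E|=4$) and $\z[\zeta_6] \hookrightarrow \enn(E)$ (for $|W_E|=6$) coming from the extra automorphisms, together with the identity $\deg(\alpha) = \norm_{\qu(\zeta_m)/\qu}(\alpha)$ for $\alpha$ in the quadratic imaginary subring. A direct norm computation gives $\deg(\sigma_d - 1) = 4, 2, 3, 1$ for $\sigma_d$ of order $2, 4, 3, 6$ respectively. Crucially, from the factorizations $2 = -(i-1)(i+1)$ and $3 = (\zeta_3-1)(\zeta_3^2-1)$ one sees $\ker(i-1) \subseteq E[2]$ and $\ker(\zeta_3-1) \subseteq E[3]$, while $\ker(-1-1) = E[2]$ and $\ker(\zeta_6 - 1) = 0$. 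Since $p > 3$, all these isogenies are \'etale and their kernels have the expected cardinality. The upshot is that only $E[2]$ and $E[3]$ can carry non-trivial stabilizers.

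The combinatorial step is now straightforward in each cell of the table. For $N=1$, the zero section is fixed by all of $W_E$ and forms a single orbit with $e_1 = |W_E|$. For $N=2$, the three non-zero points of $E[2]$ are all fixed by $-1$; in the $|W_E|=4$ case, exactly one of them lies in $\ker(i-1)$, giving an orbit of size $1$ with stabilizer $H_4$ and leaving the other two to form one orbit of size $2$ with stabilizer $H_2$; in the $|W_E|=6$ case, $\ker(\zeta_3-1) \subseteq E[3]$ meets $E[2]$ only in $0$, so $W_E/H_2$ of order $3$ acts freely on the three non-zero $2$-torsion points, yielding a single orbit of size $3$ with stabilizer $H_2$. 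For $N=3$, the fixed loci of $-1$ and $i$ lie in $E[2]$ and miss the non-zero $3$-torsion entirely; only the $|W_E|=6$ case sees a non-trivial stabilizer, where the two non-zero points of $\ker(\zeta_3-1)$ form one orbit of size $2$ with stabilizer $H_3$, while the remaining six $3$-torsion points form one free orbit of size $6$. For $N>3$, no point of exact order $N$ belongs to $E[2] \cup E[3]$, so no non-identity element of $W_E$ can fix it; the action is free and the orbit count is $\psi(N)/|W_E|$, with each $e_i = 1$.

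The main obstacle is, as usual in small cases, the sub-cases $N \in \{2,3\}$ with $|W_E| \in \{4,6\}$, where the orbit structure depends on the precise location of the small kernels $\ker(i-1)$ and $\ker(\zeta_3-1)$ inside $E[2]$ and $E[3]$. These are pinned down exactly by the norm computations and the factorizations above, together with the \'etaleness of the relevant isogenies, which makes the cardinalities match the degrees.
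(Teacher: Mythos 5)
Your argument is correct and fills in every cell of the table, but it follows a genuinely different route from the paper's. The paper treats the column $|W_E|=2$ exactly as you do (inversion fixes $E[2]$ pointwise and acts freely on points of order $>2$), but for the curves with extra automorphisms it works on the $\ell$-adic Tate modules: a generator $\sigma$ of order $4$ is represented by a matrix with characteristic polynomial $x^2+1$, whose reduction mod $\ell$ has no eigenvalue $1$ for $\ell\neq 2$, while at $\ell=2$ a change of basis brings the matrix to $\left(\begin{smallmatrix}0&-1\\1&0\end{smallmatrix}\right)$, from which the fixed points on $E[2^n]$ are read off; the order-$6$ column is declared ``similar'' and omitted. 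You instead exploit the CM orders $\z[i]$ and $\z[\zeta_6]$ inside $\enn(E)$ and compute $|\ker(\sigma-1)(\bar k)|=\deg(\sigma-1)=\norm(\sigma-1)$, locating the fixed loci inside $E[2]$ and $E[3]$ via $2=-(i-1)(i+1)$ and $3=(\zeta_3-1)(\zeta_3^2-1)$; since the degrees $4,2,3,1$ are prime to $p>3$, the kernels are \'etale and their $\bar k$-point counts equal the degrees (alternatively, separability of $\sigma-1$ is exactly the content of Lemma \ref{wek}). Your approach buys a uniform treatment that avoids the conjugacy analysis at $\ell=2$ and, unlike the paper, carries out the $|W_E|=6$ case in full; the paper's Tate-module computation is more hands-on and generalizes to situations where no convenient norm form is available, but as written it leaves the hexagonal case to the reader. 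The orbit bookkeeping in your final step (orbit--stabilizer plus the intersection $E[2]\cap E[3]=0$) is complete and matches the table entry by entry.
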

\begin{proof} If $E$ does not have extra automorphisms, then the action of
$W_E$ on $E[N](\bar k)$ is by inversion, and the first column of the table is readily established,
taking in account that $\psi(2)=3$ and that $\psi(3)=8$.

If $|W_E|=4$, and $\ell$ is a prime $\neq p$, then a generator $\sigma\in W_E$ acts on the
$\ell$--adic Tate module $T_\ell(E)$ of $E$ via a $\z_\ell$--linear, order $4$ automorphism $\sigma_\ell$.
Let $m=m_{\sigma_\ell}\in\gl_2(\z_\ell)$ be the matrix representing $\sigma_\ell$ with
respect to the choice of a $\z_\ell$--basis of $T_\ell(E)$.
Since $m$ satisfies the polynomial $x^2+1$ and it cannot act a scalar (for $\enn_{\bar k}(E)$
injects into $\enn_{\z_\ell}(T_\ell(E))$), we see that $x^2+1$ is its characteristic polynomial.
If $\ell\neq 2$, then the roots of the reduction mod $\ell$ of $x^2+1$ are distinct, therefore if $N$ is
not a power of $2$ other than $1$, the group $W_E$ acts without fixed points on the $\bar k$--valued points
of $E$ of order $N$.

Assume then that $\ell=2$, and let $m$ be given by
$$m=\begin{pmatrix}
a&b\\
c&d\\
\end{pmatrix}.$$
The mod $2$ reduction $\bar m$ of $m$ cannot be trivial, for $\ell$ dividing $1-m$ would
prevent $m$ from having order $2$, and $\bar m$ has order $2$. It follows that there is a basis
$(e_1,e_2)$ of $T_\ell(E)$ such that the matrix $m$ representing $\sigma_\ell$ has the entries
$a$ and $d$ both divisible by $2$. This implies that the pair $(e_1,\sigma_\ell\cdot e_1)$ is also
a basis of $T_\ell(E)$, thus, up to conjugation, we can assume $m$ be given by
$$m=\begin{pmatrix}
0&-1\\
1&0\\
\end{pmatrix}.$$
It now readily follows that $\sigma_\ell$ fixes a point of order $N=2^n$ if and only if $n<2$,
moreover, the action of $W_E$ on points of order $2$ is specified by the integers
$e_1=4$ and $e_2=2$. This complete the proof of the statements in the second
column of the table.
The case $|W_E|=6$ can be treated similarly and the details are omitted.
\end{proof}

\begin{rem} If $\tilde E$ is one of the two elliptic curves over $\bar\qu_p$ with extra automorphisms, then
$\tilde E$ is known to have good reduction, since $j_{\tilde E}$ is an algebraic integer (an integer, in fact). The
special fiber $\tilde E_p$ at $p$ is an elliptic curve over $\bar k$ with extra automorphisms, and the
group ${\rm Aut}(\tilde E)$ is identified with ${\rm Aut}(\tilde E_p)$
in a natural way (recall that $p>3$). For $\ell\neq p$, the Tate modules $T_\ell(\tilde E)$ and
$T_\ell(\tilde E_p)$ are isomorphic as modules over ${\rm Aut}(\tilde E)\simeq{\rm Aut}(\tilde E_p)$
and one could have proved Lemma \ref{ei} by reducing the calculation to the complex case,
where the study of a simple picture leads to the computation of the integers $e_i$ in the table.
\end{rem}

The number of supersingular $j$--invariants characteristic $p>3$ is given by a well--known formula
due to Eichler and Deuring (cf. \cite{Gr}, \S 1):

\begin{tabular}{c|cccc}
$p$ mod $12$&$1$&$5$&$7$&$11$\\
\hline
$h$&$(p-1)/12$&$(p+7)/12$&$(p+5)/12$&$(p+13)/12$\\
\end{tabular}

This formula, together with Lemmas \ref{extraaut}, \ref{actionE} and \ref{ei}, yields:
\begin{prop}\label{explicit} The isomorphism class of the linear representation $S(1,N)$
of $k^*$ is specified by the following table:

\begin{tabular}{c|cccc}
$p$ ${\rm mod}$ $12$&$1$&$5$&$7$&$11$\\
\hline
$N=1$&$\dfrac{p-1}{12} I_2$&$\dfrac{p-5}{12} I_2+I_6$&$\dfrac{p-7}{12} I_2+I_4$&$\dfrac{p-11}{12} I_2+I_6+I_4$\\
$N=2$&$\dfrac{p-1}{4} I_2$&$\dfrac{p-1}{4} I_2$&$\dfrac{p-3}{4} I_2+I_4$&$\dfrac{p-3}{4} I_2+I_4$\\
$N=3$&$\dfrac{p-1}{3} I_1$&$\dfrac{p-2}{3} I_1+I_3$&$\dfrac{p-1}{3} I_1$&$\dfrac{p-2}{3} I_1+I_3$\\
$N>3$&$\psi(N)\dfrac{p-1}{24} I_1$&$\psi(N)\dfrac{p-1}{24} I_1$&$\psi(N)\dfrac{p-1}{24} I_1$&$\psi(N)\dfrac{p-1}{24} I_1$\\
\end{tabular}

Moreover, we have:

$u_0(p,N)=\psi(N)\dfrac{p-1}{24}+\dfrac{\Delta(p,N)}{p-1}$;

$u_1(p,N)=\psi(N)\dfrac{{(p-1)}^2}{24}+\Delta(p,N)$;

$u_2(p,N)=\psi(N)\dfrac{{(p-1)}^2p}{24}-\Delta(p,N)$.
\end{prop}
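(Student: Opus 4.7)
The plan is to first establish the table giving the isomorphism class of $S(1,N)$ as a $k^*$-representation, and then extract the formulas for $u_0$, $u_1$, $u_2$ from it.

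For the first part, I start from Theorem \ref{corr3}, which gives a $k^*$-equivariant decomposition
\[
S(1,N) = \bigoplus_E S_E(1,N)
\]
indexed by isomorphism classes of supersingular elliptic curves $E$ over $\bar k$. Lemma \ref{actionE} expresses each summand as $S_E(1,N) \simeq \bigoplus_i I_{e_i}$, and Lemma \ref{ei} specifies the $e_i$ explicitly in terms of $|W_E|$ and $N$. Lemma \ref{extraaut} tells me, according to $p \bmod 12$, which of $j=0$ and $j=1728$ is supersingular, and the Eichler--Deuring formula gives the total count $h$. Subtracting the exceptional $j$-invariants from $h$ yields the number of curves with $|W_E|=2$, and summing the contributions for each possible value of $|W_E|$ produces each cell of the table. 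This step is a straightforward case-by-case bookkeeping over the sixteen cells indexed by $p \bmod 12$ and $N \in \{1,2,3,{>}3\}$.

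For the second part, I read off the $u_i$ using that each $I_d = \mathrm{Ind}_{C_d}^{k^*} 1$ is the sum of the $(p^2-1)/d$ characters of $k^*$ trivial on $C_d$. The trivial character appears exactly once in every $I_d$, so $u_0(p,N)$ equals the total number of $I_d$-summands in the table; inspection yields $u_0 = \psi(N)(p-1)/24 + \Delta(p,N)/(p-1)$. For $u_1$, the tame characters---those with $\eta=\eta^p$---are precisely the pullbacks from $\f_p^*$ under the reduced norm, whose kernel in $k^*$ is the unique subgroup $C_{p+1}$ of order $p+1$. Hence the contribution of $I_d$ to $u_1$ is $(p-1)\gcd(d,p+1)/d$, and the key observation is that every $d$ appearing with nonzero multiplicity in the table actually satisfies $d \mid p+1$. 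This is the synergy between Lemmas \ref{extraaut} and \ref{ei}: the induced summand $I_4$ occurs only when $|W_E|=4$ is realized, which forces $p \equiv 3 \bmod 4$ and hence $4 \mid p+1$; analogously for $I_3$ and $I_6$ when $p \equiv 2 \bmod 3$. Thus each $I_d$ actually appearing contributes the full $p-1$ tame characters, yielding $u_1 = (p-1)\,u_0 = \psi(N)(p-1)^2/24 + \Delta(p,N)$.

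For $u_2$, I use $u_2 = \dim S(1,N) - u_1$, computing $\dim S(1,N)$ by summing $(p^2-1)/d$ weighted by the multiplicities in the table; simplification then produces $u_2 = \psi(N)p(p-1)^2/24 - \Delta(p,N)$. The main obstacle I expect is not a conceptual one but the systematic bookkeeping in the first part, since the sixteen cells of the table must each be verified by combining the Eichler--Deuring count with Lemma \ref{ei} while keeping track of the auxiliary functions $\psi$ and $\Delta$. Once the table is in place, the extraction of the closed formulas for the $u_i$ reduces to elementary manipulations with cyclic-group characters, thanks to the uniform "$d \mid p+1$" phenomenon noted above.
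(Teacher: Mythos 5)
Your proposal is correct and follows the paper's own route: the table is assembled from Theorem \ref{corr3}, Lemmas \ref{extraaut}, \ref{actionE}, \ref{ei} and the Eichler--Deuring count, and the $u_i$ are then read off by elementary character theory for the cyclic group $k^*$ (the paper leaves this last step as an unwritten ``elementary but lengthy calculation''). Your observation that every $d$ occurring in the table divides $p+1$, so that each $I_d$ contributes exactly one trivial character and exactly $p-1$ characters with $\eta=\eta^p$, giving $u_1=(p-1)u_0$ and $u_2=\dim S(1,N)-u_1$, is a clean and valid way to carry out that computation.
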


The second part of the proposition follows from the table in the first part, after an elementary
(but a bit lengthy) calculation. The error term $\Delta(p,N)$ was defined in section \ref{statementteo}.

By plugging the expressions for the functions $u_i$ obtained in Proposition \ref{explicit} in
the recurrence relation given by Poposition \ref{recrel}, we see that the proof of Theorem \ref{MT}
is complete, once that the equality $(\mu*\mu*\psi)=r$ is verified.

\section{An isogeny class of supersingular elliptic curves}\label{isoclass}


Let $p$ be a prime number, and $k$ a finite field with $p^2$ elements.
%
%
The Honda--Tate theory of abelian varieties over finite fields guarantees the existence
of a $k$--isogeny class $\mathcal{C}_{-p}$ of elliptic curves over $k$ whose objects $A$
are those for which the equality
\begin{equation}\label{eq0}\pi_A=-p
\end{equation}
holds in the ring $\enn_k(A)$ (cf. \cite{T}, Th\'eor\`eme $1$). Here $\pi_A:A\rightarrow A$ is
the geometric Frobenius endomorphism of $A$ relative to $k$, i.e., $\pi_A$ is the identity on
the topological space underlying $A$, and is given by $s\rightarrow s^{|k|}$ on sections.
Furthermore, for any such $A$ the division algebra $\enn_k(A)\otimes\qu$ is ``the'' $\qu$--quaternion
ramified at $p$ and infinity (cf. Tate, loc. cit.), and the elliptic curve $A$ is {\it supersingular}. In fact
any supersingular elliptic curve over an algebraic closure $\bar k$ of $k$ admits a canonical and
functorial descent to $\mathcal{C}_{-p}$. It can be shown that (cf. \cite{BGP}, Lemma 3.21
for a brief sketch of the proof):

\begin{teo}\label{desc} The base extension functor $A\rightarrow A\otimes_k\bar k$ induces an equivalence of
categories between $\mathcal{C}_{-p}$ and the isogeny class of supersingular elliptic curves over $\bar k$.
\end{teo}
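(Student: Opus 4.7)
The plan is to verify separately that the base-change functor is fully faithful and essentially surjective; throughout, the crucial feature is that for any $A \in \mathcal{C}_{-p}$ the Frobenius $\pi_A = -p$ is a central integer in $\enn_{\bar k}(A_{\bar k})$, which renders Galois descent essentially automatic.

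For \emph{full faithfulness}, faithfulness is immediate since a $k$-morphism of $k$-varieties is determined by its effect on $\bar k$-valued points. Fullness follows from Galois descent: the image of
$$\ho_k(A,B) \longrightarrow \ho_{\bar k}(A_{\bar k}, B_{\bar k})$$
consists of those $\bar k$-morphisms $f$ satisfying $\pi_B \circ f = f \circ \pi_A$. For $A,B \in \mathcal{C}_{-p}$ both sides of this equation reduce to $(-p) f$, so every $\bar k$-morphism descends to $k$.

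For \emph{essential surjectivity}, fix a supersingular elliptic curve $E$ over $\bar k$ and pick any $A \in \mathcal{C}_{-p}$ (nonempty by Honda--Tate, as recalled in the setup). Then $A_{\bar k}$ is supersingular, hence $\bar k$-isogenous to $E$; choose an isogeny $\phi\colon A_{\bar k} \to E$ and let $K_{\bar k} = \ker \phi$. Since $\pi_A$ equals multiplication by the integer $-p$, it sends every subgroup scheme of $A_{\bar k}$ into itself; in particular $K_{\bar k}$ is stable under the Galois action on $A_{\bar k}$ (which factors through $\pi_A$), and therefore descends to a subgroup scheme $K \subset A$ over $k$. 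Setting $A' = A/K$ yields a $k$-elliptic curve with $(A')_{\bar k} \cong A_{\bar k}/K_{\bar k} \cong E$, and its Frobenius is induced from $\pi_A = -p$ by the universal property of the quotient, so $A' \in \mathcal{C}_{-p}$.

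The main obstacle I would expect is the essential surjectivity step, specifically the automatic descent of $\ker\phi$ from $\bar k$ down to $k$. This is precisely where the hypothesis $\pi_A = -p$ does the work: for a less rigid Weil number the Galois action on torsion would not act as a scalar, and descent of an arbitrary isogeny kernel would face genuine cohomological obstructions. Once the centrality of $\pi_A$ is in hand, the verification that $A'$ lies in $\mathcal{C}_{-p}$ and the matching of Hom-groups are both formal.
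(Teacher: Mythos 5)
The paper does not actually prove Theorem \ref{desc}; it defers to \cite{BGP}, Lemma 3.21. Your strategy is the standard one for this statement: full faithfulness from the identification $\ho_k(A,B)=\{f\in\ho_{\bar k}(A_{\bar k},B_{\bar k}): f\circ\pi_A=\pi_B\circ f\}$, which is vacuous once $\pi_A=\pi_B=-p$ is a central integer, and essential surjectivity by descending the kernel of an isogeny $\phi\colon A_{\bar k}\to E$. The full faithfulness half and the verification that $\pi_{A/K}=-p$ are correct.

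The weak point is the descent of $K_{\bar k}=\ker\phi$. Your justification --- ``$\pi_A=-p$ preserves every subgroup scheme, and the Galois action factors through $\pi_A$'' --- only controls the \emph{\'etale} part of $K_{\bar k}$, i.e.\ its group of $\bar k$-points, on which the arithmetic Frobenius indeed acts as the automorphism $-p$ (the order of the \'etale part is prime to $p$ since $A$ is supersingular). If $p\mid\deg\phi$, the kernel has a nontrivial connected part, which has a single $\bar k$-point, and there the argument says nothing: Galois stability of a non-reduced subgroup scheme means $\sigma^*K_{\bar k}=K_{\bar k}$ as closed subschemes, and stability under multiplication by $-p$ does not imply this (for instance, if $K_{\bar k}\subset A[p]$ then $(-p)K_{\bar k}=0$). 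The gap is real but easily repaired in either of two ways: (i) note that the connected finite subgroup schemes of a supersingular elliptic curve are exactly the iterated Frobenius kernels $\ker F^n$ --- this is recorded in section \ref{endring} of the paper via the Dieudonn\'e module computation --- and these are visibly defined over $k$, so the local and \'etale factors of $K_{\bar k}$ descend separately; or (ii) choose $\phi$ of degree prime to $p$ in the first place, which is possible because any two supersingular curves over $\bar k$ are linked by a prime-to-$p$ isogeny, after which your pointwise Galois argument suffices. With that supplement the proof is complete and agrees with the argument sketched in the cited reference.
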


In this section we explain how isomorphism classes of objects of $\mathcal{C}_{-p}$, equipped with
some extra structure, are parametrized by a certain double coset arising from the adelic points of
the multiplicative group of the quaternion algebra above. To describe such correspondence we make
essential use of results of Tate (cf. \cite{WM}, Thm. $6$), describing the local structure at every
prime $\ell$ of the module $\ho_k(A,B)$ of homomorphisms between two abelian varieties $A$, $B$ over
a finite field $k$.

Since the correspondence is classical, this section will probably not appear in a more definitive
version of this paper. We first learnt of this correspondence in \cite{Se}, what is included here is the
outcome of our effort in understanding all the details of its proof. Ghitza in \cite{Gh} explains, and generalizes, a correspondence
very similar to the one considered here. Our method is similar to his.

The notation adopted throughout the section is as follows. The Galois group of a fixed algebraic closure
$\bar k$ of $k$ over $k$ is denoted by $G_k$. For an object $A$ of $\mathcal{C}_{-p}$, and a prime
number $\ell$, we let $T_\ell(A)$ denote the $\ell$--adic Tate module of $A$ for $\ell\neq p$, and
the contravariant Dieudonn\'e module attached to the $p$--divisible group $A[p^\infty]$ of $A$ for
$\ell=p$ (cf. \cite{W}, Ch. 1; \cite{De}, Ch. III). For any prime $\ell$, we set $V_\ell(A)=T_\ell(A)
\otimes_{\z_\ell}\qu_\ell$. If $\ell\neq p$ then $V_\ell(A)/T_\ell(A)$ is identified with the Galois
module $A[\ell^\infty]$ of $\bar k$--valued points of $A$ of $\ell$--power torsion. If $\psi:A
\rightarrow B$ is a morphism in $\mathcal{C}_{-p}$, then $\psi_\ell:T_\ell(A)\rightarrow T_\ell(B)$
denotes the corresponding morphism of Tate modules for $\ell\neq p$, and $\psi_p:T_p(B)\rightarrow T_p(A)$
that of Dieudonn\'e modules. The morphism of $\qu_\ell$--vector spaces deduced from
$\psi_\ell$ is denoted by $V_\ell(\psi)$, while $\psi[\ell^\infty]$ denotes, for $\ell\neq p$,
the morphism from $A[\ell^{\infty}]$ to $B[\ell^\infty]$ induced by $\psi$. It is a basic fact that
$\psi\neq 0$ if and only if $\psi_\ell$ is injective for some (all) $\ell$, if and only if
$V_\ell(\psi)$ is an isomorphism for some (all) $\ell$. If $\psi$ is an isogeny, then $\ker_\ell(\psi)$
denotes the finite subgroup of $A$ given by the $\ell$--primary part of $\ker(\psi)$. If $\ell\neq p$,
then $\ker_\ell(\psi)$ will be identified with the Galois module given by $\coker(\psi_\ell)$, on the
other hand $\coker(\psi_p)$ is the finite length Dieudonn\'e module associated to $\ker_p(\psi)$.

\subsection{The endomorphism ring}\label{endring} Let $E$ be any object of $\mathcal{C}_{-p}$, denote the ring $\enn_k(E)$ by $R$,
and the $\qu$--algebra $\enn_k(E)\otimes\qu$ by $D$. If $\ell$ is any prime number, set $R_\ell=R\otimes\z_\ell$
and $D_\ell=D\otimes\qu_\ell$. As mentioned above, $D$ is a central, division algebra over $\qu$ such that $D_\ell$
is isomorphic to the matrix algebra $M_2(\qu_\ell)$ when $\ell\neq p$, and to ``the'' central, division algebra
over $\qu_\ell$ of rank four when $\ell = p$. In this section we show that
$R$ is a {\it maximal} order of $D$ or, equivalently, that $R_\ell$ is a maximal $\z_\ell$--order in $D_\ell$, for
all $\ell$.

If $\ell$ is a prime $\neq p$, then $T_\ell(E)$ is a free $\z_\ell$--module of rank two on which $G_k$ acts continuously
and in a natural way. The action of the arithmetic Frobenius ${\rm Frob}_k\in G_k$ on $T_\ell(E)$ is the {\it same} as that
given by $\pi_{E,\ell}$, i.e., that induced by $\pi_E$ via functoriality of the Tate module. Therefore the Galois module
structure of $T_\ell(E)$ is specified by the requirement that ${\rm Frob}_k$ act as multiplication by $-p$. In particular we see that
\begin{equation}\label{eq1}\enn_{\z_\ell[G_k]}(T_\ell(E))=\enn_{\z_\ell}(T_\ell(E)).
\end{equation}
The natural map $\iota_\ell:R\rightarrow\enn_{\z_\ell[G_k]}(T_\ell(E))$ sending $r\in R$ into the induced morphism
$r_\ell$ of Tate modules extends by continuity to a map on $R_\ell$, also denote by $\iota_\ell$. A special case of
a theorem of Tate (cf. \cite{T2} or \cite{WM}) yields:

\begin{teo}\label{tateendell} The map $\iota_\ell:R_\ell\rightarrow\enn_{\z_\ell}(T_\ell(E))$ is an isomorphism, and
$R_\ell$ is a maximal order of $D_\ell$.
\end{teo}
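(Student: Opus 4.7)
The plan is to derive this from Tate's isogeny theorem for abelian varieties over finite fields, exploiting the distinctive feature of $\mathcal{C}_{-p}$ that Frobenius acts as the scalar $-p$ on the $\ell$--adic Tate module.

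First I would invoke Tate's theorem (\cite{T2}, or \cite{WM} Thm.~$6$) applied to $E$ with itself: the canonical map $R\otimes_{\z}\z_\ell \to \enn_{\z_\ell[G_k]}(T_\ell(E))$ is an isomorphism, and by construction this map agrees with $\iota_\ell$ once the domain is extended to $R_\ell$ by $\z_\ell$--linearity and continuity. Next I would appeal to equation (\ref{eq1}): since $\pi_E = -p$ in $R$, the arithmetic Frobenius ${\rm Frob}_k$ acts on $T_\ell(E)$ as the central scalar $-p$, so every $\z_\ell$--linear endomorphism automatically commutes with $G_k$. Combining these two observations yields at once the desired isomorphism $\iota_\ell\colon R_\ell \simeq \enn_{\z_\ell}(T_\ell(E))$.

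For the maximality claim I would observe that $T_\ell(E)$ is free of rank $2$ over $\z_\ell$, so $\enn_{\z_\ell}(T_\ell(E))\simeq M_2(\z_\ell)$, which is the standard maximal $\z_\ell$--order in $M_2(\qu_\ell)$. Since $D_\ell\simeq M_2(\qu_\ell)$ and $R_\ell\otimes_{\z_\ell}\qu_\ell = D_\ell$, transporting this structure back along $\iota_\ell$ identifies $R_\ell$ with a maximal $\z_\ell$--order in $D_\ell$.

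The substantive input here is Tate's theorem itself, quoted from the literature; once that is in hand, the remaining work — trivializing the Galois--equivariance condition thanks to the identity $\pi_E=-p$, and recognizing $M_2(\z_\ell)$ as a maximal order — is essentially bookkeeping. If anything is delicate, it is checking that the map produced by Tate's theorem really does coincide with $\iota_\ell$ rather than being merely an abstract isomorphism, but this follows directly from unwinding the functorial definition of the Tate module.
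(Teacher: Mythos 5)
Your proposal is correct and follows the same route as the paper: Tate's isomorphism $R_\ell\simeq\enn_{\z_\ell[G_k]}(T_\ell(E))$, combined with the observation that $\pi_E=-p$ makes ${\rm Frob}_k$ act as a central scalar so that the Galois condition in (\ref{eq1}) is vacuous, and maximality via $\enn_{\z_\ell}(T_\ell(E))\simeq{\rm M}_2(\z_\ell)$. The paper treats this as an immediate consequence of the cited results and gives no further argument, so there is nothing missing from your write-up.
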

%
%
In order to study $R_p$ it is customary to work with the
contravariant Dieudonn\'e module $T_p(E)$ attached to the $p$--divisible group $E[p^\infty]$ of $E$.
If $W=W(k)$ is the ring of Witt vectors of $k$, recall that the Dieudonn\'e ring $\mathcal{A}=\mathcal{A}_k$ over
$k$ is the non--commutative, polynomial ring in two variables $W[F,V]$ subject to the relations

$FV=VF=p$;

$F\lambda=\lambda^\sigma F$, $V\lambda^\sigma=\lambda V$;\\
where $\lambda\in W$ is any element, and $\lambda\rightarrow\lambda^\sigma$ is the automorphism of $W$ inducing the absolute Frobenius on the
residue field $k$. Notice that in this special case where $|k|=p^2$ we have that $F$ and $V$ have the same
semi--linear behavior with respect to the action on $W$, since $\sigma=\sigma^{-1}$.

The Dieudonn\'e module $T_p(E)$ is a left $\mathcal{A}$--module that is finite and free of rank two as $W$--module.
Equation (\ref{eq0}) implies that $F^2$ acts as multiplication by $-p$, therefore the $k$--semi--linear
endomorphism of $T_p(E)/pT_p(E)$ induced by $F$ is nonzero and nilpotent. Using this one shows
that there exists a $W$--basis $(e_1,e_2)$ of $T_p(E)$ such that

$F(e_1)=-pe_2$;

$F(e_2)=e_1$;
\\moreover we must have that $V=-F$ (cf. \cite{Gh2} for more details). The nonzero $\mathcal{A}$--submodules of $T_p(E)$ that are $W$--lattices of $V_p(E)$
are those given by $F^nT_p(E)$, for $n\geq 0$. In terms of the arithmetic of $E$, this implies that every
finite, closed subgroup of $E$ of $p$--power rank is the kernel of a chain of successive, alternating 
applications of the absolute Frobenius morphisms $F_E:E\rightarrow E^{(p)}$ and
$F_{E^{(p)}}:E^{(p)}\rightarrow E^{(p^2)}=E$. Notice that equation (\ref{eq0}) says that $F_{E^{(p)}}F_{E}=-p$.

The ring $R$ acts on the right of $T_p(E)$ by functoriality, and determines a ring homomorphism
$\iota_p:R^{\rm op}\rightarrow\enn_{\mathcal{A}}(T_p(E))$ which extends by continuity to $R_p^{\rm op}$.
There is a version at $p$ of Theorem (\ref{tateendell}), also due to Tate (cf. \cite{WM}, Thm. $6$):
\begin{teo}\label{tatep} The map $\iota_p:R_p^{\rm op}\rightarrow\enn_{\mathcal{A}}(T_p(E))$ is an isomorphism.
\end{teo}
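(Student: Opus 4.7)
The plan is to compute $\enn_{\mathcal{A}}(T_p(E))$ by hand from the explicit basis $(e_1,e_2)$ exhibited just before the theorem, identify it with the unique maximal $\z_p$-order in the quaternion division algebra $D_p$, and then fit $R_p^{\rm op}$ inside it.

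First I would compute the centralizer of $F$ among $W$-linear endomorphisms of $T_p(E)$. Writing a generic $\phi$ as $\phi(e_1)=\alpha e_1+\beta e_2$, $\phi(e_2)=\gamma e_1+\delta e_2$ with $\alpha,\beta,\gamma,\delta\in W$ and imposing $\phi F=F\phi$, the $\sigma$-semi-linearity of $F$ together with the relations $F(e_1)=-pe_2$, $F(e_2)=e_1$ forces $\delta=\alpha^\sigma$ and $\beta=-p\gamma^\sigma$, leaving two free parameters $a=\alpha$ and $c=\gamma$ in $W$; the compatibility with $V$ is automatic since $V=-F$. Setting $\varpi$ to be the endomorphism corresponding to $(a,c)=(0,1)$, a direct check shows $\varpi^2=-p$, that the assignment $a\mapsto\phi_{a,0}$ realises $W$ as a subring of $\enn_{\mathcal{A}}(T_p(E))$, and that $\varpi\lambda=\lambda^\sigma\varpi$ for $\lambda\in W$ via this embedding. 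One concludes $\enn_{\mathcal{A}}(T_p(E))=W\oplus W\varpi$, which is exactly the standard presentation of the unique maximal $\z_p$-order in the central quaternion division algebra over $\qu_p$.

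Next, injectivity of $\iota_p$: any nonzero $r\in R$ is an isogeny (because $D=R\otimes\qu$ is a division algebra), so $V_p(r)$ is an automorphism of $V_p(E)$ and in particular $r_p\colon T_p(E)\to T_p(E)$ is nonzero. By $\z_p$-flatness this extends to injectivity of $\iota_p$ on all of $R_p^{\rm op}$. Extending scalars to $\qu_p$, the induced map $D_p^{\rm op}\hookrightarrow\enn_{\mathcal{A}}(V_p(E))$ is an injection of two central simple $\qu_p$-algebras of the same rank $4$, hence an isomorphism. Consequently $\iota_p(R_p^{\rm op})$ is a $\z_p$-order in this quaternion division algebra, contained in the maximal order $\enn_{\mathcal{A}}(T_p(E))$ computed above.

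The only remaining — and genuinely substantive — point is surjectivity of $\iota_p$. The cleanest route is to invoke Tate's theorem on homomorphisms of $p$-divisible groups over finite fields (\cite{WM}, Thm. $6$) applied to $A=B=E$, which is precisely the assertion that $\ho_k(A,B)\otimes\z_p\to\ho_{\mathcal{A}}(T_p(B),T_p(A))$ is an isomorphism. In concrete terms the non-formal content is to exhibit a preimage of $\varpi$ inside $R$, i.e.\ an endomorphism of $E$ over $k$ that squares to $-p=\pi_E$; unlike at primes $\ell\neq p$, where the analogous statement for $T_\ell$ was extracted formally from the Galois description, here this step cannot be avoided and is the main obstacle to a self-contained argument.
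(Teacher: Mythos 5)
Your argument is correct and ultimately rests on exactly the same pillar as the paper, which offers no proof of this theorem beyond citing Tate's result on homomorphisms at $p$ (\cite{WM}, Thm.\ 6): you rightly isolate surjectivity (equivalently, maximality of $R_p$) as the only non-formal step and defer it to that citation. Your explicit computation of $\enn_{\mathcal{A}}(T_p(E))=W\oplus W\varpi$ with $\varpi^2=-p$ and $\varpi\lambda=\lambda^\sigma\varpi$ agrees with the matrix description the paper derives immediately after the theorem, so the surrounding analysis is a correct (and welcome) elaboration rather than a different route.
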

The ring $R_p^{\rm op}$ is thus identified with the ring of $W$--linear endomorphisms of $T_p(E)$ commuting
with $F$. Using the coordinate system induced by the previous basis $(e_1,e_2)$ of $T_p(E)$, we readily compute that
$R_p^{\rm op}$ is described by matrices of the form
$$\begin{pmatrix}
a^{\sigma}& - b\\
pb^\sigma&a\\
\end{pmatrix},$$
where $a,b\in W$. Letting $a$ and $b$ vary in the degree two, unramified extension $L=W[1/p]$ of $\qu_p$, leads to a matrix
description of $D_p^{\rm op}=R_p^{\rm op}\otimes_{\z_p}\qu_p$ whose trace ($\trace$) and determinant ($\det$)
respectively give the reduced trace and reduced norm of $D_p^{\rm op}$ (cf. \cite{Vi}, I.1). The involution
$x\rightarrow\trace(x)-x$ gives an isomorphism $D_p^{\rm op}\xrightarrow{\sim} D_p$, sending $R_p^{\rm op}$ to 
$R_p$. Its effect on the matrix description is
$$\begin{pmatrix}
a^{\sigma}&-b\\
pb^\sigma&a\\
\end{pmatrix}\longrightarrow\begin{pmatrix}
a&b\\
-pb^\sigma&a^\sigma\\
\end{pmatrix}.$$
%
The ring $R_p$ is the unique {\it maximal} order of $D_p$ as it consists of all the elements with both reduced trace and
reduced norm valued in $\z_p$. If $\nu_p:\qu_p\rightarrow\z\cup\{\infty\}$ is the valuation of $\qu_p$, normalized in such a way that
$\nu_p(p)=1$, then the function $R\ni r\rightarrow\nu_p(\det(r))\in\z\cup\{\infty\}$ has the formal properties of a
discrete valuation (\cite{Vi}, II.1). Therefore $R$ has a unique, two--sided, maximal, principal ideal $\mathfrak{m}$ given by all the
elements whose determinant belongs to $(p)\subset\z_p$, and generated by element whose determinant generates $(p)$ in $\z_p$.
The residue field $R/\mathfrak{m}$ has order $p^2$.

\subsection{Isogenies in $\mathcal{C}_{-p}$ and ideals of $\enn_k(E)$}\label{subiso}
In this section we prove two theorems useful to describe a correspondece between
isogenies of $\mathcal{C}_{-p}$ whose source is a fixed object $E$, and nonzero, left ideals
of $R=\enn_k(E)$. They both follow from the results of Tate that we already encountered.

%

Let $\psi:E\rightarrow E_\psi$ be any isogeny in $\mathcal{C}_{-p}$, consider the $R$--left ideal
$$I_\psi=\{r\in R: r=r'\psi,\hphantom{.}{\rm for}\hphantom{.}{\rm some}\hphantom{.}r'\in \ho_k(E_\psi,E)\}$$
consisting of all the endomorphisms of $E$ trivial on the finite subgroup $\ker(\psi)$.
Pull--back by $\psi$ gives an isomorphism $\psi^*:\ho(E_\psi,E)\xrightarrow{\sim}I_\psi$ of left
$R$--modules. Notice that $I_\psi$ is a $\z$--lattice of $D$, since for example $\deg(\psi)\in I_\psi$.
If $\ell$ is any prime, the module $I_\psi\otimes\z_\ell$ will be denoted by $I_{\psi,\ell}$,
it is a left ideal of $R_\ell$ that is also a $\z_\ell$--lattice of $D_\ell$. The isomorphism $\psi^*$ above
induces an isomorphism $\ho(E_\psi,E)\otimes\z_\ell\xrightarrow{\sim} I_{\psi,\ell}$, tacitly used in what follows.

\begin{rem}\label{locpri} From the maximality of $R_\ell$, it follows that $I_{\psi,\ell}$ is {\it principal}
for any prime $\ell$ (cf. \cite{Vi} II.1, II.2). If $\ell\neq p$, there exists a $\z_\ell$--lattice
$\Lambda_0$ of $V_\ell(E)$ {\it containing} $T_\ell(E)$ such that the isomorphism $\iota_\ell$
(cf. Thm. \ref{tateendell}) sends $I_{\psi,\ell}$ to the left ideal $\enn_{\z_\ell}(\Lambda_0,T_\ell(E))$.
In fact we will see that $\Lambda_0$ is the pull--back of $T_\ell(E_\psi)$ via $V_\ell(\psi)$ (cf. proof of Thm. \ref{teo1}).
For $\ell=p$, there exists a $W$--lattice $M_0$ in $V_p(E)$ {\it contained} in
$T_p(E)$ such that the isomorphism $\iota_p$ (cf. Thm. \ref{tatep}) sends $I_{\psi,p}$ to the right ideal
$\enn_\mathcal{A}(T_p(E),M_0)$. From the proof of Theorem \ref{teo1} it will follow that
$M_0$ is the isomorphic image of $T_p(E_\psi)$ with respect to $V_p(\psi):V_p(E_\psi)\rightarrow V_p(E)$.
For any $\ell$, the left ideal $I_{\psi,\ell}$ is generated by any of its elements of reduced norm with
minimal valuation.
\end{rem}

Before showing that the isogeny $\psi$ can essentially be recovered from $I_\psi$ (cf. Thm. \ref{teo1}),
we make a few observations. Let $\ell$ be a prime $\neq p$, from the isogeny $\psi:E\rightarrow E_\psi$ we can deduce a
commutative diagram of $G_k$--modules with exact rows
$$\xymatrix{ 0\ar[r] &T_\ell(E)\ar[r]\ar[d]^{\psi_\ell}& V_\ell(E)\ar[r]\ar[d]^{V_\ell(\psi)}
&E[\ell^\infty]\ar[r]\ar[d]^{\psi[\ell^\infty]}&0\\
0\ar[r] &T_\ell(E_\psi)\ar[r]& V_\ell(E_\psi)\ar[r]&E_\psi[\ell^\infty]\ar[r]&0.}$$
The pull--back of the Tate module $T_\ell(E_\psi)$ via the isomorphism $V_\ell(\psi)$
identifies the former with a $\z_\ell$--lattice of $V_\ell(E)$ containing $T_\ell(E)$,
and that will be denoted by $\Lambda_{\psi_\ell}$.
Under this identification, the map $\psi_\ell$ clearly corresponds to the inclusion
$T_\ell(E)\subset\Lambda_{\psi_\ell}$.
Since the two rows of the diagram are exact, $V_\ell(\psi)$ induces an identification
$\coker(\psi_\ell)\simeq\ker(\psi[\ell^\infty])$, or, equivalently,
\begin{equation}\label{cokerell}
\Lambda_{\psi_\ell}/T_\ell(E)\simeq\ker_\ell(\psi).
\end{equation}
The $\z_\ell$--lattice $\Lambda_\psi$ depends on $\psi$ in a functorial way, in a sense that can
be made precise by working with the category $\mathcal{C}_E$ that will be introduced in the next section.
We observe that if $\varphi:E\rightarrow E_\varphi$ is another isogeny, and $u:E_\psi\rightarrow E_\varphi$ is a morphism, then
the map ${V_\ell(\varphi)}^{-1}V_\ell(u)V_\ell(\psi):V_\ell(E)\rightarrow V_\ell(E)$ sends $\Lambda_\psi$ to $\Lambda_\varphi$
and there is a commutative diagram
$$\xymatrix{ \Lambda_\psi\ar[rr]^{{V_\ell(\varphi)}^{-1}V_\ell(u)V_\ell(\psi)}\ar[d]^{\simeq}&&\Lambda_\varphi\ar[d]^{\simeq}\\
T_\ell(E_\psi)\ar[rr]^{u_\ell}&&T_\ell(E_\varphi)}$$
where the vertical morphisms are the natural identifications. In particular, if
$u$ is an isogeny, the cokernel of the top horizontal map of the diagram is isomorphic to $\ker_\ell(u)$ in a natural way.

If $\ell=p$, the situation is formally analogous, after replacing the covariant Tate module by the contravariant
Dieudonn\'e module. The morphism $\psi_p:T_p(E_\psi)\rightarrow T_p(E)$ identifies $T_p(E_\psi)$
with an $\mathcal{A}$--submodule $M_{\psi_p}$ of $T_p(E)$ that is a $W$--lattice of $V_p(E)$. Moreover
$T_p(E)/M_{\psi_p}$ is the Dieudonn\'e module associated to the finite group $\ker_p(\psi)$. As before,
if $\varphi:E\rightarrow E_\varphi$ is an isogeny and $u:E_\psi\rightarrow E_\varphi$ is any morphism,
then the map $V_p(\psi)V_p(u){V_p(\varphi)}^{-1}:V_p(E)\rightarrow V_p(E)$ sends $M_{\varphi_p}$ to
$M_{\psi_p}$, there is a commutative diagram
$$\xymatrix{ M_{\psi_p}\ar[d]^{\simeq}&&M_{\varphi_p}\ar[ll]_{V_p(\psi)V_p(u){V_p(\varphi)}^{-1}}\ar[d]^{\simeq}\\
T_p(E_\psi)&&T_p(E_\varphi)\ar[ll]_{u_p}}$$
where the vertical morphisms are the natural identifications. Moreover the cokernel of
the top horizontal map of the diagram is the Dieudonn\'e module of $\ker_p(u)$.




\begin{teo}\label{teo1} For any isogeny $\psi:E\rightarrow E_\psi$ and any prime number $\ell$, there exists
an isogeny $r':E_\psi\rightarrow E$ whose degree is prime to $\ell$. Equivalently, there exists an isogeny
$r\in I_\psi$ so that $\ker_\ell(r)=\ker_\ell(\psi)$. In particular, $\ker(\psi)$ coincides with the largest $k$--subgroup of
$E$ killed by every element of $I_\psi$.
\end{teo}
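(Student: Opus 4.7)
For each prime $\ell$ I would produce an element $r\in I_\psi$ whose image in the principal left $R_\ell$--ideal $I_{\psi,\ell}$ is a generator. Such an $r$ attains the minimum of the $\ell$--adic valuation of the reduced norm on $I_{\psi,\ell}$, which I will identify with $v_\ell(\deg\psi)$, and hence satisfies $\ker_\ell(r)=\ker_\ell(\psi)$.

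First, the equivalence of the two formulations and the ``in particular'' assertion. Any $r\in I_\psi$ factors as $r=r'\psi$ with $r'\in\ho_k(E_\psi,E)$; since the degree on $R$ coincides with the reduced norm of $D$, one has $v_\ell(\deg r)=v_\ell(\deg r')+v_\ell(\deg\psi)$. The inclusion $\ker(\psi)\subseteq\ker(r)$ is an equality on $\ell$--primary parts precisely when $v_\ell(\deg r')=0$, giving the equivalence. The last sentence follows by applying the first formulation prime-by-prime: every element of $I_\psi$ kills $\ker(\psi)$, and the intersection $K=\bigcap_{r\in I_\psi}\ker(r)$ satisfies $K_\ell\subseteq\ker_\ell(r)=\ker_\ell(\psi)$ for every $\ell$.

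To identify the minimum reduced norm, invoke Remark \ref{locpri}, which describes $I_{\psi,\ell}$ as $\ho_{\z_\ell}(\Lambda_{\psi_\ell},T_\ell(E))$ for $\ell\neq p$ and as $\ho_{\mathcal{A}}(T_p(E),M_{\psi_p})$ for $\ell=p$, and asserts its principality as a left $R_\ell$--module; fix a generator $\rho_\ell$. By $(\ref{cokerell})$ and its Dieudonn\'e analogue, the ambient lattice contains the embedded one with index $\ell^{v_\ell(\deg\psi)}$. A Smith-normal-form calculation, viewing $r\in I_{\psi,\ell}$ as a $\z_\ell$--linear map between these two lattices, yields $v_\ell(\deg r)=v_\ell(\deg\psi)+\mathrm{length}(\coker r)$, so that the minimum $v_\ell(\deg\psi)$ is attained exactly on the open subset $R_\ell^*\rho_\ell$ of generators of $I_{\psi,\ell}$. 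Since $I_\psi\otimes\z_\ell=I_{\psi,\ell}$, the reduction $I_\psi\rightarrow I_{\psi,\ell}/\ell I_{\psi,\ell}$ is surjective, so lifting any good class produces the required $r\in I_\psi$.

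\textbf{Main obstacle.} The core work is the Smith-normal-form step identifying the minimum reduced norm on $I_{\psi,\ell}$; at $\ell=p$ one must track the contravariance of the Dieudonn\'e module and the action of $R_p^{\rm op}$ rather than $R_p$. Once this minimum is pinned down, the density of the global lattice $I_\psi$ in its $\ell$--adic completion makes the concluding approximation step routine.
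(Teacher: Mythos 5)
Your proposal follows the same route as the paper: localize at $\ell$, use the principality of $I_{\psi,\ell}$ over the maximal order $R_\ell$, and produce a global $r\in I_\psi$ whose image generates $I_{\psi,\ell}$. The reductions you carry out (the equivalence of the two formulations via $v_\ell(\deg r)=v_\ell(\deg r')+v_\ell(\deg\psi)$, the ``in particular'' clause done prime by prime, and the lifting of a local generator through $I_\psi\rightarrow I_{\psi,\ell}/\ell I_{\psi,\ell}$) are all correct.

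The one genuine issue is where you locate the difficulty. The Smith normal form computation you flag as the main obstacle is routine; the load-bearing step is the claim that ``the ambient lattice contains the embedded one with index $\ell^{v_\ell(\deg\psi)}$,'' i.e.\ the identification $\Lambda_0=\Lambda_{\psi_\ell}$. Remark \ref{locpri} explicitly defers that identification to the proof of this very theorem, so citing it there is circular. What is actually needed, and what the paper's proof supplies, is Tate's theorem that the natural map $\ho(E_\psi,E)\otimes\z_\ell\rightarrow\ho_{\z_\ell}(T_\ell(E_\psi),T_\ell(E))$ is an isomorphism (together with its Dieudonn\'e analogue at $p$): the surjectivity is what forces $\Lambda_0=\Lambda_{\psi_\ell}$. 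Without it one only gets $\Lambda_{\psi_\ell}\subseteq\Lambda_0$ (since every element of $I_{\psi,\ell}$ kills $\ker_\ell(\psi)$), so the minimum of the $\ell$--adic valuation of the reduced norm on $I_{\psi,\ell}$ could a priori be strictly larger than $v_\ell(\deg\psi)$, and a local generator would then satisfy $\ker_\ell(r)\supsetneq\ker_\ell(\psi)$, i.e.\ exactly the conclusion you want would fail. Once the appeal to Remark \ref{locpri} at that point is replaced by an explicit appeal to Tate's theorem on homomorphisms, your argument is complete and coincides with the paper's.
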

\begin{proof} We begin by proving the theorem in the case $\ell\neq p$. Consider the commutative diagram
$$\xymatrix{ \ho(E_\psi,E)\otimes\z_\ell\ar[r]\ar[d]^{\simeq}&\ho_{\z_\ell}(T_\ell(E_\psi),T_\ell(E))\ar[d]^{\simeq}\\
I_{\psi,\ell}\ar[r]&\ho_{\z_\ell}(\Lambda_{\psi_\ell},T_\ell(E))}$$
where the horizontal maps are induced by functoriality of the Tate module, and
the vertical isomorphism on the right comes from the identification $T_\ell(E_\psi)\simeq\Lambda_{\psi_\ell}$
and is given by $u_\ell\rightarrow u_\ell V_\ell(\psi)_{|\Lambda_{\psi_\ell}}^{|T_\ell(E_\psi)}$.

Since $\ho_{\z_\ell[G_k]}(T_\ell(E_\psi),T_\ell(E))=\ho_{\z_\ell}(T_\ell(E_\psi),T_\ell(E))$,
the main theorem of \cite{T2} says that the top horizontal map of the previous
diagram is an isomorphism, thus so is the bottom one and the isomorphism
$\iota_\ell$ from Theorem \ref{tateendell} induces an identification of left ideals
\begin{equation}\label{locprincell} I_{\psi,\ell}\simeq\ho_{\z_\ell}(\Lambda_{\psi_\ell},T_\ell(E)).
\end{equation}

In particular, $I_{\psi,\ell}=R_\ell g_\ell$, where $g_\ell\in D_\ell$ is any element of
$R_\ell$ mapping $\Lambda_{\psi_\ell}$ isomorphically to $T_\ell(E)$. If we pick $r\in I_\psi$ such that its image
$r_\ell\in I_{\psi,\ell}$ is a generator then $\ker_\ell(r)=\ker_\ell(\psi)$ by (\ref{cokerell}), since the lattices
$\Lambda_{r_\ell}$ and $\Lambda_{\psi_\ell}$ in $V_\ell(E)$ coincide.

For $\ell=p$ consider, in an analogous way, the commutative diagram
$$\xymatrix{ \ho(E_\psi,E)\otimes\z_p\ar[r]\ar[d]^{\simeq}&\ho_{\mathcal{A}}(T_p(E),T_p(E_\psi))\ar[d]^{\simeq}\\
I_{\psi,p}\ar[r]&\ho_\mathcal{A}(T_p(E),M_{\psi_p})}$$
where the horizontal maps are induced by functoriality of $T_p$, and the vertical
isomorphism on the right is given by $u_p\rightarrow\psi_p u_p$. Notice that
the lower horizontal map is that induced by the anti--isomorphism
$\iota_p:R_p\rightarrow\enn_\mathcal{A}(T_p(E))$, where $\ho_\mathcal{A}(T_p(E),M_{\psi_p})$ is
regarded in a natural way as a right ideal of $\enn_\mathcal{A}(T_p(E))$.

Since both horizontal maps are isomorphisms (cf. \cite{WM}, Thm. 6), the principal
left ideal $I_{\psi,p}$ of $R_p$ is generated by any element $r_p$ such that
$\iota_p(r_p)$ defines an endomorphism of $T_p(E)$ sending $T_p(E)$ isomorphically to
$M_{\psi_p}$. Choosing now $r\in I_\psi$ such that $r_p\in I_{\psi,p}$ is a generator,
we see that $M_{r_p}=M_{\psi_p}$, therefore $\ker_p(r)=\ker_p(\psi)$ and the theorem
follows.
\end{proof}

Let now $I$ be a nonzero left ideal of $R$, consider the isogeny
\begin{equation}\label{idealisogeny}\psi_I:E\longrightarrow E/H(I),
\end{equation}
where
$$H(I)=\bigcap_{r\in I}\ker(r)$$
is the largest $k$--subgroup of $E$ that is killed by every element of $I$, and
$\psi_I$ is the canonical isogeny. The ideal $I$ can be recovered
from $\psi_I$:

\begin{teo}\label{teo2} For any nonzero left ideal $I$ of $R$ we have $I=I_{\psi_I}$.
\end{teo}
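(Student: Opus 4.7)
My plan is to prove $I \subseteq I_{\psi_I}$ directly from the definitions (any $r \in I$ vanishes on $H(I) = \ker(\psi_I)$ and hence factors as $r = r'\psi_I$ for some $r' \in \ho_k(E/H(I), E)$), and then to establish the reverse inclusion by localising at each prime $\ell$. Since $I$ and $I_{\psi_I}$ are both $\z$-lattices in $D$, global equality follows once $I_\ell = I_{\psi_I,\ell}$ is known for every $\ell$, including $\ell = p$.

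Fix a prime $\ell$. By Remark \ref{locpri} the maximal order $R_\ell$ makes $I_\ell$ into a principal left ideal, say $I_\ell = R_\ell \gamma_0$. The set of generators $R_\ell^* \gamma_0$ is $\ell$-adically open in $I_\ell$, and $I$ is dense in $I \otimes \z_\ell = I_\ell$; approximating $\gamma_0$ sufficiently closely by an element of $I$ gives $\gamma \in I$ of the form $\gamma = (1+t)\gamma_0$ with $t \in \ell^N R_\ell$, so $1+t \in R_\ell^*$ and $I_\ell = R_\ell \gamma$. In this way the generator of $I_\ell$ can be taken to be an actual endomorphism $\gamma \in R$.

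The heart of the argument is the identity $\ker(\gamma)[\ell^\infty] = H(I)[\ell^\infty]$ of finite subgroup schemes of $E$. The inclusion $\supseteq$ is immediate since $\gamma \in I$. For the other direction, given any $r \in I$ I would use $r \in I_\ell = R_\ell\gamma$ to write $r = s\gamma$ with $s \in R_\ell$; via the functorial action of $R_\ell$ on the $\ell$-primary torsion (through $T_\ell(E)$ if $\ell \neq p$, or the contravariant Dieudonn\'e module $T_p(E)$ if $\ell = p$), this factorisation shows that $r$ annihilates $\ker(\gamma)[\ell^\infty]$. Intersecting over all $r \in I$ yields $\ker(\gamma)[\ell^\infty] \subseteq H(I)[\ell^\infty]$.

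To conclude, the isogenies $\gamma : E \to E$ and $\psi_I : E \to E/H(I)$ have the same $\ell$-primary kernel, so by the dictionary provided by Remark \ref{locpri} (whose content is that $I_{\psi,\ell}$ is determined by the lattice $\Lambda_{\psi_\ell} \supseteq T_\ell(E)$ with quotient $\ker_\ell(\psi)$, and analogously by $M_{\psi_p}$ when $\ell = p$) one obtains $I_{\psi_I,\ell} = I_{\gamma,\ell} = R_\ell\gamma = I_\ell$. Letting $\ell$ vary proves $I = I_{\psi_I}$. The main obstacle is the factorisation argument in the previous paragraph, which crucially depends on the principality of $I_\ell$, and hence on the maximality of $R_\ell$; the only subtle point is handling $\ell \neq p$ and $\ell = p$ in parallel, which works because in both cases the factorisation $r = s\gamma$ in $R_\ell$ transports verbatim to a factorisation of the induced action on the $\ell$-primary torsion of $E$.
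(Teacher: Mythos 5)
Your proof is correct, and it follows the same global strategy as the paper's --- establish $I\subseteq I_{\psi_I}$ directly from the definitions and then prove the reverse inclusion one prime at a time, with the maximality of $R_\ell$ (hence the principality of $I_\ell$) doing the real work --- but the local step is carried out by a genuinely different mechanism. The paper never produces a global generator: it writes $J_\ell:=I_{\psi_I,\ell}$ as $\ho_{\z_\ell}(\Lambda_{\psi_{I,\ell}},T_\ell(E))$ and $I_\ell$ as $\ho_{\z_\ell}(\Lambda_0,T_\ell(E))$ for some lattice $\Lambda_0\supseteq T_\ell(E)$, deduces $\Lambda_0\supseteq\Lambda_{\psi_{I,\ell}}$ from the trivial inclusion $I_\ell\subseteq J_\ell$, and obtains the opposite inclusion from the identity $\Lambda_{\psi_{I,\ell}}=\bigcap_{0\neq r\in I}{V_\ell(r)}^{-1}(T_\ell(E))$, whose right-hand side visibly contains $\Lambda_0$. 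You instead use an $\ell$-adic approximation argument to find an honest isogeny $\gamma\in I$ with $I_\ell=R_\ell\gamma$, prove $\ker_\ell(\gamma)=H(I)[\ell^\infty]$ by factoring every $r\in I$ as $s\gamma$ with $s\in R_\ell$, and then invoke the fact that the local ideal of an isogeny is determined by its $\ell$-primary kernel. This is essentially the mechanism of the paper's Theorem \ref{teo1} (which picks an element of $I_\psi$ whose image generates $I_{\psi,\ell}$) transplanted to an arbitrary ideal $I$; what it buys is a concrete intermediate statement (a single element of $I$ computing $H(I)$ locally), at the cost of the density step and of having to transport the factorisation $r=s\gamma$ through the contravariant Dieudonn\'e module at $\ell=p$, where the paper's purely lattice-theoretic sandwich avoids any mention of kernels. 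Both arguments rest on the same inputs (Theorems \ref{tateendell} and \ref{tatep} and Remark \ref{locpri}), so neither is more general than the other.
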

\begin{proof} Let $J=I_{\psi_I}$ be the left ideal of $R$ given by the endomorphisms of $E$ trivial on $H(I)$,
certainly $I\subset J$. For any prime $\ell$, denote by $I_\ell$ and $J_\ell$ the left ideals of $R_\ell$ obtained
from $I$ and $J$ after tensoring with $\z_\ell$. Since $I_\ell\subset J_\ell$, it will be enough to show that
$J_\ell\subset I_\ell$ for any $\ell$.

For $\ell\neq p$, as explained at the beginning of the section, the Tate module $T_\ell(E/H(I))$ is identified with
the $\z_\ell$--lattice $\Lambda_{\psi_{I,\ell}}$ of $V_\ell(E)$ containing $T_\ell(E)$ and such that the equality
$$\Lambda_{\psi_{I,\ell}}/T_\ell(E)=\ker_\ell(\psi_I),$$
holds in the $\ell$--divisible group $E[\ell^\infty]$. Moreover, the isomorphism $\iota_\ell:R_\ell\xrightarrow{\sim}
\enn_{\z_\ell}(T_\ell(E))$ induces an identification $J_\ell\simeq\ho_{\z_\ell}(\Lambda_{\psi_{I,\ell}},T_\ell(E))$,
as was shown in the proof of Theorem \ref{teo1}.

On the other hand, $I_\ell$ is a nonzero left ideal of $R_\ell=\enn_{\z_\ell}(T_\ell(E))$ and
there exists a $\z_\ell$--lattice $\Lambda_0$ of $V_\ell(E)$, containing $T_\ell(E)$, such that
$I_\ell=\ho_{\z_\ell}(\Lambda_0,T_\ell(E))$; since $I_\ell\subset J_\ell$, we have
$\Lambda_0\supset\Lambda_{\psi_{I,\ell}}$.

Now, by definition of $\psi_I$ we have
$$\ker_\ell(\psi_I)=\bigcap_{0\neq r\in I}\ker_\ell(r),$$
or, reformulating,
\begin{equation}\label{lam}\Lambda_{\psi_{I,\ell}}=\bigcap_{0\neq r\in I}{V_\ell(r)}^{-1}(T_\ell(E)).
\end{equation}
The right hand side of (\ref{lam}) certainly contains $\Lambda_0$, therefore
$\Lambda_0\subset\Lambda_{{(\psi_I)}_\ell}$ and $I_\ell\supset J_\ell$.

For $\ell=p$ the proof is similar and is omitted. As in the case $\ell\neq p$, the key fact is
that $I_p$ is known {\it a priori} to be principal, thanks to the maximality of $R_p$ (cf. Remark \ref{locpri}).
\end{proof}

\begin{rem} Theorem \ref{teo2} is a special case of a more general result of Waterhouse (cf. \cite{W}, Thm. 3.15).
\end{rem}

\subsection{An equivalence of categories}\label{eqcat} We interpret the result of the previous section in a formal way,
by showing the existence of an anti--equivalence of categories (cf. Thm. \ref{functor}).

Let $E$ be a fixed object of $\mathcal{C}_{-p}$, and let $\mathcal{C}_{E}$ be the category of {\it isogenies} of
$\mathcal{C}_{-p}$ with source $E$, that is the category whose objects are isogenies in $\mathcal{C}_{-p}$ of
the form $\psi:E\rightarrow E_\psi$, and whose sets of morphisms $\ho(\psi,\varphi)$ between two objects is
simply $\ho(E_\psi,E_\varphi)$.

Let $\psi:E\rightarrow E_\psi$ be any object of $\mathcal{C}_{E}$, the natural identification
$I_\psi=\ho(E_\psi,E)=\ho(\psi,{\rm id}_E)$ makes clear that $I_\psi$ depends functorially
on $\psi$; more precisely the assignment $\psi\rightarrow I_\psi$ defines a contravariant functor $\mathcal{I}$ from
$\mathcal{C}_{-p}$ to the category $\mathcal{P}_R^{(1)}$ of projective left $R$--modules of rank one.
We clarify that if $u\in\ho(\psi,\varphi)$ is a morphism in $\mathcal{C}_E$, then $\mathcal{I}(u):I_\varphi\rightarrow I_\psi$
is constructed as follows. If $r=r''\varphi \in I_\varphi$, consider the commutative diagram

$$\xymatrix{E\ar[dd]^{\psi}&&E\ar[rr]^{r}\ar[dd]^{\varphi}&&E\\
&&&&\\
E_\psi\ar[rr]^{u}&&E_\varphi\ar[rruu]^{r''}&&\\}$$
the value of $\mathcal{I}(u)$ on $r$ is the composition $r''u\psi\in I_\psi$.

From the results of the previous section we can draw the following consequence:

\begin{teo}\label{functor} The functor $\mathcal{I}$ sending $\psi$ to $I_\psi$ induces an anti--equivalence of categories
between $\mathcal{C}_{E}$ and $\mathcal{P}_R^{(1)}$.
\end{teo}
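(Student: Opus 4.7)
The plan is to verify in turn that $\mathcal{I}$ is well-defined, that it is essentially surjective, and that it is fully faithful. Contravariance was already established in the discussion preceding the theorem.

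For well-definedness, one must check that each $I_\psi$ is a rank-one projective left $R$-module. This follows from Remark \ref{locpri}: for every prime $\ell$ the localisation $I_{\psi,\ell}$ is a principal left ideal of the maximal order $R_\ell$, hence free of rank one as a left $R_\ell$-module. Since $I_\psi$ is a $\z$-lattice in $D$ whose every localisation is $R_\ell$-free of rank one, it is finitely generated projective of rank one over $R$. Essential surjectivity is then a direct application of Theorem \ref{teo2}: for $P\in\mathcal{P}_R^{(1)}$, the $D$-module $P\otimes_\z\qu$ is free of rank one, so any choice of isomorphism $P\otimes_\z\qu\simeq D$ embeds $P$ as a $\z$-lattice in $D$ stable under left $R$-multiplication, and after clearing denominators we may identify $P$ with a nonzero left ideal $I\subset R$. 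Theorem \ref{teo2} then furnishes an isogeny $\psi_I:E\to E/H(I)$ with $\mathcal{I}(\psi_I)=I_{\psi_I}=I\simeq P$.

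For full faithfulness, fix $\psi$ and $\varphi$ in $\mathcal{C}_E$; we must show that
$$\mathcal{I}_{\psi,\varphi}:\ho_k(E_\psi,E_\varphi)\longrightarrow\ho_R(I_\varphi,I_\psi),\quad u\mapsto(r\mapsto r\circ u),$$
is bijective. Faithfulness is immediate: the dual isogeny $\hat\varphi$ lies in $\ho_k(E_\varphi,E)=I_\varphi$, and $\mathcal{I}_{\psi,\varphi}(u)=0$ implies in particular $\hat\varphi\circ u=0$, forcing $u=0$. Both sides of $\mathcal{I}_{\psi,\varphi}$ are torsion-free $\z$-modules of rank $4$ (after tensoring with $\qu$ they become one-dimensional left $D$-modules), so it is enough to check that $\mathcal{I}_{\psi,\varphi}$ becomes an isomorphism after tensoring with $\z_\ell$ for every prime $\ell$. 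For $\ell\neq p$, Theorem \ref{tateendell} together with the main theorem of \cite{T2} identifies $\ho_k(E_\psi,E_\varphi)\otimes\z_\ell$ with $\ho_{\z_\ell}(T_\ell(E_\psi),T_\ell(E_\varphi))$, while (\ref{locprincell}) identifies $I_{\bullet,\ell}$ with $\ho_{\z_\ell}(\Lambda_{\bullet_\ell},T_\ell(E))$ as left $R_\ell$-modules via $\iota_\ell$ and $T_\ell(E_\bullet)\simeq\Lambda_{\bullet_\ell}$. Since $R_\ell=\enn_{\z_\ell}(T_\ell(E))$ is a full matrix algebra over $\z_\ell$ and $T_\ell(E)$ is its unique simple module up to isomorphism, Morita equivalence reduces $R_\ell$-linear maps between the two Hom-modules to $\z_\ell$-linear maps $\Lambda_{\psi_\ell}\to\Lambda_{\varphi_\ell}$, and the resulting bijection is precisely $\mathcal{I}_{\psi,\varphi}\otimes\z_\ell$. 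The case $\ell=p$ is analogous, using Theorem \ref{tatep}, the anti-isomorphism $\iota_p:R_p^{\rm op}\simeq\enn_\mathcal{A}(T_p(E))$, and the identifications $T_p(E_\bullet)\simeq M_{\bullet_p}\subset T_p(E)$.

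The main obstacle is the Morita-style identification in the last step, particularly at $\ell=p$, where the semi-linear $\mathcal{A}$-module structure and the op-convention must be tracked carefully to confirm that localised Hom-modules of lattices inside $T_p(E)$ encode exactly the localised Hom-module $\ho_k(E_\psi,E_\varphi)\otimes\z_p$. Once this is in place, everything else is a formal consequence of the Tate and Waterhouse results already recorded in the text.
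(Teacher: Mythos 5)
Your outline is correct: well--definedness and essential surjectivity coincide with the paper's argument (embed a rank--one projective as a left ideal, invoke Theorem \ref{teo2}), and faithfulness via the dual isogeny is a harmless addition that the paper leaves implicit. Where you genuinely diverge is in the proof of fullness. The paper argues globally: a left $R$--linear map $f:I_\varphi\rightarrow I_\psi$ extends to $D$ and is therefore right multiplication by some $\lambda\in D$; after clearing denominators, Theorem \ref{teo1} (the characterization of $\ker(\psi)$ as the common kernel of the elements of $I_\psi$) is used twice, first to factor $\varphi N\lambda$ through $\psi$ and then to divide the resulting morphism by $N$. You instead localize at every prime and identify $\mathcal{I}_{\psi,\varphi}\otimes\z_\ell$ with the canonical anti--isomorphism $\ho_{R_\ell}(\ho_{\z_\ell}(\Lambda_{\varphi_\ell},T_\ell(E)),\ho_{\z_\ell}(\Lambda_{\psi_\ell},T_\ell(E)))\simeq\ho_{\z_\ell}(\Lambda_{\psi_\ell},\Lambda_{\varphi_\ell})$, and similarly at $p$. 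Both routes work. The paper's argument is shorter and sidesteps all bookkeeping of local identifications, at the cost of the slightly ad hoc denominator--clearing; yours is more structural and makes visible that $\mathcal{I}$ is locally the standard lattice duality, which is also closer in spirit to the adelic description of section \ref{adede}. The one point you should not leave as an aside is the claim that the Morita--style bijection \emph{is} $\mathcal{I}_{\psi,\varphi}\otimes\z_\ell$: this compatibility is exactly what the commutative diagrams of section \ref{subiso} establish (the map $\Lambda_{\psi_\ell}\rightarrow\Lambda_{\varphi_\ell}$ attached to $u$ is ${V_\ell(\varphi)}^{-1}V_\ell(u)V_\ell(\psi)$, and dually at $p$ with the lattices $M_{\psi_p}\subset T_p(E)$ and the anti--isomorphism $\iota_p$), so you should cite those diagrams explicitly rather than flag the verification as an open ``obstacle''; with that reference in place your proof is complete.
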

\begin{proof} We show that any object of $\mathcal{P}_R^{(1)}$ is isomorphic to one of the form $\mathcal{I}(\psi)$, and
that $\mathcal{I}$ is fullyfaithful.

Any nonzero projective left $R$--module $P$ of rank one is isomorphic to a finitely generated, left $R$--submodule of $D$,
therefore to a nonzero left ideal $I$ of $R$, after multiplication by a suitable integer $N\geq 1$. From Theorem \ref{teo2} we see that
every nonzero left ideal $I$ of $R$ is of the form $\mathcal{I}(\psi)$, for some $\psi$ in $\mathcal{C}_{E}$.

To see that $\mathcal{I}$ is fullyfaithful, let $\psi$ and $\varphi$ be two objects of $\mathcal{C}_{E}$, and $f:I_\varphi\rightarrow I_\psi$
any morphism of left $R$--modules. We have to show that there exists $u\in\ho(\psi,\varphi)$ such that $f=\mathcal{I}(u)$.
Since $I_\varphi$ and $I_\psi$ are lattices of $D$, $f$ extends uniquely to an endomorphism of $D$, as left $D$--module,
thus there is $\lambda\in D$ such that $f(x)=x\lambda$, for all $x\in I_\varphi$. Let $N$ be an integer $\geq 1$ such that
$N\lambda\in R$. Right multiplication by $N\lambda$ on $D$ clearly induces the morphism $Nf:I_\varphi\rightarrow I_\psi$
and we will first show that there exists $u'\in\ho(E_\psi,E_\varphi)$ such that $\mathcal{I}(u')=Nf$.

For $r=r''\varphi\in I_\varphi$ consider the commutative diagram

$$\xymatrix{E\ar[rr]^{N\lambda}\ar[dd]^{\psi}&&E\ar[rr]^{r}\ar[dd]^{\varphi}&&E\\
&&&&\\
E_\psi\ar@{-->}[rrrruu]_<<<<<<<<<<<{r'}&&E_\varphi\ar[rruu]_<<<<<<{r''}&&\\}$$
The existence of the dotted arrow $r'$ follows from the fact that $rN\lambda$ belongs to $I_\psi$ (in fact even to $NI_\psi$), and
hence factors as $r'\psi$, for some $r'\in\ho(E_\psi,E)$. Showing that there exists $u'\in\ho(E_\psi,E_\varphi)$ with $\mathcal{I}(u')=Nf$
amounts to proving that $\varphi N\lambda$ factors as $u'\psi$, for a certain $u'\in\ho(E_\psi,E_\varphi)$.

If $N\lambda=0$ this is clear, therefore we may assume that $\varphi N\lambda$ is an isogeny. By Theorem \ref{teo1} we have that
$$\ker(\varphi N\lambda)=\bigcap_{r''\in\ho(E_\varphi,E)}\ker(r''\varphi N\lambda),$$
but $r''\varphi N\lambda=r'\psi$, therefore
$$\ker(\psi)\subset\bigcap_{r''\in\ho(E_\varphi,E)}\ker(r''\varphi N\lambda).$$
The two equations readily imply $\ker(\psi)\subset\ker(\varphi N\lambda)$, which gives
the desired factorization of $\varphi N\lambda$ as $u'\psi$.

A further application of Theorem \ref{teo1} gives that $u'$ is divisible by $N$ in $\ho(E_\psi,E_\varphi)$. In fact
since $r''u'\psi\in NI_\psi$, for $r''\in\ho (E_\varphi,E)$, we have that $r''u':E_\psi\rightarrow E$ is trivial on the subgroup
$E_\psi[N]$. Since this happens for {\it all} $r''\in\ho (E_\varphi,E)$ we see that, by Theorem \ref{teo1}, $u'$ is itself
trivial on $E_\psi[N]$, and $u'=Nu$ for a unique $u\in\ho(E_\psi,E_\varphi)$. Therefore $\mathcal{I}(u)=N^{-1}\mathcal{I}(u')=f$,
this completes the proof of the theorem.
\end{proof}

\subsection{Adelic description of $\mathcal{C}_{-p}$}\label{adede} The functor $\mathcal{I}$ introduced in the previous section
admits an adelic description, thanks to the fact that $I_\psi$ is locally principal, for any $\psi$ in $\mathcal{C}_E$ (cf.
Remark \ref{locpri}). Let $\hat R$ denote $R\otimes\hat\z$ and, similarly, $\hat D$ denote $D\otimes\hat\z$, we have $\hat
R=\prod R_\ell$ and $\hat D=\prod D_\ell$, where the product is taken over all the primes $\ell$.

If $\psi$ is any object of $\mathcal{C}_E$, and $\ell$ is any prime, then $I_{\psi,\ell}=R_\ell g_\ell$, for some $g_\ell\in R_\ell$;
observe that $g_\ell\in D_\ell^*$, since $I_{\psi,\ell}$ is a $\z_\ell$--lattice of $D_\ell$. The image of the adelic element
${(g_\ell)}_\ell\in\hat R\cap\hat D^*$ in the coset space $\hat R^*\backslash\hat R\cap\hat D^*$
depends only on $\psi$, and not on the choices of the $g_\ell$, and is denoted by $a_\psi$.

Vicerversa, if $a\in\hat R^*\backslash\hat R\cap\hat D^*$ is represented by ${(g_\ell)}_{\ell}\in \hat R\cap\hat D^*$,
then $R_\ell g_\ell=R_\ell$ for almost all $\ell$, and there is a unique left ideal $I_a$ of $R$ such that $I_{a,\ell}=I_a\otimes\z_\ell$
is equal to $R_\ell g_\ell$, for all $\ell$. The isogeny corresponding to $I_a$ will simply be denoted by $\psi_a:E\rightarrow E_{\psi_a}$,
it defines an element of $\mathcal{C}_E$.

If $\Sigma$ is the set of isomorphism classes of objects of $\mathcal{C}_{E}$, or of $\mathcal{C}_{-p}$, a
consequence of Theorem \ref{functor} is:

\begin{teo}\label{tau} The assignment $\psi\rightarrow a_\psi$ induces a correspondence
$$\tau:\Sigma\xrightarrow{\sim}\hat R^*\backslash \hat D^*/D^*.$$
\end{teo}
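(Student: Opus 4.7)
The plan is to combine the anti--equivalence of Theorem \ref{functor} with the classical adelic parametrization of the set of fractional left ideal classes of a maximal order. The central observation is that $\Sigma$ is identified, via $\mathcal{I}$, with the set of isomorphism classes of rank one projective left $R$--modules, and these in turn are classified by $\hat R^*\backslash\hat D^*/D^*$ because $R$ is a maximal order of $D$ (cf. subsection \ref{endring}).

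First I would reduce the statement to a calculation with fractional left $R$--ideals in $D$. By Theorem \ref{functor}, $\Sigma$ is in bijection with isomorphism classes of objects of $\mathcal{P}_R^{(1)}$. Every rank one projective left $R$--module is isomorphic to a nonzero fractional left $R$--ideal in $D$ (pick any $D$--linear isomorphism with $D$ after extending scalars and clear denominators), and conversely every such ideal is a rank one projective left $R$--module because $R$ is locally maximal. Two fractional left ideals $I$ and $J$ are isomorphic as left $R$--modules if and only if there exists $\lambda\in D^*$ with $J=I\lambda$: any left $R$--linear isomorphism $I\to J$ extends uniquely to a $\qu$--linear, and then left $D$--linear, endomorphism of $D$, and left $D$--linear endomorphisms of $D$ are right multiplications by elements of $D$. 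This is the same move already used in the proof of Theorem \ref{functor}.

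Next I would establish the adelic classification of fractional left $R$--ideals. Because $R_\ell$ is a maximal $\z_\ell$--order of $D_\ell$ for every prime $\ell$ (cf. Theorems \ref{tateendell} and \ref{tatep}), every fractional left $R$--ideal $I$ is locally principal: $I_\ell=R_\ell g_\ell$ for some $g_\ell\in D_\ell^*$, with $g_\ell\in R_\ell^*$ for almost all $\ell$ (cf. Remark \ref{locpri}). The assignment $I\mapsto$ class of $(g_\ell)$ in $\hat R^*\backslash\hat D^*$ is bijective: injectivity follows from the local--global principle $I=D\cap\hat I$ for lattices in $D$, and surjectivity from the fact that any $g\in\hat D^*$ has $g_\ell\in R_\ell^*$ for almost all $\ell$, so that $\hat R g\cap D$ is a fractional left $R$--ideal with the prescribed adelic completion. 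Under this bijection the right action of $D^*$ on fractional ideals given by $I\mapsto I\lambda$ corresponds to the usual right action of $D^*$ on $\hat R^*\backslash\hat D^*$, and quotienting yields the desired bijection $\Sigma\xrightarrow{\sim}\hat R^*\backslash\hat D^*/D^*$.

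Finally I would verify that the composite bijection agrees with $\tau$. By construction $a_\psi$ is the class of any adelic element $(g_\ell)$ with $R_\ell g_\ell=I_{\psi,\ell}$, which is precisely the class attached to $I_\psi$ under the adelic parametrization above, viewed modulo the right action of $D^*$. There is no serious obstacle here; the only subtlety is keeping straight that $D^*$ must act on the right, a consequence both of working with left modules and of the contravariance of $\mathcal{I}$, which converts an isomorphism $E_\psi\to E_\varphi$ of targets into right multiplication on the corresponding ideals.
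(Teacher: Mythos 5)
Your proposal is correct and follows essentially the same route as the paper: the theorem is deduced from the anti--equivalence of Theorem \ref{functor} together with the standard adelic parametrization of locally principal left ideals of the maximal order $R$ (local principality from Theorems \ref{tateendell} and \ref{tatep}, the local--global principle for lattices, and the identification of left $R$--module isomorphisms of ideals with right multiplication by elements of $D^*$). The only cosmetic difference is that you work with fractional ideals throughout, whereas the paper restricts to integral ideals and then invokes the bijection $\hat R^*\backslash\hat R\cap\hat D^*/D^*=\hat R^*\backslash\hat D^*/D^*$ for surjectivity; these are interchangeable.
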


The surjectivity of $\tau$ follows from the fact that the inclusion $\hat R\cap\hat D^*\subset \hat D^*$ induces a
bijection $\hat R^*\backslash \hat R\cap\hat D^*/D^*=\hat R^*\backslash\hat D^*/D^*$.

Let now $N\geq 1$ be a fixed integer not divisible by $p$. If $\psi:E\rightarrow E_\psi$ is an object of $\mathcal{C}_E$,
the tangent space $\mathbf{t}_0(E_\psi)$ of $E_\psi$ at the origin is a one--dimensional $k$--vector space, and
its dual $\mathbf{t}_0(E_\psi)^*$ is identified with the space of invariant $1$--form on $E_\psi$ (defined over $k$).
Consider the set $\Sigma(1,N)$ of isomorphism classes of triples $(\psi,\omega,x)$ given by an object
$\psi:E\rightarrow E_\psi$ of $\mathcal{C}_E$, a nonzero invariant $1$--form $\omega\in\mathbf{t}_0(E_\psi)^*$, and a
point $x\in E_\psi[N](\bar k)$ of order $N$. The notion of isomorphism between two such triples is clear:
$(\psi,\omega,x)$ is isomorphic to $(\varphi,\eta,y)$ if and only if there exists an isomorphism $u:E_\psi\xrightarrow{\sim} E_{\varphi}$
such that $u^*(\eta)=\omega$ and $u(x)=y$. Notice that $k^*$ acts on $\Sigma(1,N)$ via homotheties on the second
entry of the triples.

Choose now a nonzero $1$--form $\omega_0\in\mathbf{t}_0(E)^*$, and an element $x_0\in E[N](\bar k)$ of order $N$.
The ring $\hat R=\prod R_\ell$ acts on the left of the product
$$\mathbf{t}_0(E)^*\times\prod_{\ell\neq p}E[\ell^\infty]$$
by considering the natural action of $R_\ell$ on $E[\ell^\infty]$ for $\ell\neq p$, and on $\mathbf{t}_0(E)^*$ for $\ell=p$.
Notice that the action of $R_p$ on $\mathbf{t}_0(E)^*$ is given by a canonical ring homomorphism $R_p\rightarrow k$,
necessarily surjective because of the structure of $R_p$, that will be used to identify the residue field $R_p/\mathfrak{m}$
with $k$. The kernel of the corresponding character $R_p^*\rightarrow k^*$ is given by $R_p^*(1)$, the subgroup of units
congruent to $1$ modulo the maximal, two sided ideal $\mathfrak{m}$.

Let $K(1,N)$ be the open subgroup of $\hat R^*$ which stabilizes the pair $(\omega_0,x_0)$. We have a decomposition
$$K(1,N)=R_p^*(1)\times\prod_{\ell\neq p} K_\ell(N),$$
for a certain collection $K_\ell(N)$ of open subgroups of $R_\ell^*$, with $\ell\neq p$. The subgroup $R_p^*$ of $\hat R^*$
normalizes $K(1,N)$ and left translation induces an action of $R_p^*/R_p^*(1)=k^*$ on the coset space $K(1,N)\backslash\hat D^*/D^*$.

\begin{rem}\label{defineun} There is a $\z_\ell$--basis $(e_1,e_2)$ of $T_\ell(E)$, for $\ell\neq p$, such that the corresponding identification
$R_\ell^*\simeq\gl_2(\z_\ell)$ sends $K_\ell(N)$ to the subgroup
$${U_\ell(N)}^\gamma=\left\{x\in\textrm{GL}_2(\z_\ell)|x\equiv
\begin{pmatrix}
1&*\\
0&*\\
\end{pmatrix}\textrm{mod}\hphantom{x}\textrm{N}\right\},$$
which is the image of $U_\ell(N)$ (cf. section \ref{autom}) with respect to the canonical involution $\gamma$.
If $\ell^e$ is the exact power of $\ell$ dividing $N$, the basis above should be chosen in such a way
that the element of $T_\ell(E)/(N)=E[\ell^e](\bar k)$ defined by $e_1$ (mod $N$) is equal to the
$\ell$--primary component of $x_0$.
\end{rem}

\begin{teo}\label{corr2} There is a natural bijection
$$\tau_N:\Sigma(1,N)\xrightarrow{\sim}K(1,N)\backslash\hat D^*/D^*.$$
The permutation action of $k^*$ on $\Sigma(1,N)$ corresponds via $\tau_N$ to the action
induced by left translation of $R_p^*$ on $K(1,N)\backslash\hat D^*/D^*$.
\end{teo}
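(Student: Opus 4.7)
The plan is to promote the bijection $\tau$ of Theorem \ref{tau} by recording the extra data $(\omega,x)$. The key input is Remark \ref{locpri}: at every prime $\ell$, the ideal $I_{\psi,\ell}$ is a principal left $R_\ell$-module, and any generator $g_\ell\in D_\ell^*$ induces, via $\iota_\ell$ (respectively $\iota_p$), a distinguished isomorphism $T_\ell(E)\xrightarrow{\sim}T_\ell(E_\psi)$ for $\ell\neq p$ (namely the restriction of $V_\ell(\psi)\circ g_\ell$) and a dual isomorphism of Dieudonn\'e modules for $\ell=p$. In particular, at $p$ the choice of $g_p$ yields an isomorphism of one-dimensional $k$-vector spaces $\mathbf{t}_0(E)^*\simeq\mathbf{t}_0(E_\psi)^*$.

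First I would define $\tau_N$. Given $(\psi,\omega,x)\in\Sigma(1,N)$, for each prime $\ell$ I choose a generator $g_\ell\in D_\ell^*$ of $I_{\psi,\ell}$ so that the induced isomorphism carries the base data to $(\omega,x)$: for $\ell\nmid Np$ any generator works, for $\ell\mid N$ the isomorphism on Tate modules should send the $\ell$-primary component of $x_0$ to that of $x$ mod $N$ (using the basis of Remark \ref{defineun}), and for $\ell=p$ the induced map on invariant $1$-forms should send $\omega_0$ to $\omega$. Each constraint pins $g_\ell$ down modulo left multiplication by the stabilizer of the relevant datum, which is $K_\ell(N)$ for $\ell\neq p$ and $R_p^*(1)$ for $\ell=p$. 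Hence $(g_\ell)\in\hat D^*$ is well defined modulo left multiplication by $K(1,N)$. If $u:(\psi,\omega,x)\xrightarrow{\sim}(\varphi,\eta,y)$ is an isomorphism of triples, Theorem \ref{functor} identifies it with right multiplication on $D$ by an element of $D^*$, which translates any adelic generator for $\varphi$ into one for $\psi$, up to right $D^*$. This gives a well defined map $\tau_N:\Sigma(1,N)\to K(1,N)\backslash\hat D^*/D^*$.

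For bijectivity I would argue fiberwise over $\tau$. By construction, $\tau_N$ intertwines the forgetful map $\Sigma(1,N)\to\Sigma$ with the natural projection $K(1,N)\backslash\hat D^*/D^*\to\hat R^*\backslash\hat D^*/D^*$. The fiber of the former over $[\psi]$ is the set of pairs $(\omega,x)$ on $E_\psi$ modulo $\mathrm{Aut}_k(E_\psi)$; the fiber of the latter over $\tau(\psi)$ is $K(1,N)\backslash\hat R^*$ modulo the image of $\mathrm{Aut}_k(E_\psi)$ under $\tau$. The local identifications used to define $\tau_N$ yield a natural bijection between these fibers, equivariant for $\mathrm{Aut}_k(E_\psi)$, and bijectivity follows. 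The $k^*$-equivariance is then immediate from the definition: left multiplication of $(g_\ell)$ by a lift of $\lambda\in k^*=R_p^*/R_p^*(1)$ affects only the $p$-component and scales the induced map on $\mathbf{t}_0(E_\psi)^*$ by the residual character, which under $\tau_N$ corresponds to sending $(\psi,\omega,x)$ to $(\psi,\lambda\omega,x)$.

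The main technical subtlety I anticipate lies at the prime $p$. Since $\iota_p$ is an anti-isomorphism $R_p^{\mathrm{op}}\xrightarrow{\sim}\enn_{\mathcal{A}}(T_p(E))$ and the Dieudonn\'e functor is contravariant, one must be careful that the prescription pairing ``left multiplication by $R_p^*$'' with ``scaling of $\omega$'' is consistent with the canonical map $R_p\to k$ used to define $K(1,N)$. Remark \ref{untwist} indicates that the normalization at $p$ is flexible up to Frobenius, so any single coherent choice will produce the correspondence claimed in the theorem; the actual verification reduces to a local calculation with the matrix description of $R_p$ recalled in subsection \ref{endring}.
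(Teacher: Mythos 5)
Your proposal is correct and follows essentially the same route as the paper: both rest on the local principality of $I_{\psi,\ell}$ (Remark \ref{locpri}), Tate's theorems via Theorems \ref{tau} and \ref{functor} for the underlying isogeny, and the identification $T_p(X)/FT_p(X)={\mathbf{t}_0(X)}^*$ to handle the datum $\omega$ at $p$; the only difference is that you build $\tau_N$ from triples to cosets by normalizing the generators $g_\ell$ against the base point $(\omega_0,x_0)$, whereas the paper constructs the inverse map from a coset $a$ to a triple $(\psi_a,\omega_a,x_a)$. Like the paper, you defer the final sign/Frobenius normalization at $p$ and the full injectivity check to a local computation, so the level of rigor is comparable.
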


\begin{proof} Let $a\in K(1,N)\backslash\hat R\cap\hat D^*$ be any element, we begin by showing
how to construct a triple $(\psi_a,\omega_a,x_a)$ of the type considered out of $a$. Let $I_a$
be the left ideal of $R$ defined by the adelic coset $\bar a\in\hat R^*\backslash\hat R\cap\hat D^*$
deduced from $a$. The isogeny $\psi_a:E\rightarrow E_{\psi_a}$ is that obtained from the $I_a$
using the construction (\ref{idealisogeny}) from section \ref{subiso}.

Let $g={(g_\ell)}_{\ell}\in\hat R\cap\hat D^*$ be any element representing $a$, i.e., such that $a=K(1,N)g$.
Recall that for $\ell\neq p$ the module $T_\ell(E_{\psi_a})$ is identified with a $\z_\ell$--lattice
$\Lambda_{\psi_{a,\ell}}$ of $V_\ell(E)$ containing $T_\ell(E)$, moreover $g_\ell\in D_\ell^*$ is a
generator of the principal, $R_\ell$--left ideal $\ho_{\z_\ell}(\Lambda_{\psi_{a,\ell}},T_\ell(E))$,
and defines an isomorphism of $V_\ell(E)$ that maps $\Lambda_{\psi_{a,\ell}}$ isomorphically to
$T_\ell(E)$. Therefore we deduce, for any $n\geq 0$, an identification
\begin{equation}\label{tatemodn}E_\psi[\ell^n](\bar k)=\Lambda_{\psi_{a,\ell}}/(\ell^n)\simeq T_\ell(E)/(\ell^n)=E[\ell^n](\bar k)
\end{equation}
depending on the choice of the generator $g_\ell$ of $I_{\psi,\ell}$.


If now $\ell^e$ is the exact power of $\ell$ dividing $N$, then the $\ell$--th primary component
$x_{0,\ell}$ of $x_0$ is a point of order $\ell^e$ in $T_\ell(E)/(\ell^e)$. The inverse image
$x_{a,\ell}\in\Lambda_{\psi_{a,\ell}}/(\ell^e)$ of $x_{0,\ell}$ by the identification (\ref{tatemodn})
for $n=e$ is a point $E_{\psi_a}[\ell^e](\bar k)$ of order $\ell^e$ that depends only on the coset
$K_\ell(N)g_\ell$, as one can easily check. The point $x_a\in E_{\psi_a}[N](\bar k)$ is the one
whose $\ell$--th primary component is $x_{a,\ell}$, and depends only on $a$, and not on $g$.


To construct the invariant $k$--form $\omega_a$, we use the fact that for any object $X$
of $\mathcal{C}_{-p}$ there is a canonical identification of $k$--vector spaces (cf. \cite{Fo},
II, Prop. $4.3$)
\begin{equation}\label{cotan}T_p(X)/FT_p(X)={\mathbf{t}_0(X)}^*.
\end{equation}
Recall that the Dieudonn\'e module $T_p(E_{\psi_a})$ is identified, via $\psi_{a,p}$, to a
$\mathcal{A}$--submodule $M_{\psi_{a,p}}$ of $T_p(E)$, moreover the component $g_p\in D_p^*$ at $p$ of $g$
defines a generator $\iota_p(g_p)$ of $\ho_\mathcal{A}(T_p(E),M_{\psi_{a,p}})$ as a right ideal of
$\enn_\mathcal{A}(T_p(E))$. 
%

We have a commutative diagram of morphisms of $\mathcal{A}$--modules
$$\xymatrix{T_p(E)&\ar@{_(->}[l] M_{\psi_{a,p}}&T_p(E)\ar[l]_{\sim}\ar@(ur,ul)[ll]_{\iota_p(g_p)}\ar[dl]_{\sim}\\
&T_p(E_{\psi_a}).\ar[lu]^{\psi_{a,p}}\ar[u]^\sim&\\}$$
The diagonal isomorphism to the right of the diagram is induced by the composition of $\iota_p(g_p)$ corestricted to $M_{\psi_{a,p}}$,
followed by the inverse of the corestriction of $\psi_{p,a}$ to the same lattice of $V_p(E)$. Notice that it is not
independent on the choice of $g_p$. From this isomorphism, using (\ref{cotan}), we can deduce an isomorphism
of $k$--vector spaces
$$\xi_a:{\mathbf{t}_0(E)}^*\xrightarrow{\sim}{\mathbf{t}_0(E_{\psi_a})}^*$$
that depend only on the coset $R_p^*(1)g_p$, and not on the choice of $g_p$, as one can easily check.
Setting $\omega_a$ to be equal to $\xi_a(\omega_0)$ completes the construction of the triple
$(\psi_a,\omega_a,x_a)$ attached to $a=K(1,N)g\in K(1,N)\backslash\hat R\cap\hat D^*$.

We are left with showing that, for $a,$ $b\in K(1,N)\backslash\hat R\cap\hat D^*$, the triples
$(\psi_a,\omega_a,x_a)$ and $(\psi_b,\omega_b,x_b)$ are isomorphic if and only if $a=b\lambda$,
for some $\lambda\in D^*$. One way to proceed is to observe that in order for the two triples to be isomorphic
certainly we must have that there exists an isomorphism $u:E_{\psi_a}\rightarrow E_{\psi_b}$, equivalently, by Proposition
\ref{tau}, if $\bar a,$ $\bar b$ denote the elements of $\hat R^*\backslash\hat R\cap\hat D^*$ deduced from, respectively, $a$
and $b$, then there must be $\lambda\in D^*$ such that $\bar a=\bar b\lambda$. We leave to the reader the task of
showing, with the help of section \ref{subiso}, that the isomorphism $u$ induces an isomorphism between
$(\psi_a,\omega_a,x_a)$ and $(\psi_b,\omega_b,x_b)$ if and only if $a=b\lambda$.
\end{proof}

\begin{rem} If $d$ is an integer $\geq 1$, the natural map
$K(1,Nd)\backslash\hat D^*/D^*\rightarrow K(1,N)\backslash\hat D^*/D^*$ corresponds, under the identifications
$\tau_{Nd}$ and $\tau_N$ to the map sending a given triple $(\psi,\omega, x)\in\Sigma(1,Nd)$ to $(\psi,\omega, dx)\in\Sigma(1,N)$.
\end{rem}

\begin{rem}\label{dbi} Once for each prime $\ell\neq p$ a basis for $T_\ell(E)$ as in Remark \ref{defineun} is chosen, we have that
the canonical involution $\gamma$ of $D$ defines a bijection
$$K(1,N)\backslash\hat D^*/D^*\xrightarrow{\sim}\Omega(1,N),$$
where $\Omega(1,N)$ is the double coset space introduced in section \ref{autom}.  The effect of the main involution of
$D_p$ on the multiplicative group of the residue field is the Frobenus automorphism. This explains the occurrence of the twist for
the action of $k^*$ as given in Theorem \ref{corr3}, which is just a restatement of Theorem \ref{corr2}.
\end{rem}

\subsection{Hecke operators}\label{operiamo} For an integer $N\geq 1$ not divisible by $p$, the space $S(1,N)$
defined in section \ref{autom} is identified, using Remark \ref{dbi} following Theorem \ref{corr3}, with
the space of complex valued functions on $\Sigma(1,N)$.
Our task is now to investigate a few basic properties of the Hecke operators on $S(1,N)$.

\begin{defi} Let $\ell$ be a prime $\neq p$. The $\ell$--th Hecke operator $T_\ell$ on $S(1,N)$ is defined
as
$$T_\ell (f)(\psi,\omega,x)=\sum_{\varphi}f((\psi',\omega',x')),$$
where the sum ranges through all the degree $\ell$ isogenies $\varphi:E_\psi\rightarrow E_{\psi'}$
such that $\varphi(x)$ has order $N$ in $E_\psi'[N]$, and where remaining entries of the triple $(\psi',\omega',x')$ are
defined by $\varphi^*(\omega')=\omega$, $\varphi(x)=x'$.
\end{defi}

\begin{prop} If $\ell\nmid pN$, then the Hecke operator $T_\ell$ acting on $S(1,N)$ is semi--simple.
\end{prop}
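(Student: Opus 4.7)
The plan is to exhibit $T_\ell$ (for $\ell\nmid pN$) as a normal operator on $S(1,N)$ equipped with a natural Hermitian inner product, and to deduce semisimplicity from the finite-dimensional spectral theorem. Using Theorem \ref{corr3} and Remark \ref{dbi}, I identify $S(1,N)$ with the space of complex functions on the finite set $\Sigma(1,N)$ of isomorphism classes of triples $\alpha=[(E,\omega,x)]$. For each $\alpha$, let $w_\alpha$ denote the order of the (finite) automorphism group of a representative triple, which by Lemma \ref{wek} is a subgroup of $k^*$ via the action on cotangent vectors. Let $(e_\alpha)_{\alpha\in\Sigma(1,N)}$ be the characteristic-function basis, and equip $S(1,N)$ with the Hermitian pairing
$$\langle e_\alpha,e_\beta\rangle=\delta_{\alpha\beta}/w_\alpha.$$

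Next, I would unwind the matrix coefficients of $T_\ell$ in moduli terms, using the description just before this lemma: for $\ell\nmid pN$, $\langle T_\ell e_\alpha,e_\beta\rangle$ equals $1/w_\beta$ times the number of degree-$\ell$ isogenies $\varphi\colon E_\alpha\to E_\beta$ with $\varphi(x_\alpha)=x_\beta$ and $\varphi^*\omega_\beta=\omega_\alpha$. The essential step is then to build an involution on the contributing isogenies via the dual: for a degree-$\ell$ isogeny $\varphi\colon E_\alpha\to E_\beta$ one has $\hat\varphi\colon E_\beta\to E_\alpha$ of degree $\ell$ with $\hat\varphi\circ\varphi=[\ell]_{E_\alpha}$ and $\varphi^*\hat\varphi^*=\ell\cdot\mathrm{id}$ on cotangent spaces. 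Hence $\hat\varphi$ contributes to an entry $\langle T_\ell e_{\beta'},e_{\alpha'}\rangle$ where $\alpha'$ is obtained from $\alpha$ by replacing $x_\alpha$ by $\ell\cdot x_\alpha$ and $\omega_\alpha$ by $\ell^{-1}\omega_\alpha$ (and similarly for $\beta'$). Because $\ell\nmid pN$, multiplication by $\ell$ on $E[N]$ is a bijection and on $\mathbf{t}_0^*$ lies in $k^*$, so this rescaling is exactly a diamond operator—call it $\langle \ell\rangle\in\mathrm{End}(S(1,N))$ deduced from the action of $U'(1,N)/U(1,N)=(\mathbf{Z}/N\mathbf{Z})^*\times D_p^*/R_p^*(1)$ described in section~\ref{autom}.

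The dual-isogeny bijection, together with the weight identity that $w_\alpha=w_{\alpha'}$ (the rescaling by $\ell$ is an isomorphism of triples' automorphism groups), then yields the adjoint relation
$$T_\ell^{\,*}=\langle\ell\rangle\circ T_\ell.$$
Since $\langle\ell\rangle$ is a unitary operator (a permutation of the orthogonal basis $(e_\alpha)$ that preserves the weights $w_\alpha$) and since diamond operators commute with the Hecke operator $T_\ell$ (the coset decomposition (\ref{cosetdec}) is invariant under normalizer translations), one gets
$$T_\ell\,T_\ell^{\,*}=T_\ell\langle\ell\rangle T_\ell=\langle\ell\rangle T_\ell^{2}=T_\ell^{\,*}T_\ell,$$
so $T_\ell$ is normal and hence diagonalizable.

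The main obstacle I expect is the bookkeeping in the dual-isogeny step: verifying both that the level-structure condition $\varphi(x_\alpha)=x_\beta$ transforms cleanly under $\varphi\mapsto\hat\varphi$ (this is where $\ell\nmid N$ is essential, guaranteeing that $[\ell]$ acts invertibly on $E[N]$) and that the differential-form condition transforms by the expected factor of $\ell^{-1}$ on $\mathbf{t}_0^*$ (which uses that $\ell\neq p$ so that $\varphi$ is étale and $\varphi^*$ is an isomorphism of cotangent spaces). Once the twist is correctly identified as a diamond operator in the sense of section~\ref{autom}, the normality identity and the conclusion are immediate.
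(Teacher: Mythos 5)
Your argument is essentially the paper's own proof: the paper also identifies $S(1,N)$ with functions on the finite set $\Sigma(1,N)$, takes the inner product making the characteristic functions orthonormal, and invokes the dual isogeny to show $T_\ell$ commutes with its adjoint (citing Gross, Prop.\ 2.7, for the details you spell out). Note that by Lemma \ref{wek} every automorphism acts faithfully on $\omega$, so all your weights $w_\alpha$ equal $1$ and your Hermitian pairing coincides with the paper's unweighted one.
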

\begin{proof}\label{heckeisss} Consider the real inner product on $S(1,N)$ for which the basis given by the characteristic functions
of the singletons of $\Sigma(1,N)$ is orthonormal. A standard application of the existence of the dual isogeny shows that
$T_\ell$, for $\ell\nmid pN$ commutes with its adjoint and is therefore semi--simple (cf. \cite{Gr}, Prop. 2.7).
\end{proof}

For $\ell\neq p$, consider the decomposition into right cosets of the double coset
\begin{equation}U_\ell(N)^\iota\begin{pmatrix}
1&0\\
0&\ell\\
\end{pmatrix}U_\ell(N)^\iota=\bigsqcup_{i}U_\ell(N)^\iota\alpha_i.
\end{equation}
The number of left cosets in the above decomposition is $\ell+1$ if $\ell\nmid pN$, and $\ell$ if $\ell| N$.
Using the identification
$$\Sigma(1,N)=\left(R_p^*(1)\times\prod_{\ell\neq p}U_\ell(N)^\iota\right)\backslash\hat D^*/D^*,$$
one can show that the operator $T_\ell$ is described by:

\begin{prop} For every $\ell\neq p$ we have
$$T_\ell(f)(x)=\sum_{i}f(\alpha_i x).$$
\end{prop}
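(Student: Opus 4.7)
The strategy is to push the moduli--theoretic operator through the correspondence $\tau_N$ of Theorem~\ref{corr2}, realize each summand as left multiplication by an adelic element supported at $\ell$, and then identify the admissible local elements with the stated double--coset decomposition.

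First I would fix $(\psi,\omega,x)\in\Sigma(1,N)$, corresponding to $a=K(1,N)g\in K(1,N)\backslash\hat D^*/D^*$ with $g=(g_{\ell'})_{\ell'}$. A degree--$\ell$ isogeny $\varphi:E_\psi\to E_{\psi'}$ corresponds, under the anti--equivalence of Theorem~\ref{functor} localized at $\ell$, to an index--$\ell$ overlattice $\Lambda'\supset\Lambda_{\psi,\ell}$ in $V_\ell(E)$; equivalently (cf.\ the proof of Theorem~\ref{teo1}) to the replacement of the generator $g_\ell$ of $I_{\psi,\ell}$ by a generator $g''_\ell=h_\ell g_\ell$ of $I_{\varphi\circ\psi,\ell}$, where $h_\ell\in R_\ell\cap D_\ell^*$ has reduced norm of valuation one. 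At all places $\ell'\ne\ell$ the isogeny $\varphi$ induces an isomorphism on Tate modules (and on $p$--Dieudonn\'e modules), so the entries $x'$ and $\omega'$ of the new triple are determined by the adelic datum that agrees with $g$ outside $\ell$. Thus the new triple is represented by $hg$ where $h$ equals $h_\ell$ at $\ell$ and is trivial elsewhere.

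Second I would identify the admissible $h_\ell$. Since $\ell\neq p$, the Smith normal form places every candidate $h_\ell$ in $\gl_2(\z_\ell)\bigl(\begin{smallmatrix}1&0\\0&\ell\end{smallmatrix}\bigr)\gl_2(\z_\ell)$. The condition that $\varphi(x)$ have order $N$ in $E_{\psi'}[N]$ is vacuous when $\ell\nmid N$ and, when $\ell\mid N$, forbids $\ker\varphi$ from meeting the $\ell$--primary part of $\langle x\rangle$. In the basis fixed in Remark~\ref{defineun}, where $e_1\bmod N$ represents the $\ell$--primary component of $x_0$, this condition translates exactly into $h_\ell\in U_\ell(N)^\iota\bigl(\begin{smallmatrix}1&0\\0&\ell\end{smallmatrix}\bigr)U_\ell(N)^\iota$; moreover distinct isogenies yield distinct left $K_\ell(N)=U_\ell(N)^\iota$--cosets of $h_\ell$, which can be checked by comparing kernels (equivalently, the images $h_\ell T_\ell(E)\subset T_\ell(E)$). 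This sets up a bijection between the isogenies $\varphi$ summed over in the definition of $T_\ell(f)(\psi,\omega,x)$ and the coset representatives $\alpha_i$ in $U_\ell(N)^\iota\bigl(\begin{smallmatrix}1&0\\0&\ell\end{smallmatrix}\bigr)U_\ell(N)^\iota=\bigsqcup_i U_\ell(N)^\iota\alpha_i$, and in particular reproduces the expected count ($\ell+1$ if $\ell\nmid N$, and $\ell$ if $\ell\mid N$).

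Combining the two steps, the triple $(\psi',\omega',x')$ is represented by the adelic element $\alpha_i g$ (for the unique $i$ with $K_\ell(N)h_\ell=K_\ell(N)\alpha_i$), so $T_\ell(f)(x)=\sum_i f(\alpha_i x)$ as claimed. The most delicate point is the second step: the matching between the condition ``$\varphi(x)$ has order $N$'' and the double coset with diagonal $(1,\ell)$ rather than $(\ell,1)$ is exactly what Remark~\ref{defineun} was arranged to produce, and also reflects the involution $\gamma$ relating the moduli side $K(1,N)\backslash\hat D^*/D^*$ to the automorphic side $\Omega(1,N)$ in Remark~\ref{dbi}.
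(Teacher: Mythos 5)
Your argument is correct, and it supplies a proof where the paper offers none (the proposition is introduced only by ``one can show that the operator $T_\ell$ is described by'' with no further justification); the route you take --- localizing the ideal-theoretic correspondence at $\ell$, writing the generator of $I_{\varphi\psi,\ell}$ as $h_\ell g_\ell$ with $h_\ell$ integral of reduced-norm valuation one, and matching the condition that $\varphi(x)$ keep order $N$ against the normalization of the basis in Remark \ref{defineun} --- is exactly the intended one. One small slip in a parenthetical: the kernel of $\varphi$ is read off from the overlattice $h_\ell^{-1}T_\ell(E)\supset T_\ell(E)$, which is what is invariant under left multiplication of $h_\ell$ by $K_\ell(N)$, rather than from the image $h_\ell T_\ell(E)$ (a right-coset invariant); this does not affect the argument.
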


After applying the main involution, we now see that the definition of $T_\ell$ given in section \ref{autom} corresponds to
that given in this section under the bijection of Remark \ref{dbi}.

\end{document}